\theoremstyle{plain}
\newtheorem{thm}{Theorem}[section]
\newtheorem{theorem}[thm]{Theorem}
\newtheorem{lemma}[thm]{Lemma}
\newtheorem*{proper*}{Property}
\newtheorem{proposition}[thm]{Proposition}
\newtheorem{corollary}[thm]{Corollary}
\newtheorem{question}[thm]{Question}
\newtheorem*{lm*}{Lemma}
\newtheorem*{thm*}{Theorem}
\theoremstyle{definition}
\newtheorem{definition}[thm]{Definition}
\newtheorem*{df*}{Definition}
\newtheorem{example}[thm]{Example}
\newtheorem{ex-notn}[thm]{Example/Notation}
\newtheorem{observation}[thm]{Observation}
\theoremstyle{remark}
\newtheorem{remark}[thm]{Remark}
\newtheorem*{acknowledgement*}{Acknowledgement}
\newtheorem*{ex*}{Example}
\newtheorem*{exer*}{Exercise}
\newtheorem*{rem*}{Remark}
\newtheorem*{prob*}{Problem}
\newtheorem*{prop*}{Proposition}
\def\link{\operatorname{link}}
\def\Mon{\operatorname{Mon}} 
\def\Supp{\operatorname{Supp}}
\def\supp{\operatorname{supp}}
\def\frakm{\mathfrak{m}}
\def\KK{{\mathbb K}}
\def\NN{{\mathbb N}}
\def\ZZ{{\mathbb Z}}
\def\calF{\mathcal{F}}
\def\calP{\mathcal{P}}
\def\bda{{\bm a}}
\def\bdb{{\bm b}}
\def\bde{{\bm e}}
\def\bdg{{\bm g}}
\def\bdx{{\bm x}}
\def\deg{\operatorname{deg}}
\def\divides{{\mid}}
\def\Index#1{\emph{#1}}
\def\isom{\cong}
\def\mySet#1{\left\{ #1\right \}}
\def\th{\text{th}}
\DeclareMathOperator{\std}{std }
\DeclareMathOperator{\reg}{reg}
\def\opn#1#2{\def#1{\operatorname{#2}}} 
\opn\rev{rev}
\opn\Lex{Lex}
\opn\GL{GL}
\opn\initial{in}
\DeclareMathOperator{\suppreg}{supp.reg}
\def\AD{\mathscr{A}}
\DeclareMathOperator{\suppdeg}{supp.deg}
\title{Monomial ideals with linear quotients and componentwise (support-)linearity}
\author{Yi-Huang Shen}
\address{Wu Wen-Tsun Key Laboratory of Mathematics of CAS and School of Mathematical Sciences, University of Science and Technology of China, Hefei, Anhui, 230026, People's Republic of China}
\thanks{This work is supported by the National Natural Science Foundation of China (11201445).}
\email{yhshen@ustc.edu.cn}
\subjclass[2010]{
    05E40, 
    05E45, 
    13C14, 
    13C13. 
}
\keywords{Linear quotients; Weakly polymatroidal ideal; Weakly stable ideal; Vertex decomposability; Shellability}
\date{}
\begin{document}
\maketitle

\begin{abstract}
    When a monomial ideal has linear quotients with respect to an admissible order of increasing support-degree, we provide two proofs of different flavors to show that it is componentwise support-linear. We also introduce the variable decomposable monomial ideals. In squarefree case, these ideals correspond to the vertex decomposable simplicial complexes. We study the relationships of the variable decomposable ideals with weakly polymatroidal ideals, weakly stable ideals and ideals with linear quotients. We also investigate the componentwise properties of all these ideals.
\end{abstract}

\section{Introduction}
Throughout this paper, $S=\KK[x_1,\dots,x_n]$ is the polynomial ring in $n$ variables over the field $\KK$.  Let $I$ be a monomial ideal in $S$ with minimal monomial generating set $G(I)$.  We say that $I$ has \Index{linear quotients}, if there exists an order $\sigma=u_1,\dots,u_m$ of $G(I)$ such that each colon ideal $\braket{u_1,\dots,u_{i-1}}:u_i$ is generated by a subset of the variables for $i=2,3,\dots,m$. Any order of these generators for which we have linear quotients will be called an \Index{admissible order}.

The concept of ideals with linear quotients was introduced by Herzog and Takayama \cite{MR1918513}. Basic properties of this class of ideals were studied by Soleyman Jahan and Zheng \cite{MR2557882}. In particular, they showed that monomial ideals with linear quotients are componentwise linear.

A related concept is called \Index{componentwise support-linearity}. Through the Alexander duality introduced by Miller in \cite{MR1779598}, Sabzrou \cite{MR2561856} showed that modules with componentwise support-linearity corresponds to sequentially Cohen-Macaulay modules.

Soleyman Jahan and Zheng \cite{MR2557882} noticed that through Alexander duality, squarefree monomial ideals with linear quotients corresponds to (nonpure) shellable simplicial complexes. The Stanley-Reisner ideal of a shellable simplicial complex is always sequentially Cohen-Macaulay, cf.~\cite[page 97]{MR1453579}. This observation leads us to ask: when will a monomial ideal with linear quotients have componentwise support-linearity?

A very important property for monomial ideals with linear quotients was observed and applied extensively in Soleyman Jahan and Zheng's paper: these ideals always have a degree-increasing admissible order. Therefore, in Theorem \ref{lq-sdi} we provide a natural answer for the above question: monomial ideals with linear quotients with respect to a support-degree increasing admissible order must have componentwise support-linearity.

We provide two proofs for this result. The first approach depends on Popscu's treatment for the shellable multicomplexes in \cite{MR2338716}. This approach is easier and more direct. It provides a nice application of Miller's Alexander duality theory.

The second approach follows the strategy of \cite{MR2557882}. Therefore, we are able to manipulate the minute structures of these ideals. In particular, we can provide a slightly different proof for \cite[Corollary 2.12]{MR2557882}, which says that the facet skeletons of shellable complexes are again shellable.

The remaining part of this paper is devoted to special classes of ideals with linear quotients. Let $\Delta$ be a simplicial complex. Following \cite{MR1704158}, the Alexander dual complex $\Delta^\vee$ will be called the \Index{Eagon complex} of the Stanley-Reisner ideal $I_{\Delta}$.  Soleyman Jahan and Zheng \cite{MR2557882} observed that the Eagon complexes of squarefree monomial ideals with linear quotients are (nonpure) shellable.  Meanwhile, the Eagon complexes of squarefree weakly stable ideals \cite{MR1704158} and squarefree weakly polymatroidal ideals \cite{MR2845598} are all vertex decomposable, and vertex decomposable complexes are shellable \cite{MR1401765}.

On the other hand, on the ideal-theoretic side, even for monomial ideals which are not necessarily squarefree, we know that both weakly stable ideals and weakly polymatroidal ideals have linear quotients. Observing the correspondence and the similar implications, we have a natural question: what is the missing part that corresponds to the vertex decomposable complexes?

As the answer, we introduce the concept of \Index{variable decomposable} monomial ideals. We will show that this class fills perfectly the gap. Meanwhile, we will study the related componentwise properties of all these three types of ideals: weakly stable ideals, weakly polymatroidal ideals and our variable decomposable ideals.

\section{Preliminaries}
$\NN=\Set{0,1,2,\dots}$.
For each vector $\bda=(\bda(1),\dots,\bda(n))\in \NN^n$, we write $\bdx^\bda$ for the monomial $\prod_{i=1}^n x_i^{\bda(i)}$ in the polynomial ring $S=\KK[x_1,\dots,x_n]$. We will also write $\frakm^\bda$ for the ideal $\braket{x_i^{\bda(i)} :\bda(i)\ge 1}$. The ideals $\braket{0}$ and $\braket{1}$ will be treated as trivial monomial ideals. Following the convention, we also write $[n]$ for the set $\Set{1,2,\dots,n}$.

If $\bda\in\ZZ^n$ is a vector, the \Index{support} of $\bda$ is the set $\supp(\bda):=\Set{i\in[n] : \bda(i)\ne 0}$.  Meanwhile, if $M$ is a $\ZZ^n$-graded module and $m\in M_{\bda}$, the \Index{support} of $m$ is $\supp(m):=\supp(\bda)$ and its \Index{support-degree} is $\suppdeg(m):=|\supp(\bda)|$.

\subsection{Wedge product structure for polynomial rings}
If $i_1,i_2,\dots,i_k$ is a sequence of different integers, the pair $(r,s)$ is an \Index{inversion} with respect to this sequence precisely when $r<s$ and $i_r>i_s$.

We impose the wedge product $\wedge$ on the monomial set $\Mon(S)$ of $S$ as follows. Suppose $m_1=\bdx^{\bda}$ and $m_2=\bdx^{\bdb}$ are two monomials such that $\supp(m_1)=\Set{i_1<\cdots<i_{k}}$ and $\supp(m_2)=\Set{i_{k+1}<\cdots<i_{k+l}}$. If $\supp(m_1)\cap \supp(m_2)\ne \emptyset$, let $m_1\wedge m_2=0$. Otherwise, let $m_1\wedge m_2=(-1)^{\epsilon} m_1\cdot m_2$, where $m_1\cdot m_2$ is the usual product in $S$ and $\epsilon$ is the number of inversions in the sequence $i_1,\dots,i_{k+l}$. Now extend this binary operation $\KK$-linearly to elements in $S$.

The polynomial ring $S$ equipped with this wedge product is an associative algebra. We will write it as $(S,\wedge)$. It is generated, as a $\KK$-algebra, by elements of the form $x_i^k$ with $k\ge 1$. In particular, it is not finitely generated. Obviously, $(S,\wedge)$ contains the standard exterior algebra $\bigwedge^1(S_1)$ as a subalgebra.

Suppose that $I$ and $J$ are two monomial $S$-ideals, where the multiplicative structure is with respect to the usual product in $S$. Let $I\wedge J$ be the $\KK$-vector space spanned by
\[
    \Set{f\wedge g : f\in \Mon(I) \text{ and }g\in \Mon(J)}.
\]

\begin{lemma}
    $I\wedge J$ is the $S$-ideal generated by $f\wedge g$ with $f\in G(I)$ and $g\in G(J)$.
\end{lemma}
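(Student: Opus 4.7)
The plan is as follows. Because the wedge product of two monomials is by definition either zero or $\pm$ their usual product, the $\KK$-vector space $I\wedge J$ coincides with the $\KK$-span of the monomial set
\[
    T = \{\, fg : f\in \Mon(I),\ g\in \Mon(J),\ \supp(f)\cap\supp(g)=\emptyset\,\}.
\]
So the proof reduces to showing two things: (i) $\Span_\KK T$ is an $S$-ideal, and (ii) it is generated, as an $S$-ideal, by the elements $f_0\wedge g_0$ with $f_0\in G(I)$ and $g_0\in G(J)$.

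For (i), I would check directly that $T$ is closed under multiplication by a variable $x_i$. Suppose $m=fg\in T$. If $i\in\supp(f)$, I would keep $g$ and replace $f$ by $x_i f\in\Mon(I)$, noting that $\supp(x_i f)=\supp(f)$ is still disjoint from $\supp(g)$. The case $i\in\supp(g)$ is symmetric. If $i$ lies in neither support, I can place $x_i$ on the $f$-side; then $\supp(x_i f)=\supp(f)\cup\{i\}$ is still disjoint from $\supp(g)$. In all cases $x_i m\in T$, so $\Span_\KK T$ is indeed an $S$-ideal.

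For (ii), take any $f\in\Mon(I)$ and $g\in\Mon(J)$ with $f\wedge g\ne 0$. I would write $f=m_1 f_0$ and $g=m_2 g_0$ for some $f_0\in G(I)$, $g_0\in G(J)$ and monomials $m_1,m_2\in S$. Because $\supp(f_0)\subseteq\supp(f)$ and $\supp(g_0)\subseteq\supp(g)$, the supports of $f_0$ and $g_0$ are also disjoint, so $f_0\wedge g_0\ne 0$ and equals $\pm f_0 g_0$. Comparing the two wedge products to their underlying polynomial products in $S$, one finds $f\wedge g=\pm m_1 m_2\,(f_0\wedge g_0)$, which lies in the $S$-ideal generated by the $f_0\wedge g_0$. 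Combined with (i) and the obvious reverse inclusion (each $f_0\wedge g_0$ belongs to $I\wedge J$ by definition), this gives the claim.

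I do not foresee any serious obstacle. The only mild subtlety is the sign bookkeeping produced by counting inversions in the definition of $\wedge$, but since everything happens up to $\pm 1$, the signs are absorbed into scalar multiples and play no role in the ideal-theoretic conclusion.
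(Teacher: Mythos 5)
Your proposal is correct and follows essentially the same two-step strategy as the paper. Part (ii) mirrors the paper's forward direction (write $f=m_1f_0$, $g=m_2g_0$ and pull out $\pm m_1m_2$), while part (i) is simply a one-variable-at-a-time version of the paper's converse step, where the paper instead splits an arbitrary monomial $h$ into $h_1h_2$ with $\supp(h_2)\subset\supp(g)$ and $\supp(h_1)\cap\supp(g)=\emptyset$ in a single stroke.
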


\begin{proof}
    Suppose $f\in \Mon(I)$ and $g\in\Mon(J)$. Then $f=f_1f_2$ with some $f_1\in G(I)$ and $g=g_1g_2$ with some $g_1\in G(I)$. If $f\wedge g\ne 0$, $\supp(f)\cap \supp(g)=\emptyset$. Hence $\supp(f_1)\cap\supp(g_1)=\emptyset$. Now $f\wedge g=\pm fg=\pm (f_2g_2)(f_1\wedge g_1)$.

    Conversely, suppose $f\in G(I)$ and $g\in G(J)$ with $\supp(f)\cap \supp(g)=\emptyset$. Take arbitrary $h\in S$, we can write $h=h_1h_2$ with $\supp(h_2)\subset \supp(g)$ and $\supp(h_1)\cap \supp(g)=\emptyset$. Now $h(f\wedge g)=\pm hfg=\pm (h_1f)(h_2g)=\pm (h_1f)\wedge (h_2g)$. Notice that $h_1f\in I$ and $h_2g\in J$.
\end{proof}

Though not directly related to the current paper, it is worth mentioning that there is a special $\KK$-linear operation $\partial$ defined on $(S,\wedge)$.  For each $f\in S$, we will have $\partial(f)=f\wedge \sigma$ where $\sigma=\sum_{k=1}^n (-1)^{k-1}x_k$. As $\sigma \wedge \sigma=0$, $\partial\circ\partial=0$. Notice that the exterior algebra $\bigwedge^1(S_1)$ with $\partial$ is closely related the classical Koszul complexes, which turned out to be very useful for commutative algebra. From $(S,\wedge)$ we can similarly construct an infinite complex that generalizes the classical construction.

\subsection{Simplicial complexes and facet ideals}

Let $\Delta$ be a simplicial complex whose vertex set is contained in $[n]$. According to \cite[Definition 2.8]{MR1333388}, for $0\le r \le s \le \dim(\Delta)$,  the \Index{skeleton} $\Delta^{(r,s)}$ is the subcomplex
\[
    \Delta^{(r,s)}=\Set{A\in \Delta : \dim(A)\le s \text{ and $A\subset F$ for some facet $F$ with $\dim(F)\ge r$}}.
\]
When $r=0$, we get the classic $s$-skeletons. When $r=s$, we get the pure $s$-skeletons.

In addition,  Soleyman Jahan and Zheng introduced the \Index{facet skeletons} of $\Delta$ as follows. To start with, the \Index{$1$-facet skeleton} of $\Delta$ is the simplicial complex
\[
    \Delta^{[1]}=\Braket{G : G\subset F \in \calF(\Delta)\text{ and }|G|=|F|-1}.
\]
Now, recursively, the \Index{$i$-facet skeleton} is defined to be the $1$-facet skeleton of $\Delta^{[i-1]}$.

For each $F\subset [n]$, we set $F^c=[n]\setminus F$. Let $\Delta^c=\braket{F^c : F\in \calF(\Delta)}$. The facet ideal of $\Delta^c$ is
\[
    I(\Delta^c)=\Braket{\bdx^{F^c} : F\in \calF(\Delta)}=\Braket{\prod_{j\notin F}x_j : F\in\calF(\Delta)}.
\]
By \cite[Lemma 1.2]{MR2083448}, the Stanley-Reisner ideal $I_{\Delta^\vee}$ of the Alexander dual complex $\Delta^\vee$ coincides with the above facet ideal:
\begin{equation}
    I_{\Delta^\vee}=I(\Delta^c).
    \label{dual-ideal}
\end{equation}
From this observation, we know that $\Delta$ is the Eagon complex of the ideal $I(\Delta^c)$.

Within this framework, we can easily deduce the following facts.

\begin{observation}
    \label{dual-observation}
    \begin{enumerate}[a]
        \item $F\in \Delta$ if and only if the squarefree monomial $\bdx^{F^c}\in I_{\Delta^\vee}$;
        \item The ideal $I_{(\Delta^{(r,s)})^\vee}$ is generated by the squarefree monomials $f\in I_{\Delta^\vee}$ such that $\deg(f)\ge n-s$ and  $f\in \braket{u}$ for some minimal monomial generator $u\in G(I_{\Delta^\vee})$ such that $\deg(u)\le n-r$.
        \item $F\in \Delta\setminus v$ if and only if $\bdx^{F^c}\in I_{\Delta^\vee}$ with $x_v\divides \bdx^{F^c}$;
        \item $F\in \link_\Delta(v)$ if and only if $x_v\divides \bdx^{F^c}$ and $\bdx^{F^c}/x_v \in I_{\Delta^\vee}$.
        \item $I_{(\Delta^{[1]})^\vee}=I_{\Delta^\vee}\wedge \frakm$.
    \end{enumerate}
\end{observation}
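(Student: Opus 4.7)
The plan is to reduce all five items to direct translations via the identification $I_{\Delta^\vee} = I(\Delta^c)$ of equation~(\ref{dual-ideal}). The master principle to keep at hand is: a squarefree monomial $\bdx^B$ lies in $I_{\Delta^\vee}$ if and only if $B \supseteq F^c$ for some $F \in \calF(\Delta)$, equivalently $B^c \subseteq F$, equivalently $B^c \in \Delta$. With this principle available, each of (a)--(e) becomes a translation of a combinatorial condition on faces of $\Delta$ (or of its deletion, link, skeleton, or facet skeleton) into a monomial condition inside $I_{\Delta^\vee}$.

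Parts (a), (c) and (d) should be immediate. Part (a) is the master principle specialized to $B = F^c$. Part (c) follows by combining (a) with the equivalence $v \notin F \Longleftrightarrow v \in F^c \Longleftrightarrow x_v \mid \bdx^{F^c}$. For part (d) I would rewrite $F \in \link_\Delta(v)$ as the conjunction of $v \notin F$ and $F \cup \{v\} \in \Delta$; the first gives $x_v \mid \bdx^{F^c}$, the second gives $\bdx^{(F \cup \{v\})^c} \in I_{\Delta^\vee}$ via (a), and one rewrites the latter monomial as $\bdx^{F^c}/x_v$.

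For part (b), I would unpack the defining conditions on $A \in \Delta^{(r,s)}$ separately. The cardinality bound on $A$ translates into a lower bound on $\deg(\bdx^{A^c})$, while the existence of a containing facet $F$ of large enough dimension becomes the divisibility $\bdx^{F^c} \mid \bdx^{A^c}$ together with an upper bound on $\deg(\bdx^{F^c})$. Identifying the minimal generators of $I_{\Delta^\vee}$ with the monomials $\bdx^{F^c}$ for $F \in \calF(\Delta)$ then yields the claimed description. For part (e), the $1$-facet skeleton $\Delta^{[1]}$ has generators $F \setminus \{v\}$ with $F \in \calF(\Delta)$ and $v \in F$, so $I_{(\Delta^{[1]})^\vee}$ is generated by the squarefree monomials $\bdx^{F^c \cup \{v\}}$ for the same data. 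The condition $v \in F$ says precisely that $\supp(\bdx^{F^c}) \cap \supp(x_v) = \emptyset$, in which case the wedge product definition gives $\bdx^{F^c} \wedge x_v = \pm\, \bdx^{F^c \cup \{v\}}$, and conversely every nonzero wedge of a minimal generator of $I_{\Delta^\vee}$ with some $x_v$ arises this way. Invoking the preceding lemma, which identifies $I \wedge J$ with the $S$-ideal generated by the wedges of the minimal generators, then closes the argument.

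The only step requiring genuine bookkeeping is part (b), where two quantitative constraints --- one on the face $A$ itself, one on a containing facet $F$ --- must be tracked simultaneously and repackaged into the two-condition description in the statement. The remaining parts are essentially tautological once equation~(\ref{dual-ideal}) and the definitions of deletion, link and facet skeleton are spelled out.
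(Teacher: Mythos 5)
The paper states this as an observation and provides no proof, merely remarking that the facts ``can easily be deduced'' from the identification $I_{\Delta^\vee} = I(\Delta^c)$. Your verification is precisely the argument the paper has in mind: reduce everything to the master equivalence $\bdx^B \in I_{\Delta^\vee} \Longleftrightarrow B^c \in \Delta$, then unwind the definitions of deletion, link, skeleton and facet skeleton. Parts (a), (c), (d) and (e) are handled correctly, and in (e) you correctly invoke the lemma identifying $I \wedge J$ with the ideal generated by wedges of minimal generators and the fact that the wedge vanishes exactly when $v \in F^c$.

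One remark on part (b): if you actually carry out the degree bookkeeping you sketch, you will find a discrepancy with the statement as printed. With the paper's convention $\dim(A) = |A| - 1$, the condition $\dim(A) \le s$ gives $|A| \le s+1$, hence $\deg(\bdx^{A^c}) = n - |A| \ge n - s - 1$; and $\dim(F) \ge r$ gives $|F| \ge r+1$, hence $\deg(\bdx^{F^c}) \le n - r - 1$. So the correct bounds are $\deg(f) \ge n - s - 1$ and $\deg(u) \le n - r - 1$, not $n - s$ and $n - r$ as the paper states. Your proof outline is sound, but when you fill in the arithmetic you will arrive at these shifted bounds; the version in the paper appears to have an off-by-one.
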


\section{Ideals with linear quotients}
In this section, we investigate the monomial ideals with linear quotients with respect to the componentwise support-linearity.

\begin{definition}
    Let $M$ be a finitely generated $\NN^n$-graded $S$-module.
    \begin{enumerate}[a]
        \item For each integer $d\in \Set{0,1,\dots,n}$, we denote by $M_{\braket{d}}$ the submodule of $M$ generated by all homogeneous elements $m\in M$ such that $\suppdeg(m)=d$.
        \item The \Index{support-regularity} of $M$ is
            \[
                \suppreg(M):=\max\mySet{ |\supp(\bdb)|-i : \beta_{i,\bdb}(M)\ne 0}.
            \]
        \item If $M=M_{\braket{d}}$ and $\suppreg(M)=d$ for some integer $d$, we say $M$ is \Index{$d$-support-linear}.
        \item If $M_{\braket{d}}$ is $d$-support-linear for all $d$, we say $M$ is \Index{componentwise support-linear}.
    \end{enumerate}
\end{definition}

Let $G(M)$ be a minimal homogeneous generating set for $M$.  Like the Castelnuovo-Mumford regularity, we have
\[
    \suppreg(M)\ge \max\mySet{|\supp(m)|: m\in G(M)}.
\]
We also notice that if $I$ is a monomial ideal with $I=I_{\braket{d}}$, then $I\wedge \frakm$ is precisely $I_{\braket{d+1}}$.

If $M_{\ge k}$ is the submodule of $M$ generated by all $M_j$ with $j\ge k$, the Castelnuovo-Mumford regularity of $M$ is
\[
    \reg(M)= \min\Set{k : \text{$M_{\ge k}$ is $k$-regular}}.
\]
We will analogously denote $\sum_{j\ge k}M_{\braket{j}}$ by $M_{\braket{\ge k}}$.

\begin{proposition}
    Let $M$ be a finitely generated $\NN^n$-graded $S$-module such that $\suppreg(M)=k$. Then
    \begin{enumerate}[a]
        \item $M_{\braket{k}}=M_{\braket{\ge k}}$ is $k$-support-linear, and
        \item $k=\min\Set{l\in \NN : \text{$M_{\braket{\ge l}}$ is $l$-support-linear}}$.
    \end{enumerate}
\end{proposition}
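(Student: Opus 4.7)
My plan is to reduce both parts to a single Koszul-theoretic lemma: \emph{if $N$ is a finitely generated $\NN^n$-graded $S$-module with $N_\bdb = 0$ whenever $|\supp(\bdb)| \ge k$, then $\suppreg(N) \le k-1$.} I would prove this first by computing $\Tor^S(\KK, N)$ via the Koszul complex on $x_1, \ldots, x_n$. Its multidegree-$\bdb$ part in homological degree $i$ is $\bigoplus_{F \subseteq \supp(\bdb),\, |F| = i} N_{\bdb - \bde_F}$, and a direct count yields
\[
|\supp(\bdb - \bde_F)| = |\supp(\bdb)| - |\{\, j \in F : \bdb(j) = 1 \,\}| \ge |\supp(\bdb)| - i.
\]
So whenever $|\supp(\bdb)| - i \ge k$ every summand vanishes by hypothesis, forcing $\beta_{i,\bdb}(N) = 0$.

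For part (a), I would first observe that $M_{\braket{\ge k}}$ coincides with the submodule $\{\, m \in M : \suppdeg(m) \ge k \,\}$, since scalar multiplication never decreases a support. Hence $M/M_{\braket{\ge k}}$ meets the hypothesis of the lemma and $\suppreg(M/M_{\braket{\ge k}}) \le k-1$. The $\Tor$ long exact sequence of
\[
0 \to M_{\braket{\ge k}} \to M \to M/M_{\braket{\ge k}} \to 0
\]
then gives $\beta_{i,\bdb}(M_{\braket{\ge k}}) \le \beta_{i+1,\bdb}(M/M_{\braket{\ge k}}) + \beta_{i,\bdb}(M)$, and when $|\supp(\bdb)| - i > k$ both right-hand terms vanish (by the lemma and by $\suppreg(M) = k$), so $\suppreg(M_{\braket{\ge k}}) \le k$. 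Since $M_{\braket{\ge k}}$ is generated in support-degree $\ge k$ by construction, this forces its minimal generators to sit in support-degree exactly $k$, giving $M_{\braket{\ge k}} = M_{\braket{k}}$ and its $k$-support-linearity simultaneously.

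For part (b), I would run the same short exact sequence with any $l$ for which $M_{\braket{\ge l}}$ is $l$-support-linear. The lemma gives $\suppreg(M/M_{\braket{\ge l}}) \le l-1$, and the exact piece $\Tor_i(\KK, M_{\braket{\ge l}}) \to \Tor_i(\KK, M) \to \Tor_i(\KK, M/M_{\braket{\ge l}})$ bounds $\beta_{i,\bdb}(M) \le \beta_{i,\bdb}(M_{\braket{\ge l}}) + \beta_{i,\bdb}(M/M_{\braket{\ge l}})$, both vanishing for $|\supp(\bdb)| - i > l$. Hence $k = \suppreg(M) \le l$, while (a) realizes the minimum at $l = k$. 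The only genuinely combinatorial step, and the one I expect to require the most care, is the support inequality $|\supp(\bdb - \bde_F)| \ge |\supp(\bdb)| - i$ inside the Koszul complex; the rest is formal homological algebra.
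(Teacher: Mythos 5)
Your argument is correct, and it takes a genuinely different route from the paper. The paper proves (a) by an induction that passes through every intermediate truncation $M_{\braket{\ge s}}$ for $s = l, l+1, \ldots, k$ (where $l$ is the minimum support-degree of a minimal generator), at each step using the short exact sequence $0 \to M_{\braket{\ge s}} \to M_{\braket{\ge s-1}} \to M_{\braket{\ge s-1}}/M_{\braket{\ge s}} \to 0$ together with two cited lemmas of Sabzrou: one computing the support-regularity of a module concentrated in a single support-degree, and one controlling support-regularity along short exact sequences; part (b) is then extracted directly from the inductive statement established for (a), namely that $\suppreg(M_{\braket{\ge m}}) = k$ for every $m \le k$. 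You instead compress the whole thing into a single short exact sequence $0 \to M_{\braket{\ge k}} \to M \to M/M_{\braket{\ge k}} \to 0$ and prove the needed vanishing by hand via the Koszul complex: your lemma that $N_{\bdb} = 0$ for $|\supp(\bdb)| \ge k$ forces $\suppreg(N) \le k-1$ is exactly the self-contained ingredient that replaces the two citations, and its proof using the inequality $|\supp(\bdb - \bde_F)| \ge |\supp(\bdb)| - |F|$ is correct. For (b) you simply rerun the same sequence with $k$ replaced by an arbitrary $l$, which is a little less elegant than inheriting the conclusion from the inductive proof of (a) as the paper does, but is just as valid. The trade-off is clear: the paper's argument is shorter on the page but leans on the cited lemmas, whereas yours is longer but fully transparent and shows directly where the Koszul degrees come from. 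One minor stylistic remark: your identification $M_{\braket{\ge k}} = \bigoplus_{|\supp(\bdb)| \ge k} M_{\bdb}$ and the deduction $M_{\braket{\ge k}} = M_{\braket{k}}$ from generation in support-degree exactly $k$ are both correct, but the latter can also be seen elementarily (factoring any element of support-degree $> k$ through one of support-degree $k$ by splitting off part of the monomial coefficient) without first establishing the regularity bound, which is what the paper's opening sentence does.
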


\begin{proof}
    \begin{enumerate}[a]
        \item Suppose that $G(M)=\Set{m_1,\dots,m_t}$.  Since $\suppreg(M)=k$, every $m_i$  satisfies $\suppdeg(m_i)\le k$. Thus, $M_{\braket{k}}=M_{\braket{\ge k}}$.  Suppose that $l=\min\Set{\suppdeg(m_j) | 1\le j \le t}$. Therefore, $M=M_{\braket{\ge l}}$. We assert that $\suppreg(M_{\braket{\ge s}})=k$ for each $s=l,l+1,\dots,k$. In particular, we will have $\suppreg(M_{\braket{\ge k}})=k$. Hence, $M_{\braket{k}}$ is $k$-support-linear.

            We prove the assertion by inducting on the integer $s$. The case $l=s$ is trivial. Next, we may assume that $l<s\le k$. By induction hypothesis, $\suppdeg(M_{\braket{\ge s-1}})=k$.  Notice that every nonzero homogeneous element of $M_{\braket{\ge s-1}}/M_{\braket{\ge s}}$ has support-degree $s-1$. Thus, $\suppreg(M_{\braket{\ge s-1}}/M_{\braket{\ge s}})=s-1$ by \cite[Lemma 2.10(2)]{MR2561856}.  Now, the assertion for $s$ follows from the short exact sequence
            \[
                0\to M_{\braket{\ge s}} \to M_{\braket{\ge s-1}} \to M_{\braket{\ge s-1}}/M_{\braket{\ge s}} \to 0
            \]
            and \cite[Lemma 2.7]{MR2561856}.
        \item Let $m=\min\Set{l\in [n] : \text{$M_{\braket{\ge l}}$ is $l$-support-linear}}$. As $M_{\braket{\ge k}}$ is $k$-support-linear, we have $m\le k$. If $m<k$, the proof for part (a) implies that $\suppreg(M_{\braket{\ge m}})=k>m$. Thus $M_{\braket{\ge m}}$ is not $m$-support-linear, which is absurd.  \qedhere
    \end{enumerate}
\end{proof}

\begin{definition}
\begin{enumerate}[a]
\item We say that the monomial ideal $I$ has \Index{linear quotients}, if there exists an order $\sigma=u_1,\dots,u_m$ of $G(I)$ such that each colon ideal $\braket{u_1,\dots,u_{i-1}}:u_i$ is generated by a subset of the variables for $i=2,3,\dots,m$. Any order of these generators for which we have linear quotients will be called an \Index{admissible order}. If
    \[
        \deg(u_1)\le \deg(u_2) \le \cdots \le \deg(u_m),
    \]
    this order is \Index{degree increasing}.  Likewise, if
    \[
        \suppdeg(u_1)\le \suppdeg(u_2) \le \cdots \le \suppdeg(u_m),
    \]
    this order is \Index{support-degree increasing}.
            \item The monomial ideal $I$ has \Index{componentwise linear quotients} (resp.~\Index{support-componentwise linear quotients}) if for each $d$, the ideal $I_{d}$ (resp.~$I_{\braket{d}}$) has linear quotients.
    \end{enumerate}
\end{definition}

Obviously, $\sigma=u_1,\dots,u_m$ is an admissible order of $I$  if and only for each pair $i<j$, we can find $k<j$ and $d\in[n]$ such that $\braket{u_k}:u_j=\braket{x_d}\supset \braket{u_i}:u_j$. We will use repeatedly the fact that $\braket{u_i}:u_j=\braket{\frac{u_i}{\gcd(u_i,u_j)}}$.

We will treat principal monomial ideals as trivial ideals with linear quotients. Notice that monomial ideals with linear quotients always have degree-increasing admissible orders, by \cite[Lemma 2.1]{MR2557882}.

It is well known (cf.~\cite[Lemma 4.1]{MR1995137}) that if $I$ has linear quotients, then 
\[
\reg(I)=\max\Set{\deg(u) : u\in G(I)}.
\]
 We have a support degree version for this result.

\begin{proposition}
    Suppose that the monomial ideal $I$ has linear quotients with respect to the admissible order $u_1,\dots,u_m$. Then $\suppreg(I)=\max\Set{\suppdeg(u_i) : i=1,2,\dots,m}$.
\end{proposition}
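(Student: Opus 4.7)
The plan is to mirror the classical proof that $\reg(I)=\max\{\deg(u):u\in G(I)\}$ for linear-quotients ideals, translating each step to the support-degree setting. The lower bound is immediate: because $\{u_1,\ldots,u_m\}$ is the minimal monomial generating set $G(I)$, the general inequality $\suppreg(M)\ge\max\{|\supp(m)|:m\in G(M)\}$ recorded just before this proposition yields $\suppreg(I)\ge\max_i\suppdeg(u_i)$.

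For the upper bound I would induct on $m$. The case $m=1$ is trivial since $I=(u_1)$ is free. For $m\ge 2$, put $J=\langle u_1,\ldots,u_{m-1}\rangle$, which still has linear quotients via the inherited order, and $L=J:u_m$. By the admissibility hypothesis, $L=\langle x_{j_1},\ldots,x_{j_k}\rangle$ is generated by a subset of the variables. The standard exact sequence
\[
0\to J\to I\to S/L(-\bda_m)\to 0,
\]
where $\bda_m$ is the exponent vector of $u_m$, combined with the support-regularity short-exact-sequence estimate \cite[Lemma 2.7]{MR2561856}, reduces matters to bounding the two outer terms. The induction hypothesis handles the first: $\suppreg(J)\le\max_{i<m}\suppdeg(u_i)$.

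For $\suppreg(S/L(-\bda_m))$, observe that $L$ is generated by variables, so the minimal $S$-free resolution of $S/L$ is the Koszul complex on $x_{j_1},\ldots,x_{j_k}$. Consequently every nonzero $\beta_{j,\bdc}(S/L)$ satisfies $|\supp(\bdc)|=j$, which gives $\suppreg(S/L)=0$. The effect of the shift is controlled by the elementary inequality $|\supp(\bdc+\bda_m)|\le|\supp(\bdc)|+\suppdeg(u_m)$, producing $\suppreg(S/L(-\bda_m))\le\suppdeg(u_m)$. Combining the two bounds closes the induction.

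The main obstacle I foresee is essentially bookkeeping rather than substance: one needs to check that \cite[Lemma 2.7]{MR2561856} indeed yields $\suppreg(B)\le\max(\suppreg(A),\suppreg(C))$ for $0\to A\to B\to C\to 0$ in the form used above, with no off-by-one coming from the distinction between $I$ and $S/I$. If the reference is phrased differently, the required estimate can still be read off the long exact sequence of $\Tor$, together with the identity $\beta_{i,\bdb}(I)=\beta_{i+1,\bdb}(S/I)$ for $i\ge 0$, which shifts the quantity $|\supp(\bdb)|-i$ by exactly one and therefore leaves the argument intact.
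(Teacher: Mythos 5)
Your proof is correct and follows essentially the same route as the paper's: the lower bound from the general inequality $\suppreg(M)\geq\max\{|\supp(m)|:m\in G(M)\}$, an induction on the admissible order using the short exact sequence $0\to J\to I\to S/L(-\bda_m)\to 0$ with $L$ generated by variables, and the bound $\suppreg(S/L(-\bda_m))\le\suppdeg(u_m)$ combined with the same cited lemma on support-regularity in short exact sequences. The only cosmetic difference is that you spell out the Koszul-complex computation of $\suppreg(S/L)$ where the paper simply states $\suppreg((S/L_k)(-\bdb))=|\supp(\bdb)|$.
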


\begin{proof}
    Suppose that the $\max\Set{\suppdeg(u_i) : i=1,2,\dots,m}=d$.

    For each $k=1,2,\dots,m$, let $I_k=\braket{u_1,\dots,u_k}$ and $L_k=I_{k-1}:u_k$ with $I_0=0$. We show by induction that $\suppreg(I_k)\le d$.  This claim is clear when $k=1$. Thus we may suppose that $k\ge 2$. Suppose that $u_k=\bdx^{\bdb}$. We have the following short exact sequence of $\NN^n$-graded modules
    \[
        0\to I_{k-1} \to I_k \to I_k/I_{k-1} \to 0.
    \]
    But $I_k/I_{k-1}\cong (S/L_k)(-\bdb)$ and $\suppreg((S/L_k)(-\bdb))=|\supp(\bdb)|\le d$. As $\suppreg(I_{k-1})\le d$ by induction hypothesis, we can conclude from \cite[Lemma 2.7]{MR2561856} that $\suppreg(I_k)\le d$. Thus, $\suppreg(I)\le d$.

    On the other hand, there is one generator of $I$ with support-degree $d$. Therefore, $\suppreg(I)\ge d$.
\end{proof}

\begin{corollary}
    \label{lq-d}
    Suppose that the monomial ideal $I$ has linear quotients with respect to the admissible order $u_1,\dots,u_m$ and $\suppdeg(u_i)=d$ for each $i$. Then $I$ is $d$-support-linear.
\end{corollary}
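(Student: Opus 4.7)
The plan is to derive the corollary as a direct consequence of the preceding proposition together with an immediate check of the generating condition in the definition of $d$-support-linearity.

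Recall that to verify that $I$ is $d$-support-linear, by the definition I need two things: (i) $I = I_{\langle d \rangle}$, and (ii) $\suppreg(I) = d$. So my plan has two short steps.

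First, I will observe that (i) is automatic from the hypothesis. Since $G(I) = \{u_1,\dots,u_m\}$ is a minimal monomial generating set of $I$ and $\suppdeg(u_i)=d$ for every $i$, every generator of $I$ lies in the submodule $I_{\langle d \rangle}$ generated by homogeneous elements of support-degree $d$. Hence $I \subseteq I_{\langle d\rangle}$, and the reverse containment is tautological, giving $I = I_{\langle d\rangle}$.

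Second, I will invoke the proposition just proved to handle (ii). That proposition asserts precisely that under the linear-quotients hypothesis,
\[
\suppreg(I) \;=\; \max\{\suppdeg(u_i) : i=1,\dots,m\}.
\]
Because every $\suppdeg(u_i)$ equals $d$, the right-hand side collapses to $d$, so $\suppreg(I) = d$. Combined with (i), this is exactly the definition of $d$-support-linearity, finishing the proof.

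Since the heavy lifting has already been done in the preceding proposition, there is no real obstacle here; the corollary is a matter of assembling the definition with that proposition. The only thing worth being careful about is not conflating support-degree with ordinary degree when reading off $I = I_{\langle d\rangle}$, but since the hypothesis is stated in terms of support-degree this is straightforward.
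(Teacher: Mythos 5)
Your proof is correct and is essentially the only sensible argument: the paper states the corollary immediately after the proposition computing $\suppreg(I)$ and offers no separate proof, precisely because it follows by the two short observations you make ($I = I_{\langle d\rangle}$ since every $u_i$ has support-degree $d$, and $\suppreg(I)=d$ by the proposition).
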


An \Index{irreducible} monomial ideal is a monomial ideal that cannot be written as proper intersection of two other monomial ideals. It follows from \cite[Corollary 1.3.2]{MR2724673} that irreducible monomial ideals are precisely those ideals which are generated by pure powers of the variables.

\begin{definition}
    We say that the monomial ideal $I$ has \Index{Popescu quotients}, if there exists an order $\sigma=u_1,\dots,u_m$ of $G(I)$ and index $s$ with $1\le s \le m$, such that
    \begin{enumerate}[a]
        \item $\supp(u_1)=\supp(u_j)$ for all $1\le j \le s$;
        \item\label{b} for each $i=s+1,s+2,\dots,m$, the colon ideal
            $\braket{u_1,\dots,u_{i-1}}:u_i$ is irreducible;
        \item whenever $\supp(u_i)\subset \supp(u_j)$, then
            $\supp(u_i)=\supp(u_j)$ or $i<j$.
    \end{enumerate}
    Any order of these generators for which we have Popescu quotients will be called an \Index{admissible Popescu order}. If the sequence only satisfies the condition \ref{b} with $s=1$, we say that $I$ has \Index{weak Popescu quotients} and the order is a \Index{weak admissible Popescu order}.
\end{definition}

Obviously, monomial ideals which have linear quotients with respect to support-degree increasing admissible orders, also have Popescu quotients.  And the squarefree monomial ideals that have weak Popescu quotients are exactly those squarefree monomial ideals that have linear quotients.

When $\bda,\bdg\in\NN^n$ with $\bda\le\bdg$, let $\bdg\setminus \bda$ be the vector whose $i^{\text{th}}$ coordinate is
\[
    \bdg(i)\setminus \bda(i):=
    \begin{cases}
        \bdg(i)+1-\bda(i), & \text{if $\bda(i)\ge 1$},\\
        0, & \text{if $\bda(i)=0$}.
    \end{cases}
\]
When  $I$ is a monomial ideal whose minimal monomial generators all
divide $\bdx^\bdg$, the \Index{Alexander dual ideal} of $I$ with respect to
$\bdg$ is
\[
    I^{[\bdg]}:=\bigcap \Set{\frakm^{\bdg\setminus\bda} : \bdx^\bda\in G(I)}.
\]
In \cite{MR2110098}, Miller defined an Alexander duality functor $\AD_\bdg$ for $\bdg$-determined modules. In our case, $I$ is $\bdg$-determined and $\AD_\bdg(I)$ is precisely the quotient ring $S/I^{[\bdg]}$. Notice that $S/I^{[\bdg]}$ is also $\bdg$-determined and the Alexander duality functor satisfies $\AD_\bdg(\AD_\bdg(I))=I$.

For each generator $u_i=\bdx^{\bda_i}\in G(I)$, let $\bda_i^\ast\in \NN_\infty^n$ be the vector whose $k^{\th}$ coordinate is
\[
    \bda_i^\ast(k)=
    \begin{cases}
        \bdg(k)-\bda_i(k),& \text{if $\bda_i(k)>1$},\\
        +\infty,& \text{if $\bda_i(k)=0$}.
    \end{cases}
\]
Let $\Gamma_I=\Gamma(\bda_1^*,\dots,\bda_s^*)$ be the multicomplexes generated by $\bda_1^\ast,\dots,\bda_s^\ast$ in the sense of \cite[Definition 9.2]{MR2267659}.  We write $I(\Gamma_I)$ for the ideal of nonfaces in $\Gamma_I$. By \cite[Proposition 9.12]{MR2267659}, we have
\[
    I(\Gamma(\bda_1^*,\dots,\bda_s^*))=\bigcap_{j=1}^s I(\Gamma(\bda_j^*)) =\bigcap_{j=1}^s \frakm^{\bdg\setminus \bda_i} =I^{[\bdg]}.
\]
Thus, $S/I(\Gamma_I)=\AD_\bdg(I)$.

\begin{observation}
    The ideal $I$ has Popescu quotients with respect to the sequence $u_1,\dots,u_s$ if and only if $\Gamma$ is maximal shellable in the sense of \cite{MR2338716} with respect to the sequence of maximal facets $\bda_1^\ast,\dots,\bda_s^\ast$.
\end{observation}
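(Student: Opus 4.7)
The plan is to verify the equivalence conditionwise: each of the three requirements in the definition of Popescu quotients translates, under Miller's Alexander duality $\bda_i\leftrightarrow\bda_i^\ast$, into the corresponding structural requirement in Popescu's definition of maximal shellability of a multicomplex from \cite{MR2338716}. The paper has already recorded the pertinent dual formulas $I(\Gamma(\bda_j^\ast))=\frakm^{\bdg\setminus\bda_j}$ and $I(\Gamma_I)=\bigcap_{j=1}^s\frakm^{\bdg\setminus\bda_j}=I^{[\bdg]}$, so the translation can be carried out on a generator-by-generator basis.

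First I would unpack Popescu's notion of \emph{maximal shellable} multicomplex into three requirements on the ordering $\bda_1^\ast,\dots,\bda_s^\ast$ of the maximal facets: (i) an initial segment collects all the maximal facets of largest support size; (ii) no facet of strictly smaller support precedes one of larger support; and (iii) for each index $i$ past this initial segment, the intersection $\Gamma(\bda_i^\ast)\cap \Gamma(\bda_1^\ast,\dots,\bda_{i-1}^\ast)$ is irreducibly attached in the prescribed sense. I would then match (i) with Popescu-quotient condition (a), (ii) with (c), and (iii) with (b).

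The main computational step is the matching of (b) with (iii). Here one uses the identity
\[
    \braket{u_1,\dots,u_{i-1}}:u_i=\Braket{u_k/\gcd(u_k,u_i) : k<i},
\]
so that irreducibility of the colon ideal is equivalent to its minimal generators being pure powers of single variables. Rewriting the exponent $\bda_k(j)-\min(\bda_k(j),\bda_i(j))$ in terms of the starred vectors via $I(\Gamma(\bda_i^\ast))=\frakm^{\bdg\setminus\bda_i}$, one sees that this pure-power condition is precisely Popescu's description of $\Gamma(\bda_i^\ast)\cap\Gamma(\bda_1^\ast,\dots,\bda_{i-1}^\ast)$ as a shelling step. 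Conditions (a) and (c) become bookkeeping once the bijection between $\supp(\bda_i)$ and the set of finite coordinates of $\bda_i^\ast$ is fixed.

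The step I expect to be the main obstacle is the precise matching for (b): one must carefully track how passing from the ordinary exponent vectors $\bda_i$ to the starred vectors $\bda_i^\ast$ (with their $+\infty$ entries) interacts with the colon arithmetic, and confirm that nothing is lost under Alexander duality either when forming $u_k/\gcd(u_k,u_i)$ on the ideal side or when forming intersections of $\Gamma(\bda_k^\ast)$ with $\Gamma(\bda_i^\ast)$ on the multicomplex side. Once these two dictionaries are laid out in parallel, the observation follows at once.
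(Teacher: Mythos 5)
The paper records this as an \emph{Observation} with no proof attached: the definition of Popescu quotients was manufactured precisely so that, under $\bda_i\mapsto\bda_i^{\ast}$, its three clauses become the three clauses of Popescu's maximal shellability for $\Gamma_I$, so the ``proof'' is just the dictionary and your strategy of matching conditions one by one is the right one. One correction to your paraphrase of the multicomplex side, though: you cast conditions (i) and (ii) in terms of the \emph{size} of the support, but (a) and (c) on the ideal side are about the support itself and its \emph{containment} order, not cardinality. Since the set of finite coordinates of $\bda_i^{\ast}$ is exactly $\supp(\bda_i)$ and the infinite part is $[n]\setminus\supp(\bda_i)$, condition (a) dualizes to the first $s$ maximal facets having the \emph{same} infinite part (not merely the same number of infinite coordinates), and condition (c) dualizes to: if $\inf(\bda_i^{\ast})\supsetneq\inf(\bda_j^{\ast})$ then $i<j$, again a containment statement. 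With that adjustment, your matching of (b) with the shelling-step condition is correct: the colon ideal $\braket{u_1,\dots,u_{i-1}}:u_i=\braket{u_k/\gcd(u_k,u_i):k<i}$ is irreducible iff it is generated by pure variable powers, which under $I(\Gamma(\bda_j^{\ast}))=\frakm^{\bdg\setminus\bda_j}$ is exactly Popescu's requirement on $\Gamma(\bda_i^{\ast})\cap\Gamma(\bda_1^{\ast},\dots,\bda_{i-1}^{\ast})$.
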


\begin{theorem}
\label{lq-sdi}
    If the monomial ideal $I$ has linear quotients with respect to some support-degree increasing admissible order, then it is componentwise support-linear.
\end{theorem}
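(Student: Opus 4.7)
The plan is to route through Alexander duality and Popescu's theory of shellable multicomplexes. First, fix some $\bdg\in\NN^n$ large enough that $I$ is $\bdg$-determined (e.g.\ $\bdg(i)=\max_j \bda_j(i)$), and observe that a support-degree increasing admissible order $u_1,\dots,u_m$ of $G(I)$ automatically endows $I$ with Popescu quotients at $s=1$. Condition (a) is vacuous for $s=1$. Condition (b) holds because linear quotients forces each colon $\braket{u_1,\dots,u_{i-1}}:u_i$ to be generated by a subset of the variables, and such an ideal, being generated by pure first powers of distinct variables, is irreducible. Condition (c) holds because a strict containment $\supp(u_i)\subsetneq\supp(u_j)$ forces $\suppdeg(u_i)<\suppdeg(u_j)$, so the support-degree increasing order puts $i<j$.

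By the observation immediately preceding the theorem, this shows that the multicomplex $\Gamma_I$ is maximal shellable in the sense of \cite{MR2338716}, with respect to the sequence of maximal facets $\bda_1^\ast,\dots,\bda_m^\ast$. Invoking Popescu's main result that shellable multicomplexes yield sequentially Cohen-Macaulay quotient rings, we conclude that $\AD_\bdg(I)=S/I(\Gamma_I)=S/I^{[\bdg]}$ is sequentially Cohen-Macaulay.

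The conclusion then follows from Sabzrou's version of Alexander duality \cite{MR2561856}, which identifies sequentially Cohen-Macaulay modules as precisely the Alexander duals (under $\AD_\bdg$) of componentwise support-linear modules. Since $\AD_\bdg\circ\AD_\bdg$ is the identity on $\bdg$-determined objects, the sequential Cohen-Macaulayness of $\AD_\bdg(I)$ transfers to componentwise support-linearity of $I$ itself.

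The main technical hurdle is verifying that every leg of this chain behaves correctly in the possibly non-squarefree setting and is insensitive to the specific choice of $\bdg$; in particular, one must check that Popescu's formulation of shellability used in his sCM theorem matches exactly the Popescu-quotient conditions transported through $\Gamma_I$, and that Sabzrou's duality is stated at the level of ideals in the form we need. An alternative, more hands-on route would be to show directly that each $I_{\braket{d}}$ admits an admissible order of linear quotients whose generators all have support-degree exactly $d$, and then apply Corollary \ref{lq-d}; this parallels the argument of \cite{MR2557882} but requires a careful bookkeeping of how minimal generators of $I_{\braket{d}}$ are obtained from the $u_i$ with $\suppdeg(u_i)\le d$ by attaching squarefree complementary monomials.
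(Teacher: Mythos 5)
Your proof is correct and follows essentially the same route as the paper's own argument: verify that a support-degree increasing admissible order gives Popescu quotients (the paper states this as an "obvious" observation), pass to the multicomplex $\Gamma_I$, invoke maximal shellable $\Rightarrow$ shellable $\Rightarrow$ sequentially Cohen--Macaulay, and then apply Sabzrou's Alexander-duality equivalence. The "alternative, more hands-on route" you sketch at the end is in fact the paper's second proof, carried out in Theorem~\ref{cpt-lq} and Corollary~\ref{cpt-sl}.
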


\begin{proof}
    As the ideal $I$ has Popescu quotients, the corresponding multicomplex $\Gamma_I$ constructed above is maximal shellable and hence shellable by \cite[Theorem 3.6]{MR2338716}. Consequently, the quotient ring $S/I(\Gamma)$ is sequentially Cohen-Macaulay by  \cite[Corollary 10.6]{MR2267659}. But by the correspondence established by \cite[Theorem 2.11]{MR2561856}, this is equivalent to saying that the ideal $I=\AD_\bdg(S/I(\Gamma_I))$ is componentwise support-linear.
\end{proof}

In Corollary \ref{cpt-sl}, we will provide another proof for the previous result.

\begin{proposition}
    [{cf.~\cite[Lemma 2.5]{MR2557882}}]
    \label{Im}
    If monomial ideal $I\subset S$ has linear quotients with respect to a support-degree increasing admissible order,  then the ideal $I\wedge \frakm$ also has linear quotients with respect to a support-degree increasing admissible order.
\end{proposition}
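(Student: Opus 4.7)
The plan is to lift the given admissible order on $G(I)$ directly to a support-degree increasing admissible order on $G(I \wedge \frakm)$. Write the given order as $u_1, \ldots, u_m$ with $\suppdeg(u_1) \le \cdots \le \suppdeg(u_m)$. Every element of $G(I \wedge \frakm)$ is, up to sign, of the form $u_i x_j$ with $j \in [n] \setminus \supp(u_i)$, and $\suppdeg(u_i x_j) = \suppdeg(u_i) + 1$. I would enumerate these candidates lexicographically in the pair $(i, j)$, discard duplicates and non-minimal entries, and take the resulting list $\sigma$ as the proposed order for $G(I \wedge \frakm)$. Since $\suppdeg(u_i)$ is nondecreasing in $i$, $\sigma$ is support-degree increasing.

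To verify that $\sigma$ is admissible, I would fix a minimal generator $u_i x_j \in \sigma$ and an arbitrary earlier minimal generator $u_k x_l \in \sigma$, and show that $(u_k x_l) : (u_i x_j)$ is divisible by a variable that itself equals $w : (u_i x_j)$ for some still-earlier $w \in \sigma$. If $k = i$ and $l < j$, the colon is $x_l$ and the witness is $w = u_i x_l$. In the substantive case $k < i$, setting $v = u_k / \gcd(u_k, u_i)$, the linear quotient property of $I$ at $u_i$ yields $k' < i$ and a variable $x_d$ with $u_{k'}/\gcd(u_{k'}, u_i) = x_d$ and $x_d \mid v$; a direct exponent-vector computation then gives $(u_{k'} x_j) : (u_i x_j) = (x_d)$, so $u_{k'} x_j$ is the natural witness.

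The main obstacle is ensuring that $u_{k'} x_j$ is genuinely available as a predecessor in $\sigma$: we must rule out both $j \in \supp(u_{k'})$ (which would kill the wedge) and pathological non-minimality. Writing $u_{k'} = x_d \cdot m$ with $m = \gcd(u_{k'}, u_i)$, the inclusion $\supp(m) \subseteq \supp(u_i)$ combined with $j \notin \supp(u_i)$ shows that $j \in \supp(u_{k'})$ is equivalent to $d = j$. In that degenerate case $u_{k'} = x_j \cdot m$ divides $u_i x_j$, so picking any $l^{\ast} \in \supp(u_i) \setminus \supp(m)$ would yield $u_{k'} x_{l^{\ast}}$ properly dividing $u_i x_j$, contradicting the minimality of $u_i x_j$; hence $\supp(m) = \supp(u_i)$, which forces $\suppdeg(u_{k'}) = \suppdeg(u_i) + 1$ and contradicts $k' < i$ in the support-degree increasing order. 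Thus the obstruction $d = j$ never actually arises, and it is precisely here that the support-degree hypothesis is essential. If $u_{k'} x_j$ happens merely to be non-minimal, one may replace it by any minimal generator $w \in \sigma$ dividing it; the relations $w \mid u_{k'} x_j \mid u_i x_j \cdot x_d$ together with $w \nmid u_i x_j$ (else $u_i x_j$ would itself fail to be minimal) force $w : (u_i x_j) = (x_d)$, and a comparison of support-degrees places $w$ before $u_i x_j$ in $\sigma$.
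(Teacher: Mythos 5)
The approach here is essentially the same as the paper's: the paper constructs the set $\widetilde{T}$ of pairs $(i,j)$ with $j\notin \supp(u_i)$, removes pairs $(r,s)$ that are dominated by an earlier $(i,j)$ (i.e.\ $i<r$ and $\phi(i,j)\mid \phi(r,s)$), and verifies the linear quotient condition by producing the witness $u_{k'}x_j$ (paper: $u_q x_j$), ruling out the degenerate case $d=j$ (paper: $x_t=x_j$) with a support-degree argument, and finally passing to an element of $\widetilde{T}'$ that precedes $(m,j)$. Your plan matches this case by case, including the analysis showing $j\notin\supp(u_{k'})$ and the exponent-vector check that $(u_{k'}x_j):(u_ix_j)=(x_d)$.

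There are, however, two places where your write-up is imprecise in a way that matters. First, in ruling out $d=j$, you claim $u_{k'}x_{l^*}$ \emph{properly} divides $u_ix_j$; in fact equality $u_{k'}x_{l^*}=u_ix_j$ is a priori possible, and the contradiction in that degenerate sub-case comes from the discard rule (the pair $(k',l^*)$ with $k'<i$ would have been kept in place of $(i,j)$), not from non-minimality of $u_ix_j$. This is exactly why the paper phrases the removal rule without a properness requirement. Second, and more substantively, the final sentence ``a comparison of support-degrees places $w$ before $u_ix_j$ in $\sigma$'' does not by itself suffice: your order $\sigma$ is lexicographic in the pair index $(a,b)$, which refines the support-degree order but is not determined by it, and all one gets directly is $\suppdeg(w)\le\suppdeg(u_ix_j)$, with equality quite possible when $\suppdeg(u_{k'})=\suppdeg(u_i)$. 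What is actually needed, and what the paper supplies, is that the removal/discard process guarantees an element $(r,s)$ of $\sigma$ with $r\le k'<i$ and $\phi(r,s)\mid u_{k'}x_j$: starting from $(k',j)\in\widetilde{T}$, either it survives, or it was discarded in favour of a pair with strictly smaller first index, and this terminates. Replacing your support-degree remark with this bookkeeping, which you already have all the ingredients for, closes the gap and recovers precisely the paper's argument.
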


\begin{proof}
    We may suppose that $I$ has linear quotients with respect to a support-degree increasing admissible order $\sigma=u_1,\dots,u_m$.  Let
    \[
        T=\Set{(i,j) | 1\le i \le m \text{ and }1\le j \le n}.
    \]
    We equip $T$ with a linear order:
    \[
        (i_1,j_1)\prec (i_2,j_2) \text{ if and only if }i_1<i_2 \text{ or }i_1=i_2 \text{ with }j_1<j_2.
    \]
    Let $\phi: T \to S$ defined by $(i,j)\mapsto u_ix_j$.  The ideal $I\wedge \frakm$ is generated by $\phi(\widetilde{T})$ where
    \[
        \widetilde{T}=\Set{ (i,j)\in T : j\notin \supp(u_i)}.
    \]
    We remove redundant elements in $\widetilde{T}$ following this rule: if $\phi(i,j)$ and $\phi(r,s)$ are two elements with $i<r$ and $\phi(i,j)\divides \phi(r,s)$, we remove $(r,s)$. Now, we end up with a subset $\widetilde{T}'\subset \widetilde{T}$.  We equip these two subsets of $T$ with the inherited linear order.  Obviously, $I\wedge \frakm=\braket{\phi(\widetilde{T}')}$.

    We will show that $I\wedge \frakm$ has linear quotients with respect to the monomials in $\widetilde{T}'$ in the given order which is clearly support-degree increasing.  The case when $m=1$ is clear. Thus, we may assume $m>1$ and by induction assume that $\braket{u_1,\dots,u_{m-1}}\wedge \frakm$ has linear quotients with respect to the linearly-ordered subset $\Set{(i,j)\in \widetilde{T}' : 1\le i\le m-1}$. This subset forms an initial piece of $\widetilde{T}'$.

    Take a pair $(m,j)\in \widetilde{T}'$ and assume that $J=\braket{\phi(r,s) : (r,s)\prec (m,j)\in \widetilde{T}'}$. We show that $J:u_mx_j$ is generated by some monomials of degree $1$.

    Let $(k,l)\prec (m,j)\in \widetilde{T}'$. If $k=m$, obviously $\braket{u_kx_l}:u_mx_j=\braket{x_l}$. Now suppose that $k<m$. Thus, there is some $q<m$ such that
    \[
        \braket{u_q}:u_m=\braket{x_t}\supset \braket{u_k}:u_m.
    \]

    We claim that $x_t\ne x_j$. Otherwise, since $\braket{u_q}:u_m=\braket{x_t}$, we have $u_qw=x_ju_m$ for some monomial $w\in S$. As $j\notin \supp(w)$, we have $j\in\supp(u_q)$ and $u_q=x_ju$ for some monomial $u$. Now $1\ne w=\frac{u_m}{u_q/x_j}$ and $\deg_{x_j}(u_q)=1$. Since $\suppdeg(u_m)> \suppdeg(u_q/x_j)=\supp(u_q)-1$, we can find some $d\ne t$ such that $d\in\supp(w)\setminus \supp(u_q)$. But then $x_du_q=x_dux_j\divides wux_j=u_mx_j$. This implies that $(m,j)\notin \widetilde{T}'$, which is absurd. Hence $x_t\ne x_j$.

    We claim that $j\notin \supp(u_q)$.  Otherwise, since $\braket{u_q}:u_m=\braket{x_t}$, we have $u_qf=u_mx_t$ for some monomial $f\in S$ with $t\notin \supp(f)$. The assumption that $j\in \supp(u_q)$ implies $x_j | u_mx_t$. As $j\ne t$, we have $x_j\divides u_m$, contradicting our choice of $x_j$.  Hence $j\notin \supp(u_q)$.

    Now $j\not\in \supp(u_q)$ and $(q,j)\in \widetilde{T}$.  Correspondingly there is some $(r,s)\in \widetilde{T}'$  preceding $(m,j)$ such that $u_rx_s \divides u_qx_j$. We claim that
    \begin{equation}
        \braket{u_rx_s}:u_mx_j=\braket{x_t} \supset \braket{u_kx_l}:u_mx_j.
        \label{clm-1}
    \end{equation}

    As $\braket{u_rx_s}:u_mx_j$ is principal and $\braket{u_rx_s}:u_mx_j\supset \braket{u_qx_j}:u_mx_j=\braket{x_t}$, if $\braket{u_rx_s}:u_mx_j\ne \braket{x_t}$, we must have $\braket{u_rx_s}:u_mx_j=S$. But this is equivalent to saying that $u_rx_s \divides u_mx_j$ and $(m,j)\notin \widetilde{T}'$, which cannot happen. Thus $\braket{u_rx_s}:u_mx_j=\braket{x_t}$.

    Since $\braket{x_t}\supset \braket{u_k}:u_m$, we conclude that $t\in \supp(u_k/\gcd(u_k,u_m))$. As $x_j\ne x_t$, this implies that 
    \[
        t\in \supp(u_kx_l/\gcd(u_kx_l,u_mx_j)).
    \]
    Hence $\braket{x_t} \supset \braket{u_kx_l}:u_mx_j$.

    Since we have established that $J:u_mx_j$ is generated by monomials of degree $1$, the induction process shows that $I\wedge \frakm$ has linear quotients with respect to the given support-degree increasing admissible order.
\end{proof}

\begin{corollary}
    \label{LQmI}
    Let $I\subset S$ be a monomial ideal with linear quotients. If all its minimal monomial generators have support-degree $d$, then $I_{\braket{d+1}}$ has linear quotients.
\end{corollary}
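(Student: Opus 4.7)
The plan is to observe that this corollary is an immediate consequence of Proposition \ref{Im} once we unwind the hypotheses. Since every minimal monomial generator of $I$ has the same support-degree $d$, the chain of inequalities $\suppdeg(u_1)\le\cdots\le\suppdeg(u_m)$ is trivially satisfied by \emph{any} admissible order of $G(I)$. In particular, $I$ has linear quotients with respect to some support-degree increasing admissible order.

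Next, I would invoke Proposition \ref{Im} to conclude that $I\wedge\frakm$ again has linear quotients with respect to a support-degree increasing admissible order. The corollary then follows by identifying $I\wedge\frakm$ with $I_{\braket{d+1}}$: this identification was recorded explicitly in the preliminary remark that for any monomial ideal $J$ with $J=J_{\braket{d}}$, one has $J\wedge\frakm=J_{\braket{d+1}}$. Under our assumption, $I=I_{\braket{d}}$, so the identification applies.

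There is essentially no obstacle, since Proposition \ref{Im} has already absorbed all the combinatorial work. The only point worth checking explicitly is that each $u\wedge x_j$ with $u\in G(I)$ and $j\notin\supp(u)$ really has support-degree $d+1$, which is clear because $j$ adds a new coordinate to $\supp(u)$; and conversely, every generator of $I_{\braket{d+1}}$ arises in this way, which is visible from the description of $I\wedge\frakm$ as generated by such wedges. Thus the proof can be presented in two or three sentences.
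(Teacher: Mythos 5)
Your proof is correct and follows exactly the route the paper intends: the corollary is stated immediately after Proposition \ref{Im} with no separate argument, since every admissible order of an ideal whose minimal generators all share support-degree~$d$ is automatically support-degree increasing, Proposition~\ref{Im} then handles $I\wedge\frakm$, and the identification $I\wedge\frakm=I_{\braket{d+1}}$ is the remark recorded earlier in the paper for ideals with $I=I_{\braket{d}}$.
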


\begin{corollary}
    [{\cite[Corollary 2.12]{MR2557882}}]
    \label{facet-skeletons}
    The facet skeletons of shellable simplicial complexes are again shellable.
\end{corollary}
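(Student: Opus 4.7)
The plan is to use Alexander duality to translate the statement about facet skeletons into one about the wedge product $I_{\Delta^\vee} \wedge \frakm$, so that Proposition \ref{Im} can be applied, and then translate back. The main external ingredient is the correspondence (due to Soleyman Jahan and Zheng \cite{MR2557882} and recalled in the introduction) between squarefree monomial ideals with linear quotients and (nonpure) shellable simplicial complexes via Alexander duality: a complex $\Sigma$ on $[n]$ is shellable if and only if $I_{\Sigma^\vee}$ is a squarefree monomial ideal with linear quotients, with a shelling of $\Sigma$ providing an admissible order on $G(I_{\Sigma^\vee})$ and conversely.

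Given this, I would proceed as follows. Assume $\Delta$ is shellable, so $I_{\Delta^\vee}$ has linear quotients. Since $I_{\Delta^\vee}$ is squarefree, support-degree coincides with total degree on $G(I_{\Delta^\vee})$, and the degree-increasing admissible order guaranteed by \cite[Lemma 2.1]{MR2557882} is therefore automatically support-degree increasing. Proposition \ref{Im} then yields that $I_{\Delta^\vee} \wedge \frakm$ has linear quotients as well (with respect to a support-degree increasing admissible order). Each new generator has the form $x_v \cdot \bdx^{F^c}$ with $v \in F$, hence $v \notin \supp(\bdx^{F^c})$, so $I_{\Delta^\vee} \wedge \frakm$ remains squarefree. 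By Observation \ref{dual-observation}(e), this ideal is exactly $I_{(\Delta^{[1]})^\vee}$, so the correspondence above sends it back to the statement that $\Delta^{[1]}$ is shellable. Since $\Delta^{[i]}$ is defined recursively as the $1$-facet skeleton of $\Delta^{[i-1]}$, iterating the single-step argument delivers shellability of every facet skeleton.

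I do not foresee a substantial obstacle here: Proposition \ref{Im} already encodes all the combinatorial content of the result, and the only point that needs care is verifying that its hypothesis of a support-degree increasing admissible order is automatic in the squarefree setting, which is immediate from the identification of degree and support-degree for squarefree monomials. The only other point to mention is that the passage from $I_{\Delta^\vee}\wedge\frakm$ back to a shellable complex requires us to stay inside the squarefree category, and this is precisely what the observation that $x_v \notin \supp(\bdx^{F^c})$ guarantees.
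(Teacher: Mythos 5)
Your argument is correct and follows the paper's own proof essentially step for step: invoke the Alexander-duality correspondence between shellable complexes and squarefree ideals with linear quotients, note that degree-increasing admissible orders are automatically support-degree increasing in the squarefree case, apply Proposition \ref{Im}, and translate back via the identification $I_{(\Delta^{[1]})^\vee}=I_{\Delta^\vee}\wedge\frakm$. The small extra remarks you add (that $I_{\Delta^\vee}\wedge\frakm$ stays squarefree, and that one iterates for higher facet skeletons) are correct and make explicit what the paper leaves implicit.
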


\begin{proof}
    Soleyman Jahan and Zheng \cite{MR2557882} observed that a simplicial complex is shellable if and only it is the Eagon complex  of some squarefree ideal with linear quotients. They also showed that any monomial ideal with linear quotients has a degree-increasing admissible order.  
    
    Now, assume that the shellable simplicial complex $\Delta$ is the Eagon complex of the monomial ideal $I$. Since $I$ is squarefree, this degree-increasing admissible order is automatically support-degree increasing.  Thus, by Proposition \ref{Im}, $I\wedge \frakm$ has linear quotients.  Notice that $\Delta^{[1]}$ is the Eagon complex of $I\wedge \frakm$.  Thus, $\Delta^{[1]}$ is also shellable.
\end{proof}

\begin{theorem}
    [{cf.~\cite[Theorem 2.7]{MR2557882}}]
    \label{cpt-lq}
    If monomial ideal $I$ has linear quotients with respect to some support-degree increasing admissible order, then $I_{\braket{\ge d}}$ has linear quotients for all $d\in [n]$.
\end{theorem}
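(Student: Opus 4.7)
The plan is to construct an explicit support-degree increasing admissible order for $I^{(d)}$ in the spirit of Proposition \ref{Im}. Given the support-degree increasing admissible order $u_1, \ldots, u_m$ for $I$, I will parametrize candidates for $G(I^{(d)})$ by
\[
T_d = \{(i, J) : 1 \le i \le m,\ J \subset [n] \setminus \supp(u_i),\ |J| = \max(0, d - s_i)\},
\]
with $s_i = \suppdeg(u_i)$ and map $\phi(i, J) = u_i \bdx^J$. I order $T_d$ by $i$ first and lexicographically by $J$ within each $i$, then remove a pair $(i', J')$ whenever $\phi(i, J) \mid \phi(i', J')$ for some earlier $(i, J)$, producing a reduced index set $\widetilde{T}'_d$. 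Support-degree monotonicity is automatic because $\suppdeg(\phi(i, J)) = \max(d, s_i)$ is weakly increasing with $i$.

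The core of the proof is to verify admissibility for every pair $(i_1, J_1) \prec (i_2, J_2)$ in $\widetilde{T}'_d$. If $i_1 = i_2$, take $r$ to be the lex-first index at which $J_1$ undercuts $J_2$ and replace the corresponding entry of $J_2$ with $r$; the resulting pair $(i_2, K) \prec (i_2, J_2)$ satisfies $\braket{\phi(i_2, K)} : \phi(i_2, J_2) = \braket{x_r}$, which contains the required colon $\braket{\bdx^{J_1 \setminus J_2}}$. If $i_1 < i_2$, the original admissibility yields $q < i_2$ and $t$ with $\braket{u_q} : u_{i_2} = \braket{x_t} \supset \braket{u_{i_1}} : u_{i_2}$, and the identity $u_q = x_t \gcd(u_q, u_{i_2})$ together with the support-degree monotonicity forces $\supp(u_q) \subset \supp(u_{i_2}) \cup \{t\}$ and $s_q \le s_{i_2}$. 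In the sub-case $t \in \supp(u_{i_2})$, take $J_q$ to be a size-$(d - s_q)$ subset of $J_2 \cup (\supp(u_{i_2}) \setminus \supp(u_q))$; in the sub-case $t \notin \supp(u_{i_2}) \cup J_2$, take $J_q$ analogously while avoiding $t$. An exponent-by-exponent comparison verifies $\braket{\phi(q, J_q)} : \phi(i_2, J_2) = \braket{x_t}$, and this ideal contains the required colon because $\deg_t(u_{i_1}) > \deg_t(u_{i_2})$ and $t \notin J_1 \cup J_2$.

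The main obstacle is the residual sub-case $t \notin \supp(u_{i_2})$ and $t \in J_2$, which must be shown never to occur for $(i_2, J_2) \in \widetilde{T}'_d$. In that situation the pair $(q, J_q'') \in T_d$ with $J_q'' = (\supp(u_{i_2}) \setminus \supp(u_q)) \cup (J_2 \setminus \{t\})$ has size exactly $d - s_q$ (because $|\supp(u_q) \cap \supp(u_{i_2})| = s_q - 1$), is disjoint from $\supp(u_q)$, and satisfies $\phi(q, J_q'') \mid \phi(i_2, J_2)$ by a direct check on exponents; this would force $(i_2, J_2)$ to have been pruned, contradicting $(i_2, J_2) \in \widetilde{T}'_d$. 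The subtlest step throughout is the careful tracking of these support-size identities across all sub-cases.
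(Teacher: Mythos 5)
Your proposal is correct, and it takes a genuinely different route from the paper's. The paper reduces to the inductive step of raising the lowest support degree by one: it writes $I_{\braket{\ge a+1}}=\braket{u_1,\dots,u_s}\wedge\frakm+\braket{v_1,\dots,v_t}$, reuses the explicit admissible order $w_1,\dots,w_l$ for $\braket{u_1,\dots,u_s}\wedge\frakm$ produced in the proof of Proposition~\ref{Im}, and only has to check linear quotients at each $v_p$ (the checks at the $w_j$'s having already been done). You instead parametrize a generating set of $I_{\braket{\ge d}}$ directly for arbitrary $d$ by pairs $(i,J)$ with $|J|=\max(0,d-s_i)$, order these, prune by divisibility, and verify admissibility globally. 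Your construction subsumes the paper's Proposition~\ref{Im} as the case $d=a+1$ applied to an equigenerated ideal, rather than calling it as a black box, so your proof is longer but more uniform and self-contained; the paper's is shorter precisely because it leans on the one-step lemma. Both proofs yield the stronger conclusion (not just stated in the theorem) that $I_{\braket{\ge d}}$ has a support-degree increasing admissible order, which the paper needs to run the induction and which you get for free from $\suppdeg\phi(i,J)=\max(d,s_i)$ being weakly increasing in $i$.

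The ingredients you flag as subtle are indeed the crux, and they check out: the factorization $u_q=x_t\gcd(u_q,u_{i_2})$ forces $t\in\supp(u_q)$ and $\supp(u_q)\setminus\{t\}\subset\supp(u_{i_2})$, which with $s_q\le s_{i_2}$ gives the cardinality bookkeeping for building $J_q$ of exactly size $d-s_q$, and the residual case $t\notin\supp(u_{i_2})$, $t\in J_2$ is correctly killed by exhibiting an explicit earlier divisor $(q, J_q'')$ of $\phi(i_2,J_2)$. One small observation you should make explicit: the pair $(q,J_q)$ (or $(i_2,K)$ in the $i_1=i_2$ case) you produce lives in $T_d$ but may have been pruned. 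This is harmless because any pruning witness $(q',J')\in\widetilde{T}'_d$ for it satisfies $\phi(q',J')\mid\phi(q,J_q)$, hence $\braket{\phi(q',J')}:\phi(i_2,J_2)\supset\braket{x_t}$, and equality holds since otherwise $\phi(q',J')\mid\phi(i_2,J_2)$ and $(i_2,J_2)$ itself would have been pruned. This is the same step the paper performs in the proof of Proposition~\ref{Im} (the passage to ``some $(r,s)\in\widetilde{T}'$ such that $u_rx_s\mid u_qx_j$''), so it is a gap in exposition rather than in the argument.
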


\begin{proof}
    It suffices to consider the special case when $I$ has linear quotients with respect to a support-degree increasing admissible order $u_1,\dots,u_s,v_1,\dots,v_t$, where $\suppdeg(u_i)=a$ for all $i$ and $\suppdeg(v_j)\ge a+1$ for all $j$. Now we show that $I_{\braket{\ge a+1}}=\braket{u_1,\dots,u_s}\wedge \frakm + \braket{v_1,\dots,v_t}$ has linear quotients with respect to some support-degree increasing admissible order.

    Let $w_1,\dots,w_l$ be the support-degree increasing admissible order for $\braket{u_1,\dots,u_s}\wedge \frakm$, as constructed for Proposition \ref{Im}. Then $I_{\braket{\ge a+1}}$ is minimally generated by $w_1,\dots,w_l,v_1,\dots,v_t$. We only need to show that $\braket{w_1,\dots,w_l,v_1,\dots,v_{p-1}}:v_p$ is generated by some monomials of degree $1$ for $1\le p\le t$.

    We have two cases. First, we consider $\braket{v_j}:v_p$ with $j<p$.  Since $u_1,\dots,u_s,v_1,\dots,v_p$ is an admissible order, there is some $u\in \Set{u_1,\dots,u_s,v_1,\dots,v_{p-1}}$ and some $d\in[n]$ such that $\braket{u}:v_p=\braket{x_d}\supset \braket{v_j}:v_p$. If $u\in \Set{v_1,\dots,v_{p-1}}$, we are done.  Thus, we may assume that $u\in \Set{u_1,\dots,u_s}$. As $\braket{u}:v_p=\braket{x_d}$, $x_dv_p=uf$ for some monomial $f\in S$.  Obviously $d\notin \supp(f)$. Hence $d\in \supp(u)$ and $u/x_d\in S$.  Now $f=\frac{v_p}{u/x_d}$. As $\suppdeg(v_p)>\supp(u)\ge \supp(u/x_d)$, we can find some $c\in \supp(v_p)\setminus \supp(u)$.  As $d\in \supp(u)$, we have $d\ne c$. Now, by the choice of $w_1,\dots,w_l$, we have some $w_k$ that divides $ux_c$. We claim that $\braket{w_k}:v_p=\braket{x_d}\supset \braket{v_j}:v_p$. The argument for this claim is analogous to that for \eqref{clm-1} in the proof for Proposition \ref{Im}. There is no need to repeat here.

    Next, we consider $\braket{w_j}:v_p$. This $w_j$ equals $u_ix_j$ for suitable $i$ and $j$. As $u_1,\dots,u_s,v_1,\dots,v_t$ is an admissible order, there is some $u\in \Set{u_1,\dots,u_s,v_1,\dots,v_{p-1}}$ and some $d\in [n]$ such that $\braket{u}:v_p=\braket{x_d}\supset \braket{u_i}:v_p$.  Notice that $\braket{u_i}:v_p \supset \braket{w_j=u_ix_j}:v_p$. Now, if $u\in \Set{v_1,\dots,v_p}$, we are done. Hence, we may assume that $u\in \Set{u_1,\dots,u_s}$.  Again, we are able to find some $c\in \supp(v_p)\setminus \supp(u)$ with $d\ne c$ and some $w_k$ that divides $ux_c$. Analogously, we will have $\braket{w_k}:v_p=\braket{x_d}\supset \braket{w_j}:v_p$. And this completes the proof.
\end{proof}

Support-linear modules are componentwise support-linear by \cite[Lemma 2.1]{MR2561856}.  We have a similar result for linear quotients property.

\begin{corollary}
    [{cf.~\cite[Corollary 2.8]{MR2557882}}]
    \label{cpt-sl}
    Let $I\subset S$ be a monomial ideal. If $I$ has linear quotients with respect to some support-degree increasing admissible order, then $I$ has support-componentwise linear quotients and componentwise support-linearity.
\end{corollary}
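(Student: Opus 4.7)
My plan is to deduce both properties in one stroke from Theorem~\ref{cpt-lq}, by observing that the admissible order it produces for $I_{\braket{\ge d}}$ is already support-degree increasing (this is visible from the construction in its proof, which glues a support-degree increasing order for $\braket{u_1,\dots,u_s}\wedge\frakm$ — guaranteed by Proposition~\ref{Im} — to the remaining higher-support generators, which were already ordered by support-degree in the input). Once this is in hand, I can peel off the initial piece of the order and work component by component.

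First I would fix $d\in[n]$ and let $v_1,\dots,v_r$ be such a support-degree increasing admissible order of $I_{\braket{\ge d}}$. Then I would single out the initial segment $v_1,\dots,v_s$ consisting of generators of support-degree exactly $d$. The key identification is $I_{\braket{d}}=\braket{v_1,\dots,v_s}$. The inclusion $\supset$ is immediate since each $v_j$ with $j\le s$ is a monomial of $I$ with $\suppdeg(v_j)=d$. For the reverse inclusion, I would argue that every minimal generator $w$ of $I_{\braket{d}}$ is in fact a minimal generator of $I_{\braket{\ge d}}$: any divisor $w'\ne w$ of $w$ lying in $I_{\braket{\ge d}}$ would satisfy $\supp(w')\subset\supp(w)$, so $\suppdeg(w')\le d$ and actually $\suppdeg(w')=d$; but then $w'\in I_{\braket{d}}$ divides $w$, contradicting minimality of $w$. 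Hence $w=v_j$ for some $j$, and $\suppdeg(v_j)=d$ forces $j\le s$.

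Once this identification is in place, both conclusions follow painlessly. Any initial segment of an admissible order is an admissible order for the ideal it generates, so $v_1,\dots,v_s$ is an admissible order for $I_{\braket{d}}$, yielding linear quotients; letting $d$ range over $[n]$ delivers support-componentwise linear quotients. Moreover every $v_j$ in this segment has support-degree exactly $d$, so Corollary~\ref{lq-d} makes $I_{\braket{d}}$ into a $d$-support-linear ideal, giving componentwise support-linearity.

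The main obstacle is really the bookkeeping step identifying $G(I_{\braket{d}})$ with the support-degree-$d$ part of $G(I_{\braket{\ge d}})$; once that is pinned down, everything else is formal and appeals only to results already established — chiefly Theorem~\ref{cpt-lq} for the existence of the good admissible order on $I_{\braket{\ge d}}$ and Corollary~\ref{lq-d} for upgrading linear quotients to support-linearity on a uniformly support-graded piece.
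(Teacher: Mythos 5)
Your proof is correct and follows essentially the same route as the paper: extract the support-degree increasing admissible order that the proof of Theorem~\ref{cpt-lq} constructs for $I_{\braket{\ge d}}$, note its initial segment generates $I_{\braket{d}}$ (your bookkeeping on $G(I_{\braket{d}})$ is the honest justification the paper leaves implicit), and then apply Corollary~\ref{lq-d} for support-linearity. The paper's proof is merely a terser statement of this same argument.
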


\begin{proof}
    The first part follows from the proof for Theorem \ref{cpt-lq}.  The second part follows from the first part together with  Corollary
    \ref{lq-d}.
\end{proof}

\begin{corollary}
    [{\cite[Theorem 2.9]{MR1333388}}]
    \label{skeletons}
    If $\Delta$ is shellable, then all its skeletons $\Delta^{(r,s)}$ are shellable as well. .
\end{corollary}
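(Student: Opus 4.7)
The plan is to identify the Stanley-Reisner ideal of the Alexander dual of $\Delta^{(r,s)}$ in terms of $I_{\Delta^\vee}$ via Observation~\ref{dual-observation}(b), and then feed the result into Theorem~\ref{cpt-lq}.

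First, since $\Delta$ is shellable, the Stanley-Reisner ideal $I:=I_{\Delta^\vee}$ is squarefree with linear quotients, as recalled in the proof of Corollary~\ref{facet-skeletons}. I will fix a degree-increasing admissible order $u_1,\dots,u_m$ of $G(I)$; since $I$ is squarefree, degree coincides with support-degree, so this order is automatically support-degree increasing. Because the order is degree-increasing, the generators with $\deg(u_i)\le n-r$ form an initial segment $u_1,\dots,u_k$. Setting $J:=\braket{u_1,\dots,u_k}$, the truncated sequence $u_1,\dots,u_k$ remains admissible for $J$ (the colon ideals $\braket{u_1,\dots,u_{i-1}}:u_i$ for $i\le k$ are unchanged), and it is still support-degree increasing. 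Theorem~\ref{cpt-lq} then yields that $J_{\braket{\ge n-s}}$ has linear quotients.

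Next, the plan is to identify $J_{\braket{\ge n-s}}$ with $I_{(\Delta^{(r,s)})^\vee}$. By Observation~\ref{dual-observation}(b), $I_{(\Delta^{(r,s)})^\vee}$ is generated by the squarefree monomials $f$ of degree $\ge n-s$ that lie in $\braket{u}$ for some $u\in G(I)$ with $\deg(u)\le n-r$; equivalently, the squarefree monomials of degree $\ge n-s$ lying in $J$. In the other direction, since $J$ is squarefree, every monomial $f\in J$ with $\suppdeg(f)=j$ is divisible by a squarefree monomial of the same support-degree in $J$: if $u\in G(J)$ divides $f$, then $u\cdot\prod_{i\in\supp(f)\setminus\supp(u)}x_i$ is such a squarefree monomial. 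Hence $J_{\braket{\ge n-s}}$ is generated by squarefree monomials of (support-)degree $\ge n-s$ in $J$, and the two descriptions coincide.

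Putting things together, $I_{(\Delta^{(r,s)})^\vee}$ is a squarefree monomial ideal with linear quotients, so its Eagon complex, namely $\Delta^{(r,s)}$ itself, is shellable. The main obstacle is the squarefree identification of $J_{\braket{\ge n-s}}$ with the ideal from Observation~\ref{dual-observation}(b); once that bookkeeping is in place, the substantive content has already been supplied by Theorem~\ref{cpt-lq}.
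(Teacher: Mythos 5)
Your proof is correct and takes essentially the same route the paper intends: the paper's own ``proof'' is the one-line remark that it is similar to Corollary~\ref{facet-skeletons}, and you have simply spelled out that dualization argument in full --- identifying $I_{(\Delta^{(r,s)})^\vee}$ via Observation~\ref{dual-observation}(b), truncating the degree-increasing (hence support-degree increasing, since $I$ is squarefree) admissible order to the generators of degree $\le n-r$, and then invoking Theorem~\ref{cpt-lq} on the resulting subideal.
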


The proof is similar to that for Corollary \ref{facet-skeletons}.

\begin{remark}
    We observe that
    \begin{enumerate}[a]
        \item ideals with linear quotients in general do not have any support-degree increasing admissible order, and
        \item ideals with linear quotients in general are not componentwise support-linear.
    \end{enumerate}
    For instance, the ideal $I=\braket{a^2b,abc,bcd,cd^2}$ has linear quotients. But $I_{\braket{2}}=\braket{a^2b,cd^2}$ does not have linear quotients.  Furthermore, $I_{\braket{2}}$ is not $2$-support-linear. Thus, the requirement that \textit{$I$ has a support-degree increasing admissible order} in our results is essential.
\end{remark}

\begin{remark}
    When $I$ is a monomial ideal with linear quotients and support-componentwise linear quotients, it might still have no support-degree increasing admissible order. The ideal
    \[
        I=\braket{bc,abd^2,b^3d^2,cd,ac,c2,a^2bd}
    \]
    provides such an example. It has linear quotients with respect to the given order. All of the components $I_{\braket{1}}=\braket{c^2}$, $I_{\braket{2}}=\braket{bc,cd,ac,b^3d^2}$ and $I_{\braket{3}}=\braket{abc,acd,bcd,abd^2,a^2bd}$ also have linear quotients.  On the other hand, $I$ has no support-degree increasing admissible order.  It is worth noting that in any such an example, the ideal $I$ is not squarefree by \cite[Lemma 2.1]{MR2557882}.
\end{remark}

\begin{proposition}
    [{cf.~\cite[Proposition 2.9]{MR2557882}}]
    \label{Pack}
    Let $I$ be a monomial ideal with support-componentwise linear quotients. Assume that for each component $I_{\braket{d}}$ there exists an admissible order $\sigma_d$ of $G(I_{\braket{d}})$ with the property that the elements of $G(I_{\braket{d}}\wedge \frakm)$ form the initial part of $\sigma_{d+1}$. Then $I$ has linear quotients with respect to a support-degree increasing admissible order.
\end{proposition}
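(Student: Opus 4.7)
The plan is to construct a support-degree increasing order of $G(I)$ by concatenating the admissible orders $\sigma_1,\sigma_2,\dots$ with their initial pieces deleted, and then to verify admissibility by transporting the admissible structure of each $\sigma_d$ back to this concatenation. By the hypothesis, $G(I_{\braket{d-1}}\wedge\frakm)\subseteq G(I_{\braket{d}})$ for every $d\geq 1$, and any $u\in G(I_{\braket{d-1}}\wedge\frakm)$ is a proper multiple of some $v\in G(I_{\braket{d-1}})\subseteq I$, hence cannot itself lie in $G(I)$. Conversely, if $u\in G(I_{\braket{d}})\setminus G(I)$, then $u$ is a proper multiple of some $w\in G(I)$ with $\suppdeg(w)<d$; picking $d-\suppdeg(w)$ variables in $\supp(u)\setminus\supp(w)$ and wedging the last one onto the product of $w$ with the first $d-\suppdeg(w)-1$ of them yields a divisor of $u$ lying in $I_{\braket{d-1}}\wedge\frakm$, which by minimality of $u$ in $I_{\braket{d}}$ must equal $u$. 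This produces the disjoint decomposition
\[
G(I_{\braket{d}})=\bigl(G(I)\cap\{\suppdeg=d\}\bigr)\sqcup G(I_{\braket{d-1}}\wedge\frakm),
\]
so that deleting the initial segment $G(I_{\braket{d-1}}\wedge\frakm)$ from $\sigma_d$ leaves an order $\tau^{(d)}$ of exactly the minimal generators of $I$ of support-degree $d$. The concatenation $\tau:=\tau^{(1)},\tau^{(2)},\dots$ is therefore an order of $G(I)$ that is manifestly support-degree increasing.

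To verify admissibility of $\tau$, fix $u\in\tau^{(d)}$ and $v\in\tau$ preceding $u$, and let $K$ be the ideal generated by the elements of $\sigma_d$ strictly preceding $u$. The first and main step is to show $\braket{v}:u\subseteq K:u$. The case $\suppdeg(v)=d$ is immediate since then $v\in K$. Otherwise, with $s:=\suppdeg(v)<d$, choose distinct $x_{i_1},\dots,x_{i_{d-s}}\in\supp(u)\setminus\supp(v)$ and set $v'':=v\,x_{i_1}\cdots x_{i_{d-s}}$. Then $v''\in I_{\braket{d-1}}\wedge\frakm\subseteq K$, as $v''$ is obtained by wedging $x_{i_{d-s}}$ onto $v\,x_{i_1}\cdots x_{i_{d-s-1}}\in I_{\braket{d-1}}$. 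An exponent-by-exponent comparison then gives $v''/\gcd(v'',u)=v/\gcd(v,u)$, because each extra $x_{i_j}$ contributes exponent $1$ at a variable already present in $u$. Hence $\braket{v}:u=\braket{v''}:u\subseteq K:u$. Admissibility of $\sigma_d$, applied to $u$ and to any minimal generator of $K$ dividing $v''$, now furnishes some $u_k\in G(I_{\braket{d}})$ preceding $u$ in $\sigma_d$ together with a variable $x_d$ such that $\braket{u_k}:u=\braket{x_d}\supseteq\braket{v}:u$.

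It remains to lift $u_k$ to a witness inside $\tau$. If $u_k\in G(I)$, then $u_k\in\tau^{(d)}$ already precedes $u$ and we are done. Otherwise $u_k=v^\flat x_i\in G(I_{\braket{d-1}}\wedge\frakm)$ with $v^\flat\in G(I_{\braket{d-1}})$ and $x_i\notin\supp(v^\flat)$. Since $u_k/\gcd(u_k,u)=x_d$, we must have either $x_d=x_i$ or $x_d\in\supp(v^\flat)$. In the first case $v^\flat\mid u$; because $\suppdeg(v^\flat)<\suppdeg(u)$, some element of $G(I)$ distinct from $u$ then divides $u$, contradicting $u\in G(I)$. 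Thus $x_d\neq x_i$, and the analogous exponent comparison yields $v^\flat/\gcd(v^\flat,u)=x_d$. If $v^\flat\in G(I)$, set $u'=v^\flat$; otherwise pick any $w\in G(I)$ properly dividing $v^\flat$ (necessarily with $\suppdeg(w)<d-1$), and note that $\braket{w}:u=\braket{w/\gcd(w,u)}$ is principal with generator dividing $x_d$, so the generator is either $1$ (forcing $u\in\braket{w}$, contradicting $u\in G(I)$) or $x_d$ (yielding $u'=w$). The main obstacle is the exponent identity $v''/\gcd(v'',u)=v/\gcd(v,u)$, which is what allows the admissibility of $\sigma_d$ to control generators of $I$ of smaller support-degree; excluding the degenerate case $x_d=x_i$ is then a short consistency check leveraging $u\in G(I)$.
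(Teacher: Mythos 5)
Your proof is correct and follows essentially the same route as the paper: construct the support-degree increasing order by deleting the initial segments $G(I_{\braket{d-1}}\wedge\frakm)$ from each $\sigma_d$, pass a colon question about a generator $v$ of smaller support-degree to one about $v''=v\,x_{i_1}\cdots x_{i_{d-s}}$ inside $\sigma_d$ via the identity $v''/\gcd(v'',u)=v/\gcd(v,u)$, and then descend from the resulting witness in $\sigma_d$ back to a witness in $G(I)$. The paper's descent step is slightly more direct---having written the witness as $m'u_i$ with $u_i\in G(I)$ and $\braket{m'u_i}:u_p=\braket{x_d}$, it simply observes that $\braket{u_i}:u_p$ is a proper principal ideal containing $\braket{x_d}$ and hence equals it, bypassing your case analysis on whether $x_d=x_i$---but otherwise the two arguments coincide.
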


\begin{proof}
    The monomials $u_1,\dots,u_s$ in $G(I)$ can be ordered such that $i<j$ if $\suppdeg(u_i)<\suppdeg(u_j)$ or $\suppdeg(u_i)=\suppdeg(u_j)=d$ and $u_i$ comes before $u_j$ in $\sigma_d$. This order is clearly support-degree increasing.  We claim that $I$ has linear quotients with respect to this order.

    It suffices to show that the colon ideal $\braket{u_1,\dots,u_{p-1}}:u_p$ is generated by variables. We may assume that $p>1$ and $\suppdeg(u_1)<\suppdeg(u_p)=d$. Let $l<p$ be the largest index such that $\suppdeg(u_l)<d$. Then we have an admissible order $w_1,\dots,w_t,u_{l+1},\dots,u_p$ where $w_1,\dots,w_t\in G(I_{\braket{d-1}}\wedge \frakm)$.

    Let $j<p$. Since $\suppdeg(u_j)\le \suppdeg(u_p)$, we can find suitable monomial $m$ such that $\deg(m)=\suppdeg(m)=\suppdeg(u_p)-\suppdeg(u_j)$ and $\supp(m)\subset \supp(u_p)\setminus \supp(u_j)$. Therefore, $\suppdeg(mu_j)=\suppdeg(u_p)$ and $\braket{mu_j}:u_p=\braket{u_j}:u_p$.
    This $m$ is a product of distinct variables. Hence $mu_j\in \Set{w_1,\dots,w_t,u_{l+1},\dots,u_{p-1}}$. Now, we can find suitable $w\in\Set{w_1,\dots,w_t,u_{l+1},\dots,u_{p-1}}$ and $d\in [n]$ such that $\braket{w}:u_p=\braket{x_d}\supset \braket{mu_j}:u_p=\braket{u_j}:u_p$.

    There is nothing to show when $w\in \Set{u_{l+1},\dots,u_{p-1}}$. Thus, we may assume that $w\in \Set{w_1,\dots,w_t}$. In this case, $w=m'u_i$ for some $i\le l$ and some monomial $m'$. As  $\braket{u_i}:u_p\ne S$ is a principal monomial ideal containing $\braket{m'u_i}:u_p=\braket{x_d}$, we must have $\braket{u_i}:u_p=\braket{x_d}\supset \braket{x_j}:u_p$.  This completes the proof.
\end{proof}

\begin{example}
The compatibility requirement in Proposition \ref{Pack} is essential.
Let 
\[
I=\braket{x_2^4,x_1x_2^3,x_2^3x_3,x_1^2x_2x_3}\subset \KK[x_1,x_2,x_3].
\]
 Then $I$ has support-componentwise linear quotients. But $I$ does not have linear quotients. 
\end{example}

The following question was implicitly asked in \cite{MR2557882} and remains open so far.

\begin{question}
Do monomial ideals with componentwise linear quotients have linear quotients? 
\end{question}

\section{Skeletons and other classes of monomial ideals}
In this section, we are concerned with the following types of questions:

\begin{question}
    \label{Skeleton}
    \begin{enumerate}[a]
        \item If $I$ is a monomial ideal with property $\calP$, do $\frakm I$ and $\frakm \wedge I$ also have this property?
        \item\label{ques-b} If $\Delta$ is a simplicial complex with property $\calP$, do all the skeletons of $\Delta$ also have this property?
    \end{enumerate}
\end{question}

We will study these questions with respect to the weakly $I$-stable ideals, weakly polymatroidal ideals and vertex decomposable complexes. They are all related to the ideals with linear quotients and shellable complexes that we investigated in the previous section.

\subsection{$I$-stable ideals}

Fix an irreducible monomial ideal $I$. For each monomial ideal $u\in S$, let $\max(u)=\max(\Supp(u))$ with $\max(1)=-\infty$.
 When $u\ne 1$, we also set $u'=u/x_{\max(u)}$.

\begin{definition}
    [{cf.~\cite[Definitions 2.1--2.3]{MR1921814}}]
    \label{IS} 
    Let $J$ be a monomial ideal in $S$ with $G(J)\cap I=\emptyset$.
    \begin{enumerate}[a]
        \item $J$ is said to be \Index{$I$-stable} if the following condition holds for each monomial $u\in J\setminus I$:
            \begin{itemize}
                \item[(IS):] for each $j<\max(u)$, there exists $i\in \supp(u)$ with $i>j$ and $ux_j/x_i\in J+I$.
            \end{itemize}
        \item $J$ is said to be \Index{weakly $I$-stable} if the following condition holds for each monomial $u\in J\setminus I$:
            \begin{itemize}
                \item[(WIS):] for each $j<\max(u')$, there exists $i\in \supp(u)$ with $i>j$ and $ux_j/x_i\in J+I$.
            \end{itemize}
        \item $J$ is said to be \Index{strongly $I$-stable} if the following condition holds for each monomial $u\in J\setminus I$:
            \begin{itemize}
                \item[(SIS):] $ux_j/x_i\in J+I$ for every $i\in \supp(u)$ and $j<i$.
            \end{itemize}
    \end{enumerate}
\end{definition}

Definitions above are generalizations of those in \cite{MR1037391}, \cite{MR1618728} and \cite{MR1218500}.  Although these definitions work for general monomial ideal $I$, the mostly interesting cases happen when $I$ is irreducible, cf.~\cite{MR1921814}.  Notice that \Index{weakly stable} property was previously introduced for squarefree monomial ideals only. This corresponds to the weakly $I$-stable case when $I=\braket{x_1^2,\dots,x_n^2}$. Similarly, a squarefree monomial ideal will be called \Index{squarefree stable} (resp.~\Index{squarefree strongly stable}) if it is $I$-stable (resp.~strongly $I$-stable) for this choice of $I$. 
The implications
\[
    \text{strongly $I$-stable} \implies \text{$I$-stable} \implies \text{weakly $I$-stable}
\]
are obvious. 

For every monomial ideal $J$, we call $\std_I(J):=\braket{u : u\in G(J)\setminus I}$ the \Index{standard form} of $J$ with respect to $I$. Obviously it is the unique minimal monomial ideal $K$ such that $K+I=J+I$.

\begin{lemma}
    [{\cite[Lemma 2.7]{MR1921814}}]
    Let $J=\std_I(J)$ be a monomial ideal. Then $J$ is $I$-stable (reps.~weakly $I$ stable, strongly $I$-stable) if and only if each $u\in G(J)$ satisfies the condition (IS) (reps.~(WIS), (SIS)) in Definition \ref{IS}.
\end{lemma}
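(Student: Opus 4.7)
The ``only if'' direction is immediate: since $J = \std_I(J)$ forces $G(J)\cap I = \emptyset$, each $u\in G(J)$ already lies in $J\setminus I$, so the blanket hypothesis in Definition \ref{IS} restricts to the stated condition on generators. The substantive content is the converse, which I plan to prove by taking an arbitrary $u\in J\setminus I$, writing $u = vy$ with $v\in G(J)$ and $y\in\Mon(S)$ (possible since $u\in J$), and transferring the relevant stability condition from $v$ (which is assumed by hypothesis) up to $u$ by multiplication.

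I would handle the strongly $I$-stable case first because it is the cleanest. Given $i\in\supp(u) = \supp(v)\cup\supp(y)$ and $j<i$: if $i\in\supp(v)$, then (SIS) on $v$ gives $vx_j/x_i\in J+I$, and multiplying by $y$ yields $ux_j/x_i\in J+I$ (note $J+I$ is an ideal); if $i\in\supp(y)\setminus\supp(v)$, then $x_i\mid y$ and $ux_j/x_i = v\cdot(yx_j/x_i)\in vS\subseteq J$. The $I$-stable case runs along the same template: for $j<\max(u)$, split on $j<\max(v)$ (apply (IS) to $v$ and multiply by $y$) versus $\max(v)\le j<\max(u)$ (which forces $\max(u)\in\supp(y)$, so the choice $i = \max(u)$ works via the second bullet).

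The weakly $I$-stable case is where I expect the genuine bookkeeping, because the condition is phrased in terms of $\max(u')$ rather than $\max(u)$, and $\max(v')$ need not equal $\max(u')$. The plan is: for $j<\max(u')$, first dispose of $j<\max(v')$ via (WIS) on $v$ exactly as above. In the residual range $\max(v')\le j<\max(u')$, I would perform a case analysis by comparing $\max(u)$ with $\max(v)$. If $\max(u)>\max(v)$ then $x_{\max(u)}\mid y$ and $i = \max(u)$ works. If $\max(u) = \max(v) =: M$, the residual constraint $\max(v')\le j<\max(u')\le M$ forces $\max(v')<M$ and hence $\deg_{x_M}(v) = 1$; then either $\deg_{x_M}(u)\ge 2$, in which case the excess comes from $y$ so $x_M\mid y$ and $i = M$ works, or $\deg_{x_M}(u) = 1$, in which case $u' = v'y$ and the inequality $\max(v')\le j<\max(u')$ forces $\max(u') = \max(y)>\max(v')$, making $i = \max(y)\in\supp(y)$ the required index.

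The main obstacle is navigating this last dichotomy cleanly, in particular ruling out configurations where neither (WIS) on $v$ nor a factor of $y$ delivers an admissible $i$. The key technical observation that makes it go through is that in every residual subcase the chosen $i$ lies in $\supp(y)$, so $ux_j/x_i = v\cdot(yx_j/x_i)$ lands automatically in $vS\subseteq J$, with no recourse to membership in $I$.
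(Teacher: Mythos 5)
The paper itself does not give a proof of this lemma; it is stated purely as a citation to Shakin's Lemma~2.7 in \cite{MR1921814}, with no argument supplied in the text, so there is no in-paper proof to compare against. Your argument is a correct, self-contained proof. The strategy --- factor an arbitrary $u\in J\setminus I$ as $u=vy$ with $v\in G(J)$ (which lies in $J\setminus I$ because $J=\std_I(J)$ forces $G(J)\cap I=\emptyset$) and transfer the stability condition from $v$, with a split on whether the required index $i$ is produced by $\supp(v)$ or must come from $\supp(y)$ --- is the natural one and is almost certainly what Shakin's original argument does. The (SIS) and (IS) cases are unproblematic. In the (WIS) case your dichotomy is sound; the one implicit step worth surfacing is that $\max(v')\le\max(u')$ whenever $v\mid u$, so that the residual interval $\max(v')\le j<\max(u')$ is genuinely contained in the target range $j<\max(u')$. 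This follows case-by-case: if $\max(v)<\max(u)$ then $x_{\max(u)}\nmid v$ gives $v\mid u'$, hence $\max(v')\le\max(v)\le\max(u')$; if $\max(v)=\max(u)=M$ and $\deg_{x_M}(v)\ge 2$ then $\max(v')=M=\max(u')$; and if $\deg_{x_M}(v)=1$ you land in exactly the two sub-cases you treat. Your key observation that in every residual sub-case the chosen $i$ lies in $\supp(y)$, so $ux_j/x_i=v\cdot(yx_j/x_i)\in vS\subseteq J$ without ever invoking membership in $I$, is precisely what makes the argument close.
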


We will use the following term order $\prec$ on $\Mon(S)$ throughout this subsection: $\bdx^\bda \prec \bdx^\bdb$ if and only if $\deg(\bdx^\bda)<\deg(\bdx^\bdb)$ or $\deg(\bdx^\bda)=\deg(\bdx^\bdb)$ and there exists some $s\in [n]$ such that $\bda(k)=\bdb(k)$ for all $s+1\le k \le n$, but $\bda(s)<\bdb(s)$. This order is closely related to the $I$-stable ideals.

\begin{lemma}
    \label{WIS-deg}
    Suppose that $J=\std_I(J)$ is a monomial ideal and $G(J)=\Set{u_1,\dots,u_s}$ with $u_1\prec u_2\prec \cdots \prec u_s$.  If $J$ is $I$-stable (resp.~weakly $I$ stable, strongly $I$-stable), then for each $k\in [s]$, the ideal $J_k=\braket{u_1,u_2,\dots,u_k}$ is also $I$-stable (resp.~weakly $I$ stable, strongly $I$-stable).
\end{lemma}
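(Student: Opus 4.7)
The plan is to apply the cited Lemma~2.7 of \cite{MR1921814}, which reduces verifying (weak/strong) $I$-stability to checking the relevant exchange condition on each minimal generator. The proof will then proceed uniformly for all three flavors of stability, with essentially the same structural argument in each case.

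First, I would observe that since $u_1,\dots,u_k$ are among the minimal monomial generators of $J$, no $u_i$ divides another $u_j$ with $i\neq j$; hence $G(J_k)=\{u_1,\dots,u_k\}$.  Moreover, because $J=\std_I(J)$, every $u_i\notin I$, so $G(J_k)\cap I=\emptyset$ and therefore $J_k=\std_I(J_k)$.  This puts us in the setting where the cited Lemma~2.7 applies to $J_k$, so it suffices to show that each $u_i\in G(J_k)$ satisfies the appropriate exchange condition (IS), (WIS), or (SIS) with $J+I$ replaced by $J_k+I$.

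Fix $u_i$ (with $i\le k$) and a permissible index $j$ (i.e.\ $j<\max(u_i)$, $j<\max(u_i')$, or simply $j<l$ for some $l\in\supp(u_i)$, depending on which of (IS), (WIS), (SIS) we are verifying). The hypothesis that $J$ is $I$-stable (resp.\ weakly, strongly) provides an index $l\in\supp(u_i)$ with $l>j$ and $u_ix_j/x_l\in J+I$.  If $u_ix_j/x_l\in I$, there is nothing left to do.  Otherwise some $u_t\in G(J)$ divides $u_ix_j/x_l$; it is enough to prove $t\le k$, since then $u_ix_j/x_l\in J_k+I$.

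The main calculation — and the only real content — is a comparison with respect to the order $\prec$.  Since $u_t\mid u_ix_j/x_l$ we have $\deg(u_t)\le\deg(u_i)$.  If the inequality is strict then $u_t\prec u_i$, so $t<i\le k$.  If equality holds, then $u_t=u_ix_j/x_l$.  In the subcase $j=l$ we get $u_t=u_i$, so $t=i\le k$; in the subcase $j\neq l$ the exponent vectors of $u_t$ and $u_i$ agree in every coordinate strictly greater than $l$, while $u_t(l)=u_i(l)-1<u_i(l)$, so by the definition of $\prec$ we again obtain $u_t\prec u_i$ and hence $t<i\le k$.  In all cases $u_ix_j/x_l\in J_k+I$, which is what we needed.

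There is no serious obstacle: the three cases differ only in the range of permissible $j$ (and in (SIS) the trivial constraint that any $l>j$ with $l\in\supp(u_i)$ works), while the order-theoretic step that forces $t\le k$ is identical.  I would therefore write the three statements in parallel, singling out the comparison $u_t\prec u_i$ as the key lemma and noting that the order $\prec$ has been engineered precisely to make this exchange argument go through.
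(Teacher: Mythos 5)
Your proof is correct and is the standard argument: reduce to generators via the cited Lemma~2.7, use $I$-stability of $J$ to place $u_ix_j/x_l$ in $J+I$, and then observe that any $u_t\in G(J)$ dividing $u_ix_j/x_l$ satisfies $u_t\prec u_i$ under the degree-then-rightmost-coordinate order (since $j<l$ forces $\deg(u_t)<\deg(u_i)$ or $u_t(l)<u_i(l)$ with agreement above $l$), so $t<i\le k$. This matches the approach the paper attributes to Shakin's Lemma~7.1 and extends uniformly to (IS) and (SIS); the only quibble is that your ``subcase $j=l$'' is vacuous because the exchange condition always gives $l>j$.
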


For the weakly $I$-stable case, it is \cite[Lemma 7.1]{MR1921814}. Its proof also works for the $I$-stable and strongly $I$-stable cases.

\begin{proposition}
    \label{WIS}
    Suppose that $J$ is a monomial ideal. If $J$ is $I$-stable (resp.~weakly $I$ stable, strongly $I$-stable), then $\std_I(\frakm J)$ is also $I$-stable (reps.~weakly $I$ stable, strongly $I$-stable).
\end{proposition}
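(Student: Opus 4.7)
My plan is to verify each of the three stability conditions directly on the minimal generators of $\std_I(\frakm J)$, invoking the criterion \cite[Lemma 2.7]{MR1921814} recalled above. Every minimal generator $v$ of $\std_I(\frakm J)$ is also a minimal generator of $\frakm J$, so it has the form $v = x_k u$ for some $u \in G(J)$ and some $k \in [n]$; moreover $v \notin I$ forces $u \notin I$, since $I$ is an ideal. Thus $u \in G(J) \setminus I$, and the stability hypothesis on $J$ applies to $u$.

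The central observation I would exploit is the tautological identity
\[
    v x_j/x_k \;=\; x_j u \;\in\; \frakm J \;\subset\; \frakm J + I,
\]
valid for every $j \in [n]$. Consequently, whenever $k > j$, the choice $i = k \in \supp(v)$ already provides the required exchange on the spot. The bulk of the proof then reduces to the complementary range $k \le j$, where the stability of $u$ itself has to be brought in.

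I would handle the three variants in sequence. The strongly $I$-stable case falls out immediately: for any $i \in \supp(v)$ with $j < i$, either $i = k$ (handled by the tautology) or $i \in \supp(u)$, and then (SIS) on $u$ gives $u x_j/x_i \in J + I$, so $v x_j/x_i = x_k(u x_j/x_i) \in \frakm J + I$. For the $I$-stable and weakly $I$-stable cases, I would set $m = \max(u)$, $m' = \max(u')$ and compare $\max(v) = \max(k, m)$ and $\max(v')$ across the subcases $k > m$, $k = m$, and $k < m$ (the last further split according to whether $u' = 1$, $k > m'$, or $k \le m'$). A routine check shows that the constraint $k \le j < \max(v)$ forces $j < m$, while the constraint $k \le j < \max(v')$ forces $k < m$, $u' \ne 1$, $k \le m'$, and $j < m'$. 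In either situation, applying (IS) (resp.~(WIS)) to $u$ supplies $i \in \supp(u) \subset \supp(v)$ with $i > j$ and $u x_j/x_i \in J + I$; multiplying by $x_k$ then yields $v x_j/x_i \in \frakm J + I$, completing the verification.

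The only genuinely delicate step is this last case analysis for the weakly $I$-stable version: I will have to rule out the subcases $k \ge m$, as well as $k < m$ with either $u' = 1$ or $k > m'$, by showing that each of them forces $\max(v') \le k$ and thereby contradicts $k \le j < \max(v')$. The remainder is mechanical bookkeeping, and the $I$-stable and strongly $I$-stable versions are strictly easier.
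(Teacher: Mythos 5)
Your proof is correct, and it is genuinely different from the one in the paper. The paper proves the statement by ordering $G(J)$ under a degree-compatible term order $\prec$, invoking Lemma \ref{WIS-deg} to deduce that each initial-segment ideal $J_k=\braket{u_1,\dots,u_k}$ inherits the relevant stability, and then, for an arbitrary $u\in\frakm J\setminus I$, running a degree argument: since $u\in\frakm J_k$ forces $\deg(u)>\deg(u_l)$ for every generator $u_l$ of $J_k$, any $ux_j/x_i$ that lands in $J_k$ rather than $I$ must already lie in $\frakm J_k\subset\frakm J$. You instead reduce to the minimal generators of $\std_I(\frakm J)$ via the criterion from \cite[Lemma 2.7]{MR1921814}, write each such generator as $v=x_ku$ with $u\in G(J)\setminus I$, dispose of the range $j<k$ by the tautology $vx_j/x_k=x_ju\in\frakm J$, and handle $j\ge k$ by applying the stability of $u$ and multiplying the resulting exchange relation by $x_k$, which places it in $\frakm J+I$ with no degree bookkeeping at all. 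Your route avoids both the term order and the auxiliary lemma on initial segments, at the price of the small case analysis showing that $k\le j<\max(v)$ (resp.\ $\max(v')$) confines $j$ to a range where the stability of $u$ is applicable; that case analysis is correct — in particular, the (WIS) subcases $k\ge m$, $u'=1$, and $k\ge m'$ really do make the constraint $k\le j<\max(v')$ vacuous, since $\max(v')=\max(k,m')$ when $k<m$ and $\max(v')\le k$ otherwise. One cosmetic remark: since $G(J)\cap I=\emptyset$ is built into the definition of an $I$-stable ideal, the observation that $v\notin I$ forces $u\notin I$ is automatic rather than something requiring the ideal property of $I$, but this does not affect the argument.
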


\begin{proof}
    We will only consider the $I$-stable case. The other two cases are similar.

    Suppose that  $u_1\prec u_2 \prec \cdots \prec u_s$ are the monomials in $G(J)$.  Now, for each $u\in \frakm J \setminus I$, let $k$ be the smallest index such that $u\in J_k=\braket{u_1,u_2,\dots,u_k}$. Thus, $u\in \frakm J_k\setminus I$ and $\deg(u)>\deg(u_k)$. By Lemma \ref{WIS-deg}, $J_k$ is again weakly $I$-stable. Thus, for each $j<\max(u)$, there exists $i\in \supp(u)$ with $i>j$ and $ux_j/x_i\in J_k+I$. If $ux_j/x_i\in I$, we are done. If $ux_j/x_i\in J_k$, as $\deg(ux_j/x_i)=\deg(u)>\deg(u_k)\ge \deg(u_i)$ for all $u_i\in G(J_k)$, we have $ux_j/x_i\in \frakm J_k\subset \frakm J$.
\end{proof}

\begin{corollary}
    \label{Co}
    If $J$ is squarefree stable (resp.~squarefree weakly stable, squarefree strongly stable), then $J\wedge \frakm$ is also squarefree stable (resp.~squarefree weakly stable, squarefree strongly stable).
\end{corollary}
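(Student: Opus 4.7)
The plan is to obtain Corollary \ref{Co} as a direct squarefree specialization of Proposition \ref{WIS}, taking the irreducible ideal $I=\braket{x_1^2,\dots,x_n^2}$. With this choice of $I$, the paper's convention already identifies squarefree (weakly, strongly) stable ideals with (weakly, strongly) $I$-stable squarefree ideals, so the task reduces to matching $J\wedge\frakm$ with something to which Proposition \ref{WIS} applies.

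The key algebraic observation I would establish is the identity
\[
    J\wedge\frakm \;=\; \std_I(\frakm J)
\]
for every squarefree monomial ideal $J$. The ordinary product $\frakm J$ is generated by the monomials $x_k u$ with $u\in G(J)$ and $k\in[n]$. When $k\in\supp(u)$, the product $x_k u$ is divisible by $x_k^2$, hence lies in $I$; when $k\notin\supp(u)$, it is a squarefree monomial outside $I$. Thus the non-$I$ part of $\frakm J$ is generated exactly by the products $u x_k$ with $u\in G(J)$ and $k\notin\supp(u)$, which (up to the signs that disappear at the level of ideals) is precisely the generating set of $J\wedge\frakm$ from Proposition \ref{Im}. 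Both containments are then routine: a generator $u x_k$ of $J\wedge\frakm$ is divisible only by elements of $G(\frakm J)$ that are themselves squarefree (any divisor of a squarefree monomial being squarefree), so it lies in $\std_I(\frakm J)$; conversely, every element of $G(\frakm J)\setminus I$ is of the form $u' x_{k'}$ with $k'\notin\supp(u')$ and thus lies in $J\wedge\frakm$.

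With this identification in hand, each of the three implications in the corollary becomes the corresponding case of Proposition \ref{WIS}: if $J$ is $I$-stable (resp.\ weakly, strongly $I$-stable) then $\std_I(\frakm J)=J\wedge\frakm$ is $I$-stable (resp.\ weakly, strongly $I$-stable), and since $J\wedge\frakm$ is visibly squarefree (its generators being products of a squarefree monomial with a variable outside its support), it is squarefree stable in the corresponding sense. I do not expect a real obstacle in this argument; the only point requiring any attention is verifying the equality $J\wedge\frakm=\std_I(\frakm J)$, which is essentially bookkeeping about which generators of $\frakm J$ survive modulo the squares $x_i^2$.
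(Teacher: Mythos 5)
Your proposal is correct and is exactly the route the paper intends: Corollary~\ref{Co} is a direct specialization of Proposition~\ref{WIS} to $I=\braket{x_1^2,\dots,x_n^2}$, via the identity $\std_I(\frakm J)=J\wedge\frakm$ for squarefree $J$, which you verify carefully. (One small quibble: the description of the generators of $J\wedge\frakm$ comes from the unnamed Lemma on the wedge product rather than from Proposition~\ref{Im}, but this does not affect the argument.)
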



\begin{proposition}
    \label{WIS-LQ}
    Weakly $I$-stable ideals have linear quotients.
\end{proposition}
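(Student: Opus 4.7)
The plan is to prove that $J$ has linear quotients with respect to the term order $\prec$ introduced just before Lemma~\ref{WIS-deg}. Write $G(J)=\{u_1,\ldots,u_s\}$ with $u_1\prec\cdots\prec u_s$. By Lemma~\ref{WIS-deg}, every truncation $J_k=\braket{u_1,\ldots,u_k}$ is again weakly $I$-stable, so induction on $|G(J)|$ reduces the problem to showing that the colon ideal $J_{s-1}:u_s$ is generated by a subset of the variables. It then suffices, for each $l<s$, to exhibit some $m<s$ and variable $x_d$ with $\braket{u_m}:u_s=\braket{x_d}$ and $x_d$ dividing $u_l/\gcd(u_l,u_s)$.

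Since $u_l\nmid u_s$, the index $j_0=\max\{j:\deg_{x_j}(u_l)>\deg_{x_j}(u_s)\}$ is well-defined, and $x_{j_0}$ is the natural candidate for $x_d$. In the favorable case $j_0<\max(u_s')$, condition (WIS) applied to $u_s$ at $j=j_0$ supplies an $i_0\in\supp(u_s)$ with $i_0>j_0$ and $v:=u_sx_{j_0}/x_{i_0}\in J+I$. A short analysis using irreducibility of $I$ shows $v\notin I$: any $x_p^{e_p}\in G(I)$ dividing $v$ but not $u_s$ must satisfy $p=j_0$ and $\deg_{x_{j_0}}(u_s)=e_{j_0}-1$, whence $\deg_{x_{j_0}}(u_l)>e_{j_0}-1$ forces $x_{j_0}^{e_{j_0}}\mid u_l$, contradicting $u_l\notin I$. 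Hence $v\in J$, and any $u_m\in G(J)$ dividing $v$ satisfies $u_m/\gcd(u_m,u_s)\mid x_{j_0}$; minimality of $u_m$ upgrades this to equality, while the rightmost-disagreement comparison defining $\prec$ places $u_m\preceq v\prec u_s$, so $m<s$.

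The substantive case is $j_0\ge\max(u_s')$, where (WIS) applied to $u_s$ at $j_0$ is unavailable. Here I would instead invoke (WIS) on $u_l$ itself, iteratively rewriting $u_l=v_0\to v_1\to\cdots$ in $J+I$ via substitutions $v_{t+1}=v_tx_{p_t}/x_{q_t}$, with $p_t$ chosen from $\supp(u_s)\cap[1,\max(u_s')]$ and $q_t$ from $\supp(v_t)$ lying above $\max(u_s')$. Each step shifts excess mass of $v_t$ from ``outside'' indices toward $\supp(u_s)$ while remaining in $J$, the property $v_t\notin I$ being traced back to $u_l\notin I$ by the same irreducibility argument. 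The iteration terminates at some $v_t$ dividing $u_sx_{j_0}/x_{i_0}$ for a suitable $i_0\in\supp(u_s)$, and any $u_m\in G(J)$ dividing $v_t$ is the required witness.

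The main obstacle is controlling this second case: rigorously maintaining $v_t\in J$ throughout the (WIS)-descent on $u_l$, where irreducibility of $I$ is critical, and choosing the descent indices $p_t,q_t$ so that a suitable lex-style invariant strictly decreases and the iteration terminates at a monomial of the promised shape. Once controlled, the witness $u_m$ is read off exactly as in the favorable case, and together the two cases show that $x_{j_0}\in\braket{u_m}:u_s$ for every $l<s$, completing the verification of linear quotients.
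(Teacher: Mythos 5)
Your proof and the paper's share the same reduction: order $G(J)$ by the term order $\prec$ (degree first, then rightmost disagreement), invoke Lemma~\ref{WIS-deg} to get that every truncation $J_k=\braket{u_1,\dots,u_k}$ is again weakly $I$-stable, and induct to reduce to showing the last colon ideal is generated by variables. From there, however, the paper simply cites \cite[Lemma~7.2]{MR1921814} (which it quotes immediately after the proposition statement) -- that lemma says precisely that $J_{k-1}:u_k$ is generated by variables under these hypotheses -- and the proof is finished. You instead attempt to reprove the content of that lemma from scratch.

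Your first case ($j_0<\max(u_s')$) is correct and carefully argued: applying (WIS) to $u_s$ at $j_0$, the irreducibility argument showing $v\notin I$, and the rightmost-disagreement comparison placing $u_m\prec u_s$ all work. But the second case ($j_0\ge\max(u_s')$), which is exactly where (WIS) on $u_s$ gives you nothing and which is the real content of Shakin's Lemma~7.2, is left as an iterative-descent sketch rather than a proof, and you yourself flag it as ``the main obstacle.'' This is a genuine gap: you have not specified how the descent indices $p_t,q_t$ are chosen, how $v_t\in J$ (rather than $v_t\in I$) is maintained at every step, or why the iteration must terminate at a monomial dividing $u_sx_{j_0}/x_{i_0}$. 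Without those details the argument does not close. The quick fix is to cite \cite[Lemma~7.2]{MR1921814} at this point, as the paper does; actually carrying out your descent in case~2 is considerably more work than the sketch suggests, and the details are nontrivial enough that they cannot be waved away.
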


It follows directly from Lemma \ref{WIS-deg} and the following

\begin{lemma}
    [{\cite[Lemma 7.2]{MR1921814}}]
    Suppose that $J=\std_I(J)$ is a monomial ideal.  Let $v\in S\setminus (I+J)$ be a monomial with $\deg(v)=a$ such that $J'=\braket{J,v}$ is a weakly $I$-stable ideal. If $deg(u)\le a$ for every $u\in G(J)$, then $J'/J\isom S/L(-a)$ where $L=\braket{x_t: vx_t\in J\setminus I}$.
\end{lemma}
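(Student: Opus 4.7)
The plan is to reduce the stated isomorphism to the ideal equality $J:v=L$, and then to verify that equality. The reduction is standard: the $S$-linear surjection $\varphi\colon S(-a)\twoheadrightarrow J'/J$ sending $1\mapsto\bar v$ has kernel $J:v$, so $J'/J\isom(S/(J:v))(-a)$, and the task becomes showing $J:v=L$. The inclusion $L\subseteq J:v$ is immediate from the definition of $L$. For the reverse, I would first show that every variable in $J:v$ already lies in $L$: suppose for contradiction $vx_t\in J\cap I$. Since $I$ is irreducible (its generators are pure powers of the variables by \cite[Corollary~1.3.2]{MR2724673}) and $v\notin I$, the only way $vx_t$ can enter $I$ is via some $x_t^{c_t}\in G(I)$ with $x_t^{c_t-1}\mid v$; then $v\notin I$ forces $\deg_{x_t}(v)=c_t-1$. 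Now $vx_t\in J$ gives some $u\in G(J)$ with $u\mid vx_t$; using $J=\std_I(J)$ we have $u\notin I$, so $\deg_{x_t}(u)\le c_t-1=\deg_{x_t}(v)$. Combined with $\deg_{x_k}(u)\le\deg_{x_k}(v)$ for $k\ne t$ (from $u\mid vx_t$), this forces $u\mid v$ and therefore $v\in J$, contradicting $v\notin I+J$.

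Next I would show that $J:v$ is generated by single variables. The ideal $J:v$ is generated by the monomials $u_0:=u/\gcd(u,v)$ for $u\in G(J)$, each satisfying $u_0\ne 1$ (else $u\mid v$ would give $v\in J$). The claim reduces to showing that each such $u_0$ is divisible by a variable in $J:v$, and I induct on $\deg u_0$. The base case $\deg u_0=1$ is trivial. For $\deg u_0\ge 2$, the degree bound $\deg u\le a=\deg v$ rewrites as $\deg u_0\le\deg v_0$ where $v_0:=v/\gcd(u,v)$, and $\supp(u_0)\cap\supp(v_0)=\emptyset$. Apply the weakly $I$-stable property of $J'$ to $u\in J'\setminus I$: choose $i\in\supp(u_0)$ and $j\in\supp(v_0)$ with $j<i$ and $j<\max(u')$, so that $ux_j/x_i\in J'+I$. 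The landing of $ux_j/x_i$ admits three cases. If $ux_j/x_i\in I$, then $j\in\supp(v_0)$ combined with an exponent argument (analogous to the first step) forces $x_j^{c_j}\mid v$, contradicting $v\notin I$; so this case cannot occur. If $ux_j/x_i\in\braket{v}$, then $v\mid ux_j/x_i$ together with $\deg u\le\deg v$ forces equality $v=ux_j/x_i$, equivalently $ux_j=vx_i$, so $vx_i\in J$; since $i\in\supp(u_0)$, the variable $x_i$ divides $u_0$ and lies in $J:v$, completing the induction step at this $u$. Otherwise $ux_j/x_i\in J$; picking $u^*\in G(J)$ dividing $ux_j/x_i$ yields $u^*_0\mid u_0$ strictly (an exponent check: at $i$ the $u_0$-exponent drops by one, at $j$ it stays zero because $j\in\supp(v_0)$), and the inductive hypothesis applied to $u^*$ finishes.

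The main obstacle is guaranteeing that the indices $i\in\supp(u_0)$, $j\in\supp(v_0)$ with $j<i$ and $j<\max(u')$ can always be chosen simultaneously. The degree bound $\deg u\le a$ makes $|\supp(v_0)|\ge\deg u_0\ge 2$, but the corner case $\supp(u_0)=\{\max(u)\}$ (forcing $i=\max(u)$) requires separate care: there $\deg_{x_{\max(u)}}(u)\ge 2$ gives $\max(u')=\max(u)$, and one must ensure $\supp(v_0)$ contains some $j<\max(u)$, which may require an auxiliary application of the weakly $I$-stable property to $v$ itself rather than to $u$. The degree and standard-form hypotheses together with the irreducibility of $I$ conspire to make the exchange feasible, but the combinatorial bookkeeping across the three landing cases is the crux of the argument.
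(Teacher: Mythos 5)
The paper itself gives no proof of this lemma --- it is quoted verbatim from Shakin's \cite[Lemma 7.2]{MR1921814} --- so your proposal has to stand on its own. Your reduction to the ideal identity $J:v=L$ is correct, and your first step is sound: if $vx_t\in J$ then $vx_t\notin I$, because $I$ is generated by pure powers, $v\notin I$ forces the offending power to be $x_t^{c_t}$ with $\deg_{x_t}(v)=c_t-1$, and then the generator $u\in G(J)$ dividing $vx_t$ (which satisfies $u\notin I$ since $J=\std_I(J)$) would divide $v$. The gap is in the essential second step, that $J:v$ is generated by variables, and your closing paragraph in effect concedes that this step is not done.

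Concretely, two things go wrong. First, (WIS) is existential in $i$: for each $j<\max(u')$ it supplies \emph{some} $i\in\supp(u)$ with $i>j$, so you may not ``choose $i\in\supp(u_0)$.'' If the $i$ handed to you lies in $\supp(\gcd(u,v))$ rather than in $\supp(u_0)$, then in your third landing case the generator $u^*\in G(J)$ dividing $ux_j/x_i$ satisfies $u^*_0\mid u_0$ but possibly $u^*_0=u_0$ (for instance $u=x_1x_4x_5$, $v=x_2x_3x_5$, $j=2$, $i=5$, $u^*=x_1x_2x_4$ gives $u^*_0=x_1x_4=u_0$), so your induction on $\deg(u_0)$ does not advance and is not well founded. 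Second, a $j\in\supp(v_0)$ with $j<\max(u')$ need not exist at all --- all of $\supp(v_0)$ may sit above $\max(u')$ --- and you explicitly defer this ``main obstacle'' rather than resolve it. What does strictly decrease under an exchange $u\mapsto ux_j/x_i$ with $j<i$ and unchanged degree is the position of the monomial in the order $\prec$ introduced before Lemma \ref{WIS-deg}; organizing the induction along $\prec$ (which is how Shakin's Lemmas 7.1--7.2 are structured, and why the paper records Lemma \ref{WIS-deg} first) is the natural repair. As written, however, the combinatorial crux of the argument is missing.
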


\begin{remark}
    \label{cpt}
    Let $J=\std_I(J)$ be a monomial ideal.
    One might call $J$ \Index{componentwise $I$-stable} if for each degree $d$, the component ideal $\braket{\std_I(J_d)}$ is $I$-stable. But it follows easily from definition that $J$ is $I$-stable if and only if $J$ is componentwise $I$-stable. The same is true for weakly $I$-stable and strongly $I$-stable properties.
\end{remark}

\begin{definition}
    A simplicial complex $\Delta$ is \Index{co-stable} (resp.~\Index{weakly co-stable}, \Index{strongly co-stable}) if it is the Eagon complex of a squarefree stable (resp.~squarefree weakly stable, squarefree strongly stable) ideal.
\end{definition}

We have the following implications:
\[
    \text{strongly co-stable} \implies \text{co-stable} \implies \text{weakly co-stable}.
\]

Now, having Corollaries \ref{facet-skeletons}, \ref{skeletons} and Remark \ref{cpt} in mind, we provide the following answer regarding Question \ref{Skeleton}\ref{ques-b} in the current framework:

\begin{corollary}
    If $\Delta$ is co-stable (resp.~weakly co-stable, strongly co-stable), then all its facet skeletons $\Delta^{[i]}$ and skeletons $\Delta^{(r,s)}$ are also co-stable (resp.~weakly co-stable, strongly co-stable).
\end{corollary}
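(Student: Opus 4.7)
The plan is to translate both parts of the statement through Alexander duality and reduce each to an ideal-theoretic preservation result, in the same spirit as the proof of Corollary~\ref{facet-skeletons}. By definition, $\Delta$ is (weakly, strongly) co-stable precisely when $J := I_{\Delta^\vee}$ is squarefree (weakly, strongly) stable, so throughout I will argue on the ideal side.

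For the facet skeletons the argument is essentially a re-run of the proof of Corollary~\ref{facet-skeletons}: Observation~\ref{dual-observation}(e) identifies $I_{(\Delta^{[1]})^\vee}$ with $J \wedge \frakm$, and Corollary~\ref{Co} shows that this wedge operation preserves the squarefree (weakly, strongly) stable property. Since $\Delta^{[i]} = (\Delta^{[i-1]})^{[1]}$, a short induction on $i$ handles the higher facet skeletons.

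For the ordinary skeletons $\Delta^{(r,s)}$, Observation~\ref{dual-observation}(b) presents $K := I_{(\Delta^{(r,s)})^\vee}$ as the ideal generated by those squarefree $f \in J$ with $\deg(f) \geq n-s$ that are divisible by some $u \in G(J)$ with $\deg(u) \leq n-r$. I would decompose the construction of $K$ into two successive operations. First, set $J' := \langle u \in G(J) : \deg(u) \leq n-r \rangle$; since the term order $\prec$ of Lemma~\ref{WIS-deg} is degree-refining, these low-degree generators form an initial segment of $G(J)$, and Lemma~\ref{WIS-deg} shows that $J'$ inherits the same flavor of squarefree stability as $J$. Second, \emph{pump $J'$ up to degree $\geq n-s$} to produce $K$, whose minimal generators are either the elements of $G(J')$ of degree $\geq n-s$, or squarefree products $vw$ of degree exactly $n-s$ with $v \in G(J')$ of degree $< n-s$ and $\supp(w) \cap \supp(v) = \emptyset$.

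The main obstacle is showing that this pumping step preserves each flavor of squarefree stability. I would verify the defining condition directly on a generator $f = vw$ of $K$ (allowing $w = 1$) at an index $j \notin \supp(f)$ with $j < \max(f)$, splitting on whether $j < \max(v)$ or $j \geq \max(v)$. In the first range, the stability of $J'$ at $v$ supplies an $i \in \supp(v) \subset \supp(f)$ with $vx_j/x_i \in J'$, and multiplying by $w$ keeps $fx_j/x_i$ squarefree, of degree $\deg(f) \geq n-s$, and still divisible by some generator of $J$ of degree at most $n-r$, hence in $K$. In the second range, the forced inequality $i > j \geq \max(v)$ places $i$ in $\supp(w)$, so $fx_j/x_i = v \cdot (wx_j/x_i)$ remains squarefree and divisible by $v \in G(J')$, hence again lies in $K$. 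The weakly stable variant runs along the same lines with $\max(v')$ replacing $\max(v)$ where necessary, and the strongly stable variant is simpler still because every $j < i$ is admissible. Combining the two operations then places $K$ in the desired class and completes the proof.
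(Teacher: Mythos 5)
Your argument is correct, and both halves land in essentially the same place as the paper: translate via Alexander duality, use Corollary~\ref{Co} for $\wedge\,\frakm$, and use degree truncation for $\Delta^{(r,s)}$. For the facet skeletons you reproduce the paper's proof exactly.

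For the ordinary skeletons the paper's proof is extremely compressed --- it only says ``observe as in Remark~\ref{cpt}'' --- whereas you make explicit a two-stage factorization of $K=I_{(\Delta^{(r,s)})^\vee}$: first truncate $J$ to $J'=\langle u\in G(J):\deg u\le n-r\rangle$, which is an initial segment in the $\prec$-order of Lemma~\ref{WIS-deg} and hence inherits the relevant squarefree stability; then pump $J'$ up to support degree $\ge n-s$ and check $(\mathrm{IS})/(\mathrm{WIS})/(\mathrm{SIS})$ directly on generators (which suffices by the $\std_I$ reduction lemma). This decomposition is not the literal content of Remark~\ref{cpt}: that remark treats $\langle \std_I(J_d)\rangle$ one degree at a time, while $K$ can have minimal generators in several degrees once $r<s$ (the surviving high-degree generators of $J'$ together with the padded degree-$(n-s)$ monomials). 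Your direct case analysis on $f=vw$ --- splitting on whether $j<\max(v)$, using the stability of $J'$ in the low range and choosing $i\in\supp(w)$ in the high range --- handles this mixed-degree situation cleanly; for $\mathrm{SIS}$ the split on $i\in\supp(v)$ versus $i\in\supp(w)$ is the natural analogue. One small point worth stating explicitly: in the second range for $(\mathrm{IS})$ and $(\mathrm{WIS})$ you still need an $i\in\supp(w)$ with $i>j$, which follows because $j<\max(f)$ (resp.\ $j<\max(f')$) forces $\max(w)>j$; and when $w=1$ the second range is vacuous, so no extra case arises. With those remarks spelled out the argument is complete, and it is arguably a more careful justification of the claim than the paper itself provides.
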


\begin{proof}
    We apply Observation \ref{dual-observation} to translate combinatorial properties of simplicial complexes to properties of squarefree monomial ideals.
    For the facet skeletons part, we simply apply Corollary \ref{Co}. For the skeletons $\Delta^{(r,s)}$ part, we observe as in Remark \ref{cpt}.
\end{proof}

\subsection{Weakly polymatroidal ideals}

Let $u=\bdx^{\bda}$ and $v=\bdx^{\bdb}$ be two distinct monomials in $S$. When $u\succ_{lex} v$ lexicographically, there exists an index $t$ such that $\bda(i)=\bdb(i)$ for $1\le i<t$ and $\bda(t)>\bdb(t)$. This is a term order for $\Mon(S)$.

\begin{definition}
    [{cf.~\cite[Definition 1.1]{MR2724673}}]
    \begin{enumerate}[a]
        \item A monomial ideal $I$ is called \Index{weakly polymatroidal} if for every two monomials $u\succ_{lex} v\in G(I)$ the following condition is satisfied:
            \begin{itemize}
                \item[(WP):]   if $t$ is the smallest index such that $\deg_{x_t}(u)>\deg_{x_t}(v)$, then there exists $j>t$ such that $x_t(v/x_j)\in I$.
            \end{itemize}
        \item The monomial ideal $I$ is called \Index{componentwise weakly polymatroidal} (resp.~\Index{support-componentwise weakly polymatroidal}) if for each $d$, the ideal $I_{d}$ (resp.~$I_{\braket{d}}$) is weakly polymatroidal.
    \end{enumerate}
\end{definition}

We will treat principal monomial ideals as trivial weakly polymatroidal ideals. Notice that the original definition of weakly polymatroidal property can be traced back to \cite{MR2260118} and requires the minimal monomial generators of $I$ to be in one degree.

Evidently, the weakly polymatroidal property is closely related to the lexicographic order $\succ_{lex}$ of the monomials. Notice that $x_t(v/x_j)\succ_{lex} v$ in the above definition, although we don't necessarily have $u\succeq_{lex}x_t(v/x_j)$.

Mohammadi and Moradi \cite[Theorem 1.6]{MR2768496} proved that if $I$ is weakly polymatroidal, then $I\frakm$ is again weakly polymatroidal. If additionally $I$ is generated by monomials in one degree, then $I$ is componentwise weakly polymatroidal, by \cite[Corollary 1.7]{MR2768496}. Unlike the linear quotients case, the degree requirement cannot be removed, as shown by \cite[Example 1.8]{MR2768496}.

\begin{theorem}
    \label{WPP}
    If ideal $I$ is a weakly polymatroidal ideal generated by monomials in one support-degree, then $I\wedge \frakm$ is again weakly polymatroidal.
\end{theorem}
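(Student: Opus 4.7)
The plan is to verify the weakly polymatroidal property of $I\wedge\frakm$ directly, following the strategy of Mohammadi and Moradi's proof of \cite[Theorem 1.6]{MR2768496} for $I\frakm$. First, I would record a useful criterion: since all generators of $I$ have the common support-degree $d$, a monomial $m$ lies in $I\wedge\frakm$ if and only if $m\in I$ and $|\supp(m)|\ge d+1$. Indeed, if $m\in I$ is divisible by some $u\in G(I)$ and $|\supp(m)|\ge d+1>|\supp(u)|$, then any $j\in\supp(m)\setminus\supp(u)$ produces a generator $ux_j$ of $I\wedge\frakm$ dividing $m$.

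Now I would fix two minimal generators $w_1,w_2\in G(I\wedge\frakm)$ with $w_1\succ_{lex}w_2$, write each as $w_i=u_ix_{j_i}$ with $u_i\in G(I)$ and $j_i\notin\supp(u_i)$, and let $t$ be the smallest index with $\deg_{x_t}(w_1)>\deg_{x_t}(w_2)$. The case $u_1=u_2$ is immediate: the two monomials then agree outside $\{j_1,j_2\}$, so the lex order forces $j_1<j_2$, $t=j_1$, and the choice $k=j_2>t$ gives $x_t(w_2/x_k)=w_1\in I\wedge\frakm$.

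For $u_1\ne u_2$, the strategy is to extract from the coordinate equalities $\deg_{x_i}(w_1)=\deg_{x_i}(w_2)$ for $i<t$, combined with $j_i\notin\supp(u_i)$, the information that $u_1\succ_{lex}u_2$ with some smallest differing index $s\le t$. A short case analysis according as $t\in\{j_1,j_2\}$ or neither identifies such an $s$. Applying the WP hypothesis of $I$ to $(u_1,u_2)$ then yields some $k\in\supp(u_2)$ with $k>s$ and $x_s(u_2/x_k)\in I$. Multiplying by $x_{j_2}$ and adjusting by the factor $x_t/x_s$ when $s<t$, I would confirm that $x_t(w_2/x_k)$, or a comparable expression with a suitable index $k'>t$ in place of $k$, lies in $I$ and has support-degree at least $d+1$; the criterion above then places it in $I\wedge\frakm$ and completes the verification.

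The main obstacle is support-degree control in this last step: multiplying by $x_t$ and dividing by $x_k$ can in principle shrink the support to $d$, precisely when $t\in\supp(w_2)$, $k\ne t$, and $\deg_{x_k}(w_2)=1$. The uniform-support-degree hypothesis on $G(I)$ is what prevents this collapse, because it pins $|\supp(w_i)|=d+1$ rigidly and keeps each $j_i$ outside $\supp(u_i)$, which constrains the positions of $t$ and $k$ enough to select a valid index. Without such uniformity, $I\wedge\frakm$ would be generated in multiple support-degrees and the conclusion could fail, in analogy with \cite[Example 1.8]{MR2768496} in the total-degree setting.
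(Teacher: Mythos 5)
Your opening reduction---that a monomial $m$ lies in $I\wedge\frakm$ if and only if $m\in I$ and $|\supp(m)|\ge d+1$---is correct and a reasonable starting point, but the core of your plan breaks down at the assertion that $u_1\ne u_2$ entails $u_1\succ_{lex}u_2$ with smallest differing index $s\le t$. This is simply not forced by the coordinate equalities. Take $I=\braket{x_2x_3,\,x_3x_4}\subset\KK[x_1,\dots,x_5]$, which is weakly polymatroidal and generated in support-degree $2$, and let $w_1=x_1x_3x_4$, $w_2=x_2x_3x_5\in G(I\wedge\frakm)$. The only decompositions are $u_1=x_3x_4$, $j_1=1$ and $u_2=x_2x_3$, $j_2=5$; then $w_1\succ_{lex}w_2$ with $t=1$, yet $u_1\prec_{lex}u_2$. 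The WP hypothesis applied to the ordered pair $(u_1,u_2)$ is therefore unavailable. The statement nonetheless holds here, but only because the direct choice $k=j_2$ already does the job: $x_t(w_2/x_{j_2})=x_1u_2\in I\wedge\frakm$ since $1\notin\supp(u_2)$, with no invocation of WP at all.

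The paper avoids this trap by fixing a non-arbitrary decomposition---for each $w_a$ it chooses $u_a\in G(I)$ \emph{lex-maximal} subject to $w_a=u_ax_{j_a}$ with $j_a\notin\supp(u_a)$, which makes $j_a$ maximal among all legal choices---and then runs a six-way case analysis on the relative positions of $j_1,j_2,t$. Two of those cases are the ones you envisage, where indeed $u_1\succ_{lex}u_2$ with $s=t$ and WP furnishes the needed index; one ($t=j_1<j_2$) is the direct construction illustrated above; and the remaining three, which contain exactly the configurations where $s<t$ or where $u_2\succ_{lex}u_1$, are shown to be \emph{impossible}: each leads to a contradiction with the minimality of $w_1$ or $w_2$ in $G(I\wedge\frakm)$ combined with the lex-maximal choice of $u_a$. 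Your sketch has neither the careful choice nor these impossibility arguments, and ``adjusting by the factor $x_t/x_s$'' cannot substitute for them: multiplying a monomial of $I$ by $x_t$ and dividing by $x_s$ need not stay in $I$, and in the paper's treatment those $s<t$ configurations are precisely the ones ruled out rather than the ones that yield a construction.
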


\begin{proof}
    Take two different elements $w_1\succ_{lex}w_2$ in $G(I\wedge \frakm)$.  Let $u\in G(I)$ be the greatest with respect to lexicographical order such that $w_1=x_iu$ for some $i\in [n]\setminus \supp(u)$. This implies that if $w_1=x_{i'}u'$ for another $u'\in G(I)$ and $i'\in[n]\setminus \supp(u')$, then $i>i'$. Similarly, we choose $v\in I$ for $w_2$ so that $w_2=x_jv$ has this property.

    Suppose $w_1=x_iu=\bdx^{\bda}$, $w_2=x_jv=\bdx^{\bdb}$ and $t\in[n]$ such that $\bda(k)=\bdb(k)$ for $k=1,2,\dots,t-1$ and $\bda(t)>\bdb(t)$.  We need to find suitable $l>t$ such that $x_t(w_2/x_l)\in I\wedge \frakm$. Notice that since $i\notin \supp(u)$, $\bda(i)=1$. Similarly, $\bdb(j)=1$.

    We have the following several cases.
    \begin{enumerate}[a]
        \item \label{part-a} When $j\le t<i$, $\deg_{x_k}(u)=\deg_{x_k}(v)$ for $1\le k <j$ and $\deg_{x_j}(u)=\bda(j)>\deg_{x_j}(v)=\bdb(j)-1=0$. As $u,v\in G(I)$ and $I$ is weakly polymatroidal, we can find $l>j$ such that $w=x_j(v/x_l)\in I$. Now $x_lw=x_jv=w_2$.

            If $w\in G(I)$, as $\suppdeg(w)=\suppdeg(v)$ and $j\notin\supp(v)$, we have $l\notin \supp(w)$. But since $l>j$, this contradicts the choice of $x_j$ and $v$.

            If $w\notin G(I)$, we can write $w=w'w''$ with $w'\in G(I)$ and $\deg(w'')\ge 1$. As $\suppdeg(w_2)=\suppdeg(v)+1>\suppdeg(w')$, we can find suitable $k\in \supp(x_lw'')\setminus\supp(w')$. Now $x_kw'$ divides $w_2$ and $x_kw'\ne w_2$. This contradicts the choice of $w_2\in G(I\wedge \frakm)$.

        \item \label{part-b} When $t<i$ and $j> t$, $\deg_{x_k}(u)=\deg_{x_k}(v)$ for $1\le k <t$ and $\deg_{x_t}(u)>\deg_{x_t}(v)$. There is some $l>t$ such that $w=x_t(v/x_l)\in I$.
            We can write $w=w'w''$ with $w'\in G(I)$.
            As $j\ne t$ and $j\notin \supp(v)$, we have  $j\notin \supp(w')$. Now $x_t(w_2/x_l)=(x_jw')w''\in I\wedge \frakm$.

        \item When $t\ge i=j$, as $\bda(i)=\bdb(j)=1$, we will have indeed $t>i=j$. Whence, $\deg_{x_k}(u)=\deg_{x_k}(v)$ for $1\le k<t$ and $\deg_{x_t}(u)>\deg_{x_t}(v)$. There is some $l>t$ such that $w=x_t(v/x_l)\in I$. As argued in part \ref{part-b}, we have $x_t(w_2/x_l)=x_jw\in I\wedge \frakm$.

        \item When $t\ge i>j$, we have $\deg_{x_k}(u)=\deg_{x_k}(v)$ for $1\le k<j$ and $\deg_{x_j}(u)=1>\deg_{x_j}(v)=0$. There exists some $l>j$ such that $w=x_j(v/x_l)\in I$. As in part \ref{part-a}, we get a contradiction.

        \item When $t>i$ and $j>i$, we have $\deg_{x_k}(u)=\deg_{x_k}(v)$ for $1\le k<i$ and $\deg_{x_i}(v)=1>\deg_{x_i}(u)=0$. There exists some $l>i$ such that $w=x_i(u/x_l)\in I$. As in part \ref{part-a}, we get a contradiction.
        \item When $t=i<j$, $x_t(w_2/x_j)=x_iv\in \frakm I$. As $\bda(i)=1>\bdb(i)$, $i\notin \supp(w_2)\supset \supp(v)$. Thus, $x_iv\in I\wedge \frakm$.
    \end{enumerate}
    And this completes the proof.
\end{proof}

\begin{example}
    In general, we cannot remove the support-degree assumption in Theorem \ref{WPP}. For instance, let $I=\braket{x_1x_2,x_2^3}\subset \KK[x_1,x_2,x_3]$. This is a weakly polymatroidal ideal. On the other hand, $I\wedge \frakm=\braket{x_1x_2^3,x_2^3x_3,x_1x_2x_3}$ is not weakly polymatroidal. 
\end{example}

As an immediate consequence of the Theorem \ref{WPP}, we have

\begin{corollary}
    [{cf.~\cite[Corollary 1.7]{MR2768496}}]
    \label{WPC}
    If ideal $I$ is a weakly polymatroidal ideal generated by monomials in one support-degree, then $I$ is support-componentwise weakly polymatroidal.
\end{corollary}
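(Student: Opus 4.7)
The plan is a straightforward induction on the component index $e$, with Theorem \ref{WPP} serving as the single engine. Since $I$ is a monomial ideal whose minimal generators all share support-degree $d$, one has $I = I_{\braket{d}}$, and the component $I_{\braket{e}}$ vanishes for $e < d$ (which is trivially weakly polymatroidal). The base case $e = d$ of the induction is exactly the hypothesis.

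For the inductive step, I would assume that $I_{\braket{e}}$ is a weakly polymatroidal ideal whose minimal monomial generators all have support-degree $e$. Using the identification $J\wedge\frakm = J_{\braket{e+1}}$ recorded in Section 3 whenever $J = J_{\braket{e}}$, one rewrites $I_{\braket{e+1}} = I_{\braket{e}}\wedge\frakm$, and Theorem \ref{WPP} then immediately yields that this component is weakly polymatroidal.

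The only point requiring explicit verification is that the ``generated in a single support-degree'' condition propagates, so that Theorem \ref{WPP} can be reapplied at the next step. This is essentially automatic: if $J$ is a monomial ideal all of whose minimal generators lie in support-degree $e$, then every generator of $J\wedge\frakm$ has the form $\pm u x_j$ with $u\in G(J)$ and $j\notin\supp(u)$, so has support-degree exactly $e+1$; consequently every minimal monomial generator of $J\wedge\frakm$ has support-degree $e+1$ as well. No serious obstacle is anticipated, since the substantive work has already been carried out in Theorem \ref{WPP}; this corollary is a clean iteration.
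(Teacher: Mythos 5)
Your proof is correct and matches the paper's approach exactly: the paper presents the corollary as an immediate iterate of Theorem \ref{WPP}, and your induction simply spells that out. The one small point worth noting explicitly (which you use implicitly when you write $I_{\braket{e+1}}=I_{\braket{e}}\wedge\frakm$) is that $(I_{\braket{e}})_{\braket{e+1}}=I_{\braket{e+1}}$; this holds because $I=I_{\braket{d}}$ with $d\le e$, so any element of $I$ of support-degree $e+1$ is a multiple of a generator of support-degree $d$ and hence of some element of $I$ of support-degree $e$, placing it in $I_{\braket{e}}$. With that observation the induction closes, and your verification that the single-support-degree hypothesis propagates to $J\wedge\frakm$ is exactly the right thing to check.
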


\begin{example}
    \label{NotCptWP}
    Not all weakly polymatroidal ideals are (support-)componentwise weakly polymatroidal. For instance, the ideal $I=\braket{x_1x_3,x_2x_3,x_1x_4x_5,x_2x_4x_5}$ in \cite[Example 1.8]{MR2768496} is weakly polymatroidal, but it is neither componentwise weakly polymatroidal nor support-componentwise weakly polymatroidal. 
\end{example}

Let $I$ be a monomial ideal. Following \cite{MR2557882}, we denote by $I_*$ the monomial ideal generated by the squarefree monomials in $I$ and call it the \Index{squarefree part} of $I$. Soleyman Jahan and Zheng \cite[Proposition 2.10]{MR2557882} showed that if $I$ has linear quotients, then $I_*$ has linear quotients.
We have a similar result for weakly polymatroidal ideals.

\begin{lemma}
    If $I$ is a weakly polymatroidal ideal, then $I_*$ is also weakly polymatroidal.
    \label{sqf-part}
\end{lemma}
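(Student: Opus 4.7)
The plan is to reduce the statement about $I_*$ directly to the weakly polymatroidal condition on $I$ by first pinning down exactly which monomials minimally generate $I_*$.

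First I would verify the elementary identification
\[
    G(I_*) = \Set{w\in G(I) : w \text{ is squarefree}}.
\]
The inclusion $\supseteq$ is immediate. For $\subseteq$, take $w\in G(I_*)$. Since $w\in I$, there is some $w'\in G(I)$ with $w'\divides w$. Because $w$ is squarefree and divisors of squarefree monomials are squarefree, $w'$ is squarefree, hence $w'\in I_*$. Minimality of $w$ in $I_*$ then forces $w=w'\in G(I)$. So every minimal generator of $I_*$ is already a minimal generator of $I$.

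With this in hand, the weakly polymatroidal condition for $I_*$ is essentially inherited. Take any two distinct $u \succ_{\operatorname{lex}} v$ in $G(I_*)$. By the identification above, $u,v\in G(I)$ are both squarefree. Let $t$ be the smallest index with $\deg_{x_t}(u)>\deg_{x_t}(v)$; squarefreeness forces $\deg_{x_t}(u)=1$ and $\deg_{x_t}(v)=0$, i.e.~$x_t\divides u$ and $x_t\nmid v$. Applying condition (WP) to $u,v\in G(I)$, I obtain $j>t$ with $x_j\divides v$ and $x_t(v/x_j)\in I$. Because $v$ is squarefree and $x_t$ does not appear in $v$, the monomial $x_t(v/x_j)$ is still squarefree, so it belongs to $I_*$. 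This verifies (WP) for the pair $(u,v)$ in $I_*$.

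There is no serious obstacle here: the proof is a direct transport of the weakly polymatroidal property once one knows that $G(I_*)\subseteq G(I)$. The only place where one must be careful is that initial lemma $G(I_*)\subseteq G(I)$, which uses only that divisors of squarefree monomials are squarefree together with the minimality of elements of $G(I_*)$.
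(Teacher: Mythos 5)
Your proof is correct and follows essentially the same route as the paper: the paper's entire proof consists of the observation that $G(I_*)=G(I)\cap I_*$ (i.e., the squarefree elements of $G(I)$) followed by ``an easy application of the definition,'' which is exactly the transport argument you spell out. You have merely made explicit the two steps the paper leaves to the reader, namely why $G(I_*)\subseteq G(I)$ and why the monomial $x_t(v/x_j)$ produced by (WP) stays squarefree.
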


\begin{proof}
    It suffices to mention that $G(I_*)=G(I)\cap I_*$. Now, an easy application of the definition completes the proof.
\end{proof}

\begin{proposition}
    \label{WP-sqf}
    If $I$ is a squarefree weakly polymatroidal ideal, then $I\wedge \frakm$ is again weakly polymatroidal.
\end{proposition}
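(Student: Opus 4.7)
The plan is to identify $I\wedge \frakm$ with the squarefree part of the ordinary product ideal $I\frakm$ and then invoke two results already available in the excerpt. Mohammadi and Moradi's Theorem 1.6 of \cite{MR2768496} asserts that $I\frakm$ is weakly polymatroidal whenever $I$ is, with no restriction on the support-degrees of the generators, and Lemma \ref{sqf-part} shows that the squarefree part of any weakly polymatroidal ideal is again weakly polymatroidal. So the whole task reduces to establishing the identification $I\wedge \frakm=(I\frakm)_*$ under the squarefreeness hypothesis on $I$.

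To prove this identification, I would argue both inclusions directly on monomials. For $I\wedge\frakm\subseteq (I\frakm)_*$: each generator $u\wedge x_j$ of $I\wedge \frakm$, with $u\in G(I)$ and $j\notin\supp(u)$, equals $\pm ux_j$ as an element of $S$, and the monomial $ux_j$ is squarefree (since $u$ is squarefree and $j\notin\supp(u)$) and manifestly lies in $I\frakm$. For the reverse inclusion, take any squarefree monomial $v\in I\frakm$; it must be divisible by some generator $ux_k$ of $I\frakm$, where $u\in G(I)$ and $k\in[n]$. Since $v$ is squarefree, so is $ux_k$, which forces $k\notin\supp(u)$ (the squarefreeness of $u$ being automatic from that of $I$). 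Hence $ux_k\in I\wedge \frakm$, and consequently $v\in I\wedge \frakm$.

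Combining the two bullet points above with the identification $I\wedge \frakm=(I\frakm)_*$ yields the proposition immediately. The only real content is the bookkeeping in this identification, and there is no serious obstacle; in particular, the single-support-degree hypothesis imposed in Theorem \ref{WPP} is not needed here, because passing to the squarefree part automatically discards the monomials that could otherwise obstruct the weakly polymatroidal condition when support-degrees vary among the generators of $I\frakm$.
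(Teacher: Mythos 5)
Your proposal is correct and follows exactly the same route as the paper: the paper's proof is precisely the one-line chain ``by \cite[Theorem 1.6]{MR2768496}, $I\frakm$ is weakly polymatroidal; since $I$ is squarefree, $(I\frakm)_*=I\wedge\frakm$; now apply Lemma \ref{sqf-part}.'' You have merely (and correctly) spelled out the verification of the identification $(I\frakm)_*=I\wedge\frakm$, which the paper leaves implicit.
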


\begin{proof}
    By \cite[Theorem 1.6]{MR2768496}, $I\frakm$ is weakly polymatroidal. Since $I$ is squarefree, $(I\frakm)_*=I\wedge \frakm$. Now, apply Lemma \ref{sqf-part}.
\end{proof}

\begin{definition}
    [{cf.~\cite[Theorem 2.5]{MR2845598}}]
    \begin{enumerate}[a]
        \item For two nonempty subsets $F,G$ of $[n]$, we say $F\succ_{lex} G$ if $\prod_{i\in F}x_i \succ_{lex} \prod_{i \in G}x_i$.
        \item Let $\Delta$ be simplicial complex with facets $\calF(\Delta)=\Set{F_1,\dots,F_s}$. $\Delta$ is called \Index{weakly co-polymatroidal} if for each pair of facets $F$ and $G$ with $F\succ_{lex} G$ and $i$ the smallest integer in $G\setminus F$, there exists some integer $j>i$ such that $j\notin G$ and $(G\setminus\Set{i})\cup \Set{j}\in \Delta$. If after an reorder of numbers in $[n]$, $\Delta$ becomes weakly co-polymatroidal, we will say that $\Delta$ is \Index{essential weakly co-polymatroidal}.
    \end{enumerate}
\end{definition}

It is observed in \cite[Theorem 2.5]{MR2845598} that $\Delta$ is weakly co-polymatroidal if and only if it is the Eagon complex of some weakly polymatroidal squarefree monomial ideal. Applying Proposition \ref{WP-sqf}, we get the following result.

\begin{corollary}
    All facet skeletons of weakly co-polymatroidal simplicial complexes are again weakly co-polymatroidal.
\end{corollary}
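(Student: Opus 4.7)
The plan is to mimic exactly the strategy used for Corollary \ref{facet-skeletons}, translating the combinatorial statement about $\Delta$ into an ideal-theoretic statement about its Stanley--Reisner-type dual ideal, applying Proposition \ref{WP-sqf}, and then translating back.

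First, by the characterization in \cite[Theorem 2.5]{MR2845598} quoted in the paragraph preceding the corollary, a simplicial complex $\Delta$ on $[n]$ is weakly co-polymatroidal if and only if it is the Eagon complex of a squarefree weakly polymatroidal monomial ideal $I$, that is, $I = I_{\Delta^\vee}$ under the identification \eqref{dual-ideal}. So starting with $\Delta$ weakly co-polymatroidal, I would fix the associated squarefree weakly polymatroidal ideal $I$ with Eagon complex $\Delta$.

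Next, by Proposition \ref{WP-sqf}, the ideal $I \wedge \frakm$ is weakly polymatroidal. It is moreover squarefree: every generator of $I \wedge \frakm$ has the form $u \wedge x_j$ with $u \in G(I)$ squarefree and $j \notin \supp(u)$, so the product is again squarefree. By Observation \ref{dual-observation}(e), $I_{(\Delta^{[1]})^\vee} = I_{\Delta^\vee} \wedge \frakm = I \wedge \frakm$, so $\Delta^{[1]}$ is the Eagon complex of a squarefree weakly polymatroidal ideal. Invoking \cite[Theorem 2.5]{MR2845598} in the other direction, $\Delta^{[1]}$ is weakly co-polymatroidal.

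Finally, an induction on $i$ gives the same conclusion for every iterated facet skeleton $\Delta^{[i]}$: the inductive hypothesis provides a squarefree weakly polymatroidal ideal whose Eagon complex is $\Delta^{[i-1]}$, and the previous paragraph applied to that ideal shows $\Delta^{[i]} = (\Delta^{[i-1]})^{[1]}$ is again weakly co-polymatroidal. There does not seem to be a real obstacle here; the only point that requires a moment of care is verifying that $I \wedge \frakm$ remains squarefree so that the dual translation via Observation \ref{dual-observation}(e) is legitimate, and this is immediate from the construction of the wedge product on squarefree input.
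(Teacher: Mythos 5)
Your proposal is correct and follows exactly the route the paper intends: translate via the Eagon complex correspondence of \cite[Theorem 2.5]{MR2845598}, apply Proposition \ref{WP-sqf}, use Observation \ref{dual-observation}(e), and iterate. The paper leaves this one-line deduction to the reader, and your write-up simply makes the implicit steps explicit, including the (correct) observation that $I \wedge \frakm$ stays squarefree.
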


For pure simplicial complexes, its facets skeletons coincide with those $\Delta^{(r,s)}$. Thus, the above Corollary is equivalent to saying that all the skeletons $\Delta^{(r,s)}$ of pure weakly co-polymatroidal complexes are again weakly co-polymatroidal. The purity requirement here is crucial, as can be seen from Example \ref{NotCptWP}.

Suppose that $I$ is weakly polymatroidal and $G(I)=\Set{u_1\succ_{lex} u_2 \succ_{lex} \cdots \succ_{lex} u_m}$. Mohammadi and Moradi \cite[Theorem 1.3]{MR2768496} demonstrated that this ideal has linear quotients with respect to the given order.  Although general weakly polymatroidal ideals are not componentwise weakly polymatroidal, ideals with linear quotients have componentwise linear quotients by \cite[Theorem 2.7]{MR2557882}. Their proof relies on the fact (see also \cite[Proposition 2.9]{MR2557882}) that the admissible order of each component can be chosen to be compatible with the admissible order of the original ideal and the multiplication by the graded maximal ideal $\frakm$. We will investigate the weakly polymatroidal property with respect to this assumption.

\begin{lemma}
    \label{comp-lex}
    Let $u$ and $v$ be two monomials such that $\deg(u)< \deg(v)$.  Then $u\succ_{lex} v$ if and only if all $ux_i\succ_{lex} v$ for $i\in [n]$.
\end{lemma}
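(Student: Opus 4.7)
The plan is to prove both directions by a direct analysis of the lexicographic order, using the elementary observation that the exponent vector of $ux_i$ differs from that of $u$ only at position $i$, where it is incremented by one. Let $\bda, \bdb \in \NN^n$ denote the exponent vectors of $u$ and $v$ throughout.

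For the forward implication, I would let $t$ be the smallest index with $\bda(t) > \bdb(t)$, so that $\bda(k) = \bdb(k)$ for $k < t$. Fixing $i \in [n]$, I split into two cases. If $i \geq t$, then $ux_i$ and $v$ still agree in the first $t-1$ coordinates, while at position $t$ the exponent of $ux_i$ is at least $\bda(t) > \bdb(t)$, so $ux_i \succ_{lex} v$. If $i < t$, then $ux_i$ and $v$ agree through position $i-1$, and at position $i$ one has $(ux_i)(i) = \bda(i)+1 > \bda(i) = \bdb(i)$, so again $ux_i \succ_{lex} v$.

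For the reverse implication I proceed by contradiction. Suppose $ux_i \succ_{lex} v$ for every $i$, but $u \not\succ_{lex} v$. Since the degree hypothesis forces $u \neq v$, this gives $v \succ_{lex} u$; let $t$ be the smallest index where $\bdb(t) > \bda(t)$. Applying the hypothesis at $i = t$, the vector of $ux_t$ agrees with $\bda$ except at position $t$, where it equals $\bda(t)+1 \leq \bdb(t)$. If $\bdb(t) > \bda(t)+1$ then $v \succ_{lex} ux_t$, a contradiction. Otherwise $\bdb(t) = \bda(t)+1$ and the two vectors agree through position $t$. Since $ux_t \succ_{lex} v$ is strict, there must be a smallest index $s > t$ where they disagree, and it must satisfy $\bda(s) = (ux_t)(s) > \bdb(s)$. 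Now apply the hypothesis at $i = s$: the vector of $ux_s$ agrees with $\bda$ at position $t$ and earlier, so $(ux_s)(t) = \bda(t) < \bdb(t)$, yielding $v \succ_{lex} ux_s$ and the desired contradiction.

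The main obstacle is the delicate sub-case $\bdb(t) = \bda(t)+1$ in the reverse direction: a one-shot comparison at index $t$ does not yield a contradiction, and one is forced to locate a later position $s$ where the exponent of $u$ strictly exceeds that of $v$, then trigger the contradiction by testing $i = s$. The key subtle point is to use the strictness of $ux_t \succ_{lex} v$ (together with the fact that $ux_t$ and $v$ already coincide up through position $t$) to guarantee that such an $s$ exists.
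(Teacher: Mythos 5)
Your proof is correct, and the forward direction matches the paper's argument. The reverse direction, however, takes a genuinely different and more elaborate route than the paper's. You argue by contradiction: assuming $v \succ_{\lex} u$, you locate the first index $t$ with $\bdb(t)>\bda(t)$, test the hypothesis at $i=t$, and in the delicate sub-case $\bdb(t)=\bda(t)+1$ you are forced to identify a second index $s>t$ and test the hypothesis again at $i=s$. The paper instead proves the reverse implication directly by invoking the hypothesis at the single index $i=n$: since $x_n$ is the smallest variable, the exponent vectors of $u$ and $ux_n$ coincide in every position $k<n$, so if the first discrepancy between $ux_n$ and $v$ occurs at some $t<n$ it is inherited verbatim by $u$, while if it occurs at $t=n$ one immediately deduces $\bda(n)\ge\bdb(n)$ and hence $\deg(u)\ge\deg(v)$, contradicting the degree hypothesis. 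The paper's choice of $i=n$ exploits the structure of the lex order in a way that collapses the case analysis to a single comparison; your two-stage contradiction argument is sound but does more work than necessary.
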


\begin{proof}
    Suppose that $u\succ_{lex} v$. By definition, there exists some $t\in [n]$ such that $\deg_{x_t}(u)>\deg_{x_t}(v)$ and $\deg_{x_k}(u)=\deg_{x_k}(v)$ for all $1\le k<t$. Now, take arbitrary $i\in[n]$. If $i\le t$, then $\deg_{x_i}(x_iu)=\deg_{x_i}(u)+1>\deg_{x_i}(v)$, and $\deg_{x_k}(x_iu)=\deg_{x_k}(u)=\deg_{x_k}(v)$ for all $1\le k<i$. Thus, $ux_i\succ_{lex}v$. If instead $i>t$, then $\deg_{x_t}(x_iu)=\deg_{x_t}(u)>\deg_{x_t}(v)$ and $\deg_{x_k}(x_iu)=\deg_{x_k}(v)$ for $1\le k<t$. Thus, again, $ux_i\succ_{lex}v$.

    Suppose that $ux_i\succ_{lex}v$ for all $i\in [n]$. In particular, $ux_n\succ_{lex}v$. Thus, there exists some $t\in[n]$ such that $\deg_{x_t}(ux_n)>\deg_{x_t}(v)$ and $\deg_{x_k}(ux_n)=\deg_{x_{k}}(v)$ for all $1\le k<t$. If this $t<n$, then $\deg_{x_t}(u)=\deg_{x_t}(ux_n)>\deg_{x_t}(v)$, and $\deg_{x_k}(u)=\deg_{x_k}(ux_n)=\deg_{x_k}(v)$ for all $1\le k <t$. Thus, $u\succ_{lex}(v)$. If instead $t=n$, then $\deg(u)+1=\deg(ux_n)>\deg(v)$, i.e., $\deg(u)\ge \deg(v)$. But this contradicts our assumption.
\end{proof}

\begin{corollary}
    \label{equiv-lex}
    Let $I$ be a monomial ideal in $S$. The following two conditions are equivalent:
    \begin{enumerate}[a]
        \item For each component $I_a$, elements of $G(\frakm I_{a-1})$ form the initial part of $G(I_{a})$ lexicographically.
        \item\label{equiv-lex-b} For each pair of monomials $u,v\in G(I)$, if $\deg(u)<\deg(v)$, then $u\succ_{lex} v$.
    \end{enumerate}
\end{corollary}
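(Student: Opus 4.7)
The plan is to decode condition (a) as a family of pairwise lex comparisons among elements of $G(I)$, and then to strip off the extra variables by iterating Lemma \ref{comp-lex}. First I would make the generating sets explicit. Since both $I_a$ and $\frakm I_{a-1}$ are equigenerated in degree $a$, the minimal generating set $G(I_a)$ consists of all degree-$a$ monomials in $I$, while $G(\frakm I_{a-1})$ consists of those degree-$a$ monomials in $I$ which admit a proper monomial divisor already in $I$. Consequently, the set difference $G(I_a) \setminus G(\frakm I_{a-1})$ equals $G(I) \cap S_a$, the set of minimal generators of $I$ of degree exactly $a$.

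Condition (a) then becomes the assertion that for every $a$, every $w \in G(\frakm I_{a-1})$ is strictly lex-greater than every $w' \in G(I) \cap S_a$. Next I would factor any such $w$ as $w = u \cdot m$ with $u \in G(I)$ of some degree $b < a$ and $m$ a monomial of degree $a - b \ge 1$; this factorization is available because every element of $I_{a-1}$ is divisible by some minimal generator of $I$ of degree at most $a-1$. So condition (a) becomes: for every pair $u, w' \in G(I)$ with $\deg(u) < \deg(w') = a$ and every monomial $m$ of degree $\deg(w') - \deg(u)$, one has $u \cdot m \succ_{lex} w'$.

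The crux is to apply Lemma \ref{comp-lex} iteratively in order to peel off the variables of $m$ one at a time. At each intermediate stage the partial product $u \cdot x_{i_1} \cdots x_{i_k}$ has degree $\deg(u) + k$, which stays strictly less than $\deg(w')$ as long as $k < \deg(w') - \deg(u)$, so the hypothesis of the lemma is satisfied at every stage. Induction on $\deg(w') - \deg(u)$ then yields the equivalence: $u \cdot m \succ_{lex} w'$ for every monomial $m$ of degree $\deg(w') - \deg(u)$ if and only if $u \succ_{lex} w'$. Quantifying over $a$ produces exactly condition (b). I do not foresee any serious obstacle; the only care needed is to verify that the degree hypothesis of Lemma \ref{comp-lex} is satisfied at every step of the induction, which it is because the partial product always has strictly smaller degree than $w'$ until the final peel.
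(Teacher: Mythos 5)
Your argument is correct and takes the approach the paper intends: the paper states this result as an immediate corollary of Lemma \ref{comp-lex} without writing out a proof, and your reformulation of condition (a) as a family of lex comparisons $u\cdot m\succ_{\lex}w'$ followed by the iterated application of Lemma \ref{comp-lex} to strip the variables of $m$ one at a time is exactly the natural way to extract (b) from it.
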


\begin{proposition}
    \label{eqv-cpwp}
    Consider the following three conditions for monomial ideal $I$.
    \begin{enumerate}[a]
        \item\label{cond1} $I$ is componentwise weakly polymatroidal.
        \item\label{cond2} For each component $I_a$, elements of $G(\frakm I_{a-1})$ form the initial part of $G(I_{a})$ lexicographically.
        \item\label{cond3} $I$ is weakly polymatroidal.
    \end{enumerate}
    Then the conditions \ref{cond1} and \ref{cond2} together will imply the condition \ref{cond3}.
\end{proposition}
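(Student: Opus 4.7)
The plan is to take arbitrary distinct $u, v \in G(I)$ with $u \succ_{lex} v$, let $t$ be the smallest index with $\deg_{x_t}(u) > \deg_{x_t}(v)$, and write $a = \deg(u)$, $b = \deg(v)$; the goal will be to produce $j > t$ with $x_t(v / x_j) \in I$. A first observation is that condition \ref{cond2}, invoked through Corollary \ref{equiv-lex}, forces $a \le b$, since otherwise that corollary would give $v \succ_{lex} u$, contradicting $u \succ_{lex} v$. A second preliminary is that any $w \in G(I)$ with $\deg(w) = d$ lies in $G(I_d)$, because $I_d$ contains no elements of degree less than $d$; hence $u \in G(I_a)$ and $v \in G(I_b)$.

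When $a = b$, both $u$ and $v$ live in $G(I_a)$, and I will apply condition \ref{cond1}, the weakly polymatroidal property of $I_a$, to obtain the required $j$ immediately.

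The main case will be $a < b$, where I will build the auxiliary monomial $u^\ast := u \cdot x_t^{b-a}$ and check three things: (i) $u^\ast \in G(I_b)$, since $u^\ast \in I_b$ already has degree $b$ and every element of $I_b$ has degree at least $b$; (ii) $u^\ast$ agrees with $v$ on the coordinates $1, \ldots, t-1$, because $u^\ast$ and $u$ share those coordinates and $u, v$ agree there; and (iii) $\deg_{x_t}(u^\ast) = \deg_{x_t}(u) + (b - a) > \deg_{x_t}(v)$. In particular $u^\ast \ne v$, as otherwise $\deg_{x_t}(v) = \deg_{x_t}(u) + (b - a) > \deg_{x_t}(u)$ would contradict the choice of $t$. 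Thus $u^\ast \succ_{lex} v$ with leading index still $t$, and applying the weakly polymatroidal property of $I_b$ (from condition \ref{cond1}) to the pair $u^\ast, v \in G(I_b)$ will then yield $j > t$ with $x_t(v / x_j) \in I_b \subset I$, as required.

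The key conceptual step is the construction $u^\ast = u \cdot x_t^{b - a}$: piling the degree deficit onto the very variable $x_t$ that witnesses the lexicographic comparison preserves the leading-index relation against $v$ while promoting $u$ into the correct component $I_b$. Everything else is routine bookkeeping, and I do not expect any serious obstacle beyond the verifications sketched above.
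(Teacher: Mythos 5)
Your proof is correct and takes essentially the same approach as the paper: you reduce to the top component $I_b$ via Corollary~\ref{equiv-lex} and pad $u$ to degree $b$ before applying the weakly polymatroidal hypothesis of condition~\ref{cond1}. The only cosmetic difference is that you pad with $x_t^{b-a}$ whereas the paper uses $x_n^{b-a}$; either choice preserves $t$ as the leading index of disagreement with $v$, so both work.
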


\begin{proof}
    Let $u,v\in G(I)$ with $u\succ_{lex}v$. Say, $\deg(u)=a$ and $\deg(v)=b$. Then $u\in G(I_a)$ and $v\in G(I_b)$.  It follows from Corollary \ref{equiv-lex} that $a\le b$.  Take $u'=ux_n^{b-a}\succ_{lex} v$. If $j$ is the smallest index such that $\deg_{x_j}(u)>\deg_{x_j}(v)$, the same index works if we replace $u$ by $u'$. Notice that $u'\in G(I_b)$. As $I_{b}$ is weakly polymatroidal, there exists $l>j$ with $x_j(v/x_l)\in I_b \subset I$.  Thus, $I$ is weakly polymatroidal.
\end{proof}

\begin{example}
    Let $I=\braket{x_1^3,x_1^2x_2,x_1^2x_3,x_2^2x_3,x_2x_3^2,x_1x_3^3}\subset \KK[x_1,x_2,x_3]$. It is not difficult to check that $I$ is both weakly polymatroidal and componentwise weakly polymatroidal, but elements of $G(\frakm I_3)$ do not form the initial part of $G(I_4)$ lexicographically.  Thus, the condition \ref{cond2} in Proposition \ref{eqv-cpwp} is not a necessary condition for \ref{cond3}.
\end{example}

\begin{example}
    Let $I=\braket{x_2,x_3x_4}$. This ideal is weakly polymatroidal. It is easy to see that $G(\frakm I_{1})$ forms the initial part of $G(I_2)$ lexicographically. But $I_2$ is not weakly polymatroidal.
    Thus, the conditions \ref{cond1} and \ref{cond3} in Proposition \ref{eqv-cpwp} together will not necessarily imply the condition \ref{cond2}.
\end{example}

\begin{example}
    Let $I=\braket{x_1^2x_2,x_1x_2^2,x_3^2,x_2x_3,x_1x_3}$. Then $I$ is componentwise weakly polymatroidal, but not weakly polymatroidal.  Thus, condition \ref{cond1} in Proposition \ref{eqv-cpwp} is not sufficient for condition \ref{cond2}.
\end{example}

We have a support component version for Proposition \ref{eqv-cpwp}.

\begin{proposition}
    \label{scpwp}
    Let $I$ be a support-componentwise weakly polymatroidal ideal in $S$ satisfying the following two conditions:
    \begin{enumerate}[a]
        \item For each support component $I_{\braket{a}}$, elements of $G(\frakm \wedge I_{\braket{a-1}})$ are contained in $G(I_{\braket{a}})$.
        \item For each pair of monomials $u,v\in G(I)$, if $\suppdeg(u)<\suppdeg(v)$, then $u\succ_{lex} v$.
    \end{enumerate}
     Then $I$ is weakly polymatroidal.
\end{proposition}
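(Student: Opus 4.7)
The plan is to follow the template of Proposition~\ref{eqv-cpwp}, adapting from ordinary degree to support-degree. Take $u,v\in G(I)$ with $u\succ_{lex}v$ and set $a=\suppdeg(u)$, $b=\suppdeg(v)$. Applying the contrapositive of hypothesis~(b) with the roles of $u$ and $v$ interchanged forces $a\le b$. Let $t$ be the smallest index with $\deg_{x_t}(u)>\deg_{x_t}(v)$; then $t\in\supp(u)$, and $\supp(u)\cap[1,t-1]=\supp(v)\cap[1,t-1]$.

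Next I would build a companion $u'\in G(I_{\braket{b}})$ that retains the first $t$ coordinates of $u$. A short counting argument, using $t\in\supp(u)$ and $|\supp(v)|=b\le n$, produces $b-a$ distinct indices $i_1>i_2>\cdots>i_{b-a}$ in $[n]\setminus\supp(u)$, all strictly greater than $t$ (for instance the $b-a$ largest elements of $\{t+1,\dots,n\}\setminus\supp(u)$). Set
\[
u':=u\cdot x_{i_1}x_{i_2}\cdots x_{i_{b-a}}.
\]
Then $\suppdeg(u')=b$, and $u'$ agrees with $u$ in every coordinate $\le t$; hence $u'\succ_{lex}v$, and $t$ is still the smallest index where $u'$ strictly exceeds $v$.

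The key claim is $u'\in G(I_{\braket{b}})$. I would argue this by induction on $k$, showing $u\cdot x_{i_1}\cdots x_{i_k}\in G(I_{\braket{a+k}})$ at each stage: the monomial lies in $\frakm\wedge I_{\braket{a+k-1}}$, and hypothesis~(a) places every minimal generator of that ideal inside $G(I_{\braket{a+k}})$. Once $u'\in G(I_{\braket{b}})$ is secured, the support-componentwise weakly polymatroidal property of $I_{\braket{b}}$, applied to the pair $u'\succ_{lex}v$, yields $l>t$ with $x_t(v/x_l)\in I_{\braket{b}}\subseteq I$, proving $I$ is weakly polymatroidal.

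The hard part will be the minimality check inside the induction, since hypothesis~(a) only transports \emph{minimal} generators of $\frakm\wedge I_{\braket{a+k-1}}$ into $G(I_{\braket{a+k}})$. In Proposition~\ref{eqv-cpwp} one got this for free: a proper divisor $u''x_j\in\frakm I_{a+k-1}$ of $u\cdot x_{i_1}\cdots x_{i_k}$ with $u''\in G(I_{a+k-1})$ and $j\ne i_k$ would have strictly smaller total degree, which is impossible. In the support-degree world total degrees need not coincide, so such a divisor can in principle arise. To exclude it I plan to exploit the choice of the $i_k$ as the largest available indices outside $\supp(u)$ (mirroring the use of $x_n^{b-a}$ in Proposition~\ref{eqv-cpwp}) together with hypothesis~(b) and the weakly polymatroidal property of the intermediate components $I_{\braket{a+k-1}}$, showing that any obstructing $u''x_j$ would force either a generator violating the lex comparison guaranteed by (b), or a minimal generator strictly dividing $u$, contradicting $u\in G(I_{\braket{a}})$.
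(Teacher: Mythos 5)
Your construction is the same one the paper uses: multiply $u$ by a squarefree monomial $u''$ of degree $b-a$ supported outside $\supp(u)$ and in indices strictly greater than $t$, so that $u' = uu''$ still beats $v$ at position $t$, assert $u'\in G(I_{\braket b})$, and then invoke the weak polymatroidal property of $I_{\braket b}$ on the pair $(u',v)$. There is a small deviation in the choice of $u''$: you take the $b-a$ largest indices in $\{t+1,\dots,n\}\setminus\supp(u)$ (mimicking the $x_n^{b-a}$ trick of Proposition~\ref{eqv-cpwp}), while the paper takes $\supp(u'')\subset\supp(v)\setminus\supp(u)$ (which one checks automatically sits inside $\{t+1,\dots,n\}$). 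Both choices are available, since $|\supp(v)\setminus\supp(u)|\ge b-a$ and $|\{t+1,\dots,n\}\setminus\supp(u)|\ge b-a$, and both deliver the lex comparison $u'\succ_{\lex}v$ with $t$ as the pivot index. So structurally you have reproduced the paper's argument.

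The problem is that you have not actually proved the step you yourself flag as the hard one: that $u'\in G(I_{\braket b})$. You are right that this is not free in the support-degree setting. In Proposition~\ref{eqv-cpwp} the analogous claim $u'=ux_n^{b-a}\in G(I_b)$ is automatic because any monomial of degree $b$ dividing $u'$ \emph{equals} $u'$; but a monomial of support-degree $b$ dividing $u'$ need only share its support, and can have strictly smaller exponents on $\supp(u)$. Hypothesis~(a) only moves \emph{minimal} generators of $\frakm\wedge I_{\braket{a+k-1}}$ into $G(I_{\braket{a+k}})$, so the naive induction $u\cdot x_{i_1}\cdots x_{i_k}\in G(I_{\braket{a+k}})$ requires a minimality check at every stage, and you only sketch a strategy for this ("I plan to exploit\dots showing that any obstructing $u''x_j$ would force\dots"). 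That is a program, not a proof. Until that claim is established, the argument does not go through; you cannot apply the weak polymatroidal property of $I_{\braket b}$ to a pair unless both monomials are honest members of $G(I_{\braket b})$. (For what it is worth, the paper's own proof asserts $u'\in G(I_{\braket b})$ in one line without justification, so you have correctly located the genuine subtlety — but locating it is not the same as closing it, and your proposal as written leaves the central claim open.)
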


\begin{proof}
    By Lemma \ref{comp-lex}, it is clear that  elements of $G(\frakm \wedge I_{\braket{a-1}})$ form the initial part of $G(I_{\braket{a}})$ lexicographically.
    Let $u,v\in G(I)$ with $u\succ_{lex}v$. Say, $\suppdeg(u)=a$ and $\suppdeg(v)=b$. Then $a\le b$, $u\in G(I_{\braket{a}})$ and $v\in G(I_{\braket{b}})$.   Let $j$ be the smallest index such that $\deg_{x_j}(u)>\deg_{x_j}(v)$.

    If $a=b$, since $I_{\braket{a}}$ is weakly polymatroidal, there exists some $l> j$ such that $x_j(v/x_l)\in I_{\braket{a}}\subset I$.

    If $a< b$, we can find suitable monomial $u''\in \supp(v)\setminus \supp(u)$ such that $\suppdeg(u'')=\deg(u'')=b-a$ and $x_j\succ_{lex}u''$.  Take $u'=uu''$. Then $u'\in G(I_{\braket{b}})$ and $u'\succ_{lex} v$ such that $j$ is again the smallest index with $\deg_{x_j}(u')>\deg_{x_j}(v)$.  As $I_{\braket{b}}$ is weakly polymatroidal, there exists some integer $l>j$ with $x_j(v/x_l)\in I_{\braket{b}} \subset I$.

    Thus, $I$ is weakly polymatroidal.
\end{proof}

\subsection{Vertex decomposable complexes}

Bj\"orner and Wachs {\cite[Definition 11.1]{MR1401765} introduced the notion of vertex decomposability for nonpure simplicial complexes as follows.

\begin{definition}
    \label{VertexDecomposable}
    A simplicial complex $\Delta$ is \Index{vertex decomposable} if it is a simplex or $\Set{\emptyset}$, or there exists a vertex $v$, called \Index{shedding vertex}, such that
    \begin{enumerate}[a]
        \item $\Delta\setminus v$ and $\link_{\Delta}(v)$ are vertex decomposable, and
        \item \label{ptb-df} no facet of $\link_{\Delta}(v)$ is a facet of $\Delta\setminus v$.
    \end{enumerate}
\end{definition}

This definition generalized the original one by Provan and Billera \cite{MR0593648} for the pure complexes.

As for Question \ref{Skeleton}(c), Woodroofe \cite[Lemma 3.10]{MR2853065} showed that all skeletons $\Delta^{(0,s)}$ of vertex decomposable simplicial complexes are again vertex decomposable. This result was also proved for pure vertex decomposable simplicial complex by Swanson \cite[Corollary 2.86]{MR2941472}.

The diagram in Figure \ref{dia-cpx} displays some properties of simplicial complexes and their relationships.  In this diagram, weakly co-stability implies vertex decomposability by\cite[Theorem 16]{MR1704158} under some additional mild condition; weakly co-polymatroidal property also implies vertex decomposability by \cite[Theorem 2.5]{MR2845598}; vertex decomposability implies shellability by \cite[Theorem 11.3]{MR1401765}. The remaining two implications follow from them.

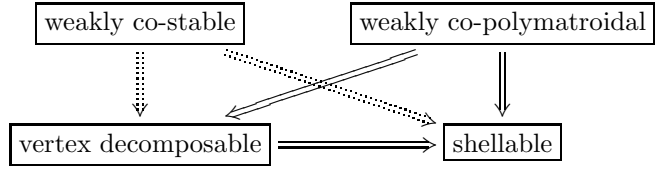
\begin{figure}
    \centerline{
        \xymatrix{
            \framebox{weakly co-stable} \ar@{:>}[rd]\ar@{:>}[d] & \framebox{weakly co-polymatroidal} \ar@{=>}[ld] \ar@{=>}[d] \\
            \framebox{vertex decomposable} \ar@{=>}[r] & \framebox{shellable}
        }
    }
    \caption{Relations among classes of simplicial complexes}
    \label{dia-cpx}
\end{figure}

On the other hand, we have the diagram in Figure \ref{dia-ideal}, displaying some properties of monomial ideals and their relationships. In this diagram, weakly $I$-stable ideals have linear quotients by Proposition \ref{WIS-LQ}; weakly polymatroidal ideals have linear quotients by \cite[Theorem 1.3]{MR2768496}.

\begin{figure}
    \centerline{
        \xymatrix{
            \framebox{weakly $I$-stable} \ar@{=>}[rd] \ar@{-->}[d] & \framebox{weakly polymatroidal} \ar@{=>}[d]\ar@{-->}[ld] \\
            \framebox[1cm]{?} \ar@{-->}[r] & \framebox{linear quotients}
        }
    }
    \caption{Relations among classes of monomial ideals}
    \label{dia-ideal}
\end{figure}
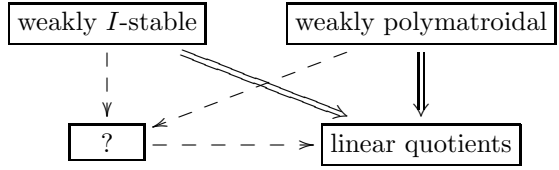

Therefore, we are interested in the missing class of monomial ideals in Figure \ref{dia-ideal}, which corresponds to the vertex decomposable complexes in Figure \ref{dia-cpx}. We name it as \Index{variable decomposable ideals}.

To start with, we look at its squarefree subclass as follows.
When $\Delta$ is vertex decomposable, we say $I_{\Delta^\vee}$ is \Index{variable decomposable}. Considering Observation \ref{dual-observation}, we know the class of variable decomposable squarefree monomial ideals can be defined recursively as follows.
\begin{enumerate}[a]
    \item Ideals generated by variables are variable decomposable.
    \item If there is a variable $x_v$, such that the ideals
        \[
            I_0:=\Braket{u: u\in G(I)\text{ and } x_v \text{ does not divide }u}
        \]
        and
        \[
            I_1:=\Braket{u/x_v : u\in G(I) \text{ and }x_v\text{ divides }u}
        \]
        are variable decomposable and $I_0\subset I_1 \frakm$, then $I$ is also variable decomposable. This variable $x_v$ shall be called a \Index{shedding variable} in this case.
\end{enumerate}

Notice that $I_{(\Delta\setminus v)^\vee}=\braket{x_v}\cap I_{\Delta^\vee}=x_v(I_1+I_0)$ and $I_{(\link_{\Delta}(v))^\vee}=x_vI_0$. The condition \ref{ptb-df} in Definition \ref{VertexDecomposable} is translated to the condition that $G(I_1+I_0)\cap G(I_0)=\emptyset$. As these ideals are squarefree, it is not difficult to see that the following conditions are equivalent:
\begin{enumerate}[a]
    \item $G(I_1+I_0)\cap G(I_0)=\emptyset$.
    \item $I_0\subset I_1\frakm$.
    \item $I_0\subset I_1\wedge \frakm$.
\end{enumerate}

We can generalize this approach to monomial ideals which are not necessarily squarefree.

\begin{definition}
    \label{VD}
    A monomial ideal $I\subset S=\KK[x_1,\dots,x_n]$ is \Index{variable decomposable} (resp.~strongly variable decomposable) if
    \begin{enumerate}[a]
        \item $I=\braket{0}$ or $\braket{1}$, or
        \item there is a variable $x_v$ with $r=\max\Set{\deg_{x_v}(u): u\in G(I)}$, such that all the ideals
            \[
                I_i:=\Braket{u/x_v^i: u\in G(I)\text{ and }\deg_{x_v}(u)=i}\subset \KK[x_1,\dots,\widehat{x_v},\dots,x_n], \quad i=0,1,\dots, r
            \]
            are variable decomposable (resp.~strongly variable decomposable), and for each $i=1,2,\dots,r$,
            $I_{i-1}\subset I_{i} \frakm$ (resp.~$I_{i-1}\subset I_i\wedge \frakm$).
            In this case, $x_v$ shall be called a \Index{shedding variable} as well.
    \end{enumerate}
\end{definition}

Thus, ideals generated by variables are strongly variable decomposable. Notice that
we have the implication
\[
    \text{strongly variable decomposable} \implies \text{variable decomposable}.
\]
And a squarefree monomial ideals is variable decomposable if and only if it is strongly variable decomposable.

\begin{theorem}
    Let $I\subset S$ be a monomial ideal and $\frakm$ the graded maximal ideal of $S$.
    \begin{enumerate}[a]
        \item If $I$ is variable decomposable, then $I\frakm$ is again variable decomposable.
        \item If $I$ is strongly variable decomposable, then $I\wedge \frakm$ is again strongly variable decomposable.
    \end{enumerate}
\end{theorem}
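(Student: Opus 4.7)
The plan is to induct on the number $n$ of variables in $S$, proving (a) and (b) simultaneously. The trivial ideals $\braket{0}$ and $\braket{1}$ are handled directly, since $\braket{1}\frakm=\braket{1}\wedge\frakm=\frakm$ is generated by the variables and hence strongly variable decomposable. If $I$ is nontrivial, fix a shedding variable $x_v$ for $I$, with maximal $x_v$-degree $r$ in $G(I)$ and corresponding components $I_0,\dots,I_r$ in $S'=\KK[x_1,\dots,\widehat{x_v},\dots,x_n]$; write $\frakm'=\braket{x_k:k\ne v}\subset S'$. The strategy is to show that the same variable $x_v$ remains a shedding variable for $J:=I\frakm$ (respectively $J:=I\wedge\frakm$), with components that descend to the already-known decomposition of $I$, so that the induction hypothesis applied inside $S'$ closes the argument.

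The key preliminary is to compute the $x_v$-components $J_i$ of $J$ directly. A case analysis on the generators $u\cdot x_j$ of $I\frakm$ (respectively $u\wedge x_j$ of $I\wedge\frakm$) according to whether $j=v$ yields, in the product case, $J_0=I_0\frakm'$, $J_i=I_i\frakm'+I_{i-1}$ for $1\le i\le r$, and $J_{r+1}=I_r$. In the wedge case one instead gets $J_0=I_0\wedge\frakm'$, $J_1=I_1\wedge\frakm'+I_0$, and $J_i=I_i\wedge\frakm'$ for $2\le i\le r$, with no component in degree $r+1$ because $u\wedge x_v=0$ whenever $v\in\supp(u)$. The shedding hypothesis $I_{i-1}\subset I_i\frakm$ (respectively $I_{i-1}\subset I_i\wedge\frakm$) for $I$, restricted to the $x_v$-free part, reads $I_{i-1}\subset I_i\frakm'$ (respectively $I_{i-1}\subset I_i\wedge\frakm'$); this absorbs the extraneous $I_{i-1}$ summands and simplifies each $J_i$ to $I_i\frakm'$ or $I_i\wedge\frakm'$. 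The induction hypothesis applied to $I_i$ inside the $(n-1)$-variable ring $S'$ then gives that each $J_i$ is (strongly) variable decomposable, with $J_{r+1}=I_r$ handled by the hypothesis on $I$ directly.

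It remains to verify the shedding containments $J_{i-1}\subset J_i\frakm'$ for (a) and $J_{i-1}\subset J_i\wedge\frakm'$ for (b). In the product case this is immediate: multiplying $I_{i-1}\subset I_i\frakm'$ by $\frakm'$ yields $J_{i-1}=I_{i-1}\frakm'\subset I_i(\frakm')^2=J_i\frakm'$, and the top containment $J_r=I_r\frakm'\subset I_r\frakm'=J_{r+1}\frakm'$ is trivial. The main subtlety lies in the wedge version: for a generator $ux_k$ of $I_{i-1}\wedge\frakm'$, with $u\in G(I_{i-1})$ and $k\notin\supp(u)\cup\{v\}$, the inclusion $u\in I_i\wedge\frakm'$ supplies a divisor $bx_j$ of $u$ with $b\in G(I_i)$ and $j\notin\supp(b)\cup\{v\}$. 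The crucial observation is that the condition $k\notin\supp(u)$ forces $k\notin\supp(b)$ and $k\ne j$, so $bx_jx_k$ is a legitimate generator of $I_i\wedge\frakm'\wedge\frakm'$ containing $ux_k$. This support-tracking argument is the only genuine obstacle; everything else is bookkeeping driven by the recursive definition of (strong) variable decomposability.
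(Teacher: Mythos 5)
Your proof is correct and follows essentially the same strategy as the paper: you show that the same shedding variable $x_v$ works for $I\frakm$ (resp.\ $I\wedge\frakm$), compute the $x_v$-graded components of the product, absorb the extra $I_{i-1}$ summands via the given shedding containments $I_{i-1}\subset I_i\frakm'$ (resp.\ $I_{i-1}\subset I_i\wedge\frakm'$), and close by induction on the number of variables. The paper only writes out the $I\frakm$ computation and declares the wedge case ``similar,'' so your explicit support-tracking verification that $J_{i-1}\subset J_i\wedge\frakm'$ in case (b) is a more careful elaboration of an argument the paper leaves implicit, but it is not a genuinely different route.
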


\begin{proof}
    We only prove the variable decomposable case. The strongly variable decomposable case is similar.

    Suppose that $I$ is variable decomposable. Without loss of generality, we assume that $I$ is neither $\braket{0}$ nor $\braket{1}$, and $x_n$ is a shedding variable for $I$.  Write $S'=\KK[x_1,\dots,x_{n-1}]$ and $\frakm'=\braket{x_1,\dots,x_{n-1}}$ its graded maximal ideal.  Notice that each $I_k$ in Definition \ref{VD} satisfies $I_k=(I_k\cap S')S$. Thus, the condition $I_{k-1}\subset I_k\frakm$ is equivalent to saying $I_{k-1}\subset I_k\frakm'$. Now
    \begin{align*}
        I\frakm=& \left(\sum_{k=0}^r x_n^k I_k\right)(\braket{x_n}+\frakm') \\
        = & \sum_{k=0}^r x_n^kI_k x_n +\sum_{k=0}^r x_n^k (I_k\frakm') \\
        = & \sum_{k=1}^{r+1} x_n^k I_{k-1} +\sum_{k=0}^r x_n^k (I_k\frakm') \\
        =& \left(\sum_{k=0}^r x_n^k (I_k\frakm')\right) + x_n^{r+1}I_r.
    \end{align*}
    For each $k=1,\dots,r$, we have $I_{k-1}\frakm' \subset I_k\frakm' \frakm$. For the $k=r+1$ case, we also have $I_r\frakm' \subset I_r \frakm$. Thus, an induction argument implies that $I\frakm$ is variable decomposable and $x_n$ is again a shedding variable.
\end{proof}

\begin{corollary}
    If $\Delta$ is a vertex decomposable simplicial complex, then its facet skeletons are again vertex decomposable.
\end{corollary}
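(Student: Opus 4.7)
The plan is to translate the statement about vertex decomposable complexes into a statement about squarefree variable decomposable ideals via the Eagon-complex correspondence, and then apply the preceding theorem.

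First, I would recall from the discussion preceding Definition \ref{VD} that by definition a squarefree monomial ideal is variable decomposable precisely when its Eagon complex is vertex decomposable; equivalently, $\Delta$ is vertex decomposable if and only if $I_{\Delta^\vee}$ is variable decomposable. Moreover, it was observed that for squarefree ideals the notions of variable decomposable and strongly variable decomposable coincide. Thus the hypothesis on $\Delta$ says that $I=I_{\Delta^\vee}$ is strongly variable decomposable.

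Second, I would invoke Observation \ref{dual-observation}(e), which identifies the Stanley--Reisner ideal of the Alexander dual of the $1$-facet skeleton:
\[
I_{(\Delta^{[1]})^\vee} = I_{\Delta^\vee} \wedge \frakm = I \wedge \frakm.
\]
By the theorem just proved, $I \wedge \frakm$ is again strongly variable decomposable whenever $I$ is. Since $I \wedge \frakm$ is again squarefree, we conclude that $\Delta^{[1]}$ is vertex decomposable.

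Finally, I would iterate: the $i$-facet skeleton is defined recursively as the $1$-facet skeleton of the $(i-1)$-facet skeleton, so an easy induction on $i$ (with the base case $i=1$ just handled and the inductive step being precisely the same argument applied to $\Delta^{[i-1]}$ in place of $\Delta$) finishes the proof. No real obstacle here; the content is entirely contained in Observation \ref{dual-observation}(e), the definitional equivalence between vertex decomposability of $\Delta$ and (strong) variable decomposability of $I_{\Delta^\vee}$, and the wedge-closure property established in the preceding theorem.
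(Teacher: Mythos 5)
Your proof is correct and follows exactly the route the paper's structure indicates: translate via Observation~\ref{dual-observation}(e), use that squarefree variable decomposability coincides with strong variable decomposability, apply part (b) of the preceding theorem to the wedge with $\frakm$, and iterate for the higher facet skeletons. This mirrors the pattern already used for Corollary~\ref{facet-skeletons} in the shellable case, and is precisely what the paper leaves unstated as an immediate consequence.
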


In the remaining part of this paper, we show that the class of variable decomposable ideals fills the gap in Figure \ref{dia-ideal}. We have observed that Eagon complexes of variable decomposable squarefree monomial ideals are precisely those vertex decomposable complexes.

The following result generalizes the fact that vertex decomposable simplicial complexes are shellable.

\begin{proposition}
    \label{VDLQ}
    Variable decomposable monomial ideals have linear quotients.
\end{proposition}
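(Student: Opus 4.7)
The plan is to argue by induction on the number of variables $n$ in the ambient ring $S = \KK[x_1,\dots,x_n]$. The trivial cases $I = \braket{0}$ and $I = \braket{1}$ have linear quotients by convention, so assume $I$ is neither and pick a shedding variable $x_v$ with $r = \max\Set{\deg_{x_v}(u) : u \in G(I)}$. Each ideal $I_i \subset S' := \KK[x_1,\dots,\widehat{x_v},\dots,x_n]$ lives in $n-1$ variables and is variable decomposable, so by the inductive hypothesis each admits an admissible order $u_{i,1},\dots,u_{i,m_i}$. I concatenate these into an order on $G(I)$ by listing the block $i=r$ first (as $x_v^r u_{r,1},\dots,x_v^r u_{r,m_r}$), then the block $i=r-1$, and so on down to $i=0$, using the induced admissible order within each block. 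Taking the blocks in the high-$i$-to-low-$i$ direction is essential for the colon computation below.

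I claim this ordering is admissible. Fix a generator $x_v^i u_{i,j}$ and compute the colon of the preceding generators into $x_v^i u_{i,j}$. The contribution from the same block $\Set{x_v^i u_{i,k} : k<j}$ equals $\braket{u_{i,1},\dots,u_{i,j-1}}:u_{i,j}$, since the common factor $x_v^i$ cancels; this is generated by variables because $u_{i,1},\dots,u_{i,m_i}$ is admissible for $I_i$. For any preceding generator $x_v^l u_{l,k}$ from a higher block ($l>i$), a direct gcd computation gives
\[
    \braket{x_v^l u_{l,k}} : x_v^i u_{i,j} = \Braket{x_v^{l-i}\cdot \frac{u_{l,k}}{\gcd(u_{l,k},u_{i,j})}},
\]
which is visibly contained in $\braket{x_v}$. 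Thus the full colon from preceding generators is contained in $\braket{x_v} + \bigl(\braket{u_{i,1},\dots,u_{i,j-1}}:u_{i,j}\bigr)$.

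The final step is to produce $x_v$ itself in the colon whenever $i<r$, and this is where the shedding condition enters. Since $u_{i,j}\in I_i \subset I_{i+1}\frakm$ and $u_{i,j}$, together with every generator of $I_{i+1}$, lies in $S'$, the containment forces $u_{i,j}\in I_{i+1}\frakm'$ with $\frakm' = \braket{x_1,\dots,\widehat{x_v},\dots,x_n}$; hence some $u_{i+1,k}\in G(I_{i+1})$ divides $u_{i,j}$. The preceding generator $x_v^{i+1}u_{i+1,k}$ then yields $\braket{x_v^{i+1}u_{i+1,k}}:x_v^i u_{i,j} = \braket{x_v}$ exactly. Combined with the same-block contribution, this shows the full colon is generated by variables, completing the induction. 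The main subtlety is selecting the block ordering and verifying that the weaker shedding condition $I_{i-1}\subset I_i\frakm$ (not the stronger $\wedge$ version) already suffices to recover $x_v$ as a colon generator; once this is in place, the rest is bookkeeping, and the argument applies uniformly to the strongly variable decomposable case as well.
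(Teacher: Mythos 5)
Your proof is correct and follows essentially the same strategy as the paper: induct on the number of variables, order $G(I)$ block-by-block from high $x_v$-degree down to low, use the inductive admissible order within each block, and use the shedding inclusion $I_i\subset I_{i+1}\frakm$ together with $u_{i,j}\in \KK[x_1,\dots,\widehat{x_v},\dots,x_n]$ to realize $x_v$ itself as a colon generator, forcing the cross-block colon to be exactly $\braket{x_v}$. The only cosmetic difference is that you verify the colon condition at an arbitrary generator $x_v^i u_{i,j}$, whereas the paper checks it only at the bottom block $u_{0,j}$ and treats the preceding blocks as handled by an implicit secondary induction.
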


\begin{proof}
    Suppose that $I\subset S=\KK[x_1,\dots,x_n]$ is weakly variable decomposable and $x_n$ is a shedding variable. By induction, we may assume that in the representation $I=\sum_{k=0}^r x_n^k I_k$ as in Definition \ref{VD}, each $I_k$ has linear quotients with respect to the order $u_{k,1},\dots,u_{k,t_k}$. We claim that
    \[
        x_n^r u_{r,1},\dots,x_n^r u_{r,t_r},x_{n}^{r-1}u_{r-1,1},\dots,x_n^{r-1}u_{r-1,t_{r-1}},\dots,u_{0,1},\dots,u_{0,t_0}
    \]
    is an admissible order for $I$ to have linear quotients.

    To verify this claim, it suffices to check the linear quotient condition for the monomial $u_{0,j}$.  Write
    \[
        I'=\braket{x_n^r u_{r,1},\dots,x_n^r u_{r,t_r},x_{n}^{r-1}u_{r-1,1},\dots,x_n^{r-1}u_{r-1,t_{r-1}},\dots,x_nu_{1,1},\dots,u_{1,t_1}}
    \]
    and
    \[
        I''=\braket{u_{0,1},\dots,u_{0,j-1}}.
    \]
    
    We first look at $I':u_{0,j}$.
     As $I'\subset \braket{x_n}$ and $u_{0,j}\notin \braket{x_n}$, we have $I':u_{0,j} \subset \braket{x_n}$. On the other hand, as $u_{0,j}\in I_0\subset I_1\frakm$, $u_{0,j}=u_{1,k}u'$ for some $k\le t_1$ and some monomial $u'\in S$. Now, $x_nu_{0,j}=(x_nu_{1,k})u'\in I'$, i.e., $x_n\in I':u_{0,j}$. This implies that $I':u_{0,j}=\braket{x_n}$. 
    
    Meanwhile, $I'':u_{0,j}$ is generated by variables by the assumption for $I_0$. Thus, $(I'+I''):u_{0,j}$ is generated by variables.
\end{proof}

The following result generalizes the fact in \cite[Theorem 2.5]{MR2845598} that weakly co-polymatroidal complexes are vertex decomposable.

\begin{proposition}
    \label{WPVD}
    Weakly polymatroidal ideals are variable decomposable.
\end{proposition}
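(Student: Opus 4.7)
The plan is to argue by induction on the number of variables $n$, taking the lex-largest variable $x_1$ as the candidate shedding variable. Given a weakly polymatroidal ideal $I\subset S=\KK[x_1,\dots,x_n]$, I set $r=\max\{\deg_{x_1}(u):u\in G(I)\}$ and, for $0\le k\le r$,
\[
I_k:=\Braket{u/x_1^k : u\in G(I),\ \deg_{x_1}(u)=k}\subset\KK[x_2,\dots,x_n].
\]
Once I verify (i) that each $I_k$ is again weakly polymatroidal in $\KK[x_2,\dots,x_n]$, and (ii) $I_{k-1}\subset I_k\frakm$ for every $1\le k\le r$, the induction hypothesis upgrades (i) to variable decomposability of each $I_k$, and together with (ii) this makes $x_1$ a shedding variable for $I$. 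The base case $n=0$ and the degenerate case $r=0$ (where $I$ already lives in $\KK[x_2,\dots,x_n]$) are immediate.

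The heart of the argument, and the step I expect to be the main obstacle, is the shedding condition (ii). Pick $u\in G(I_{k-1})$ and any $v\in G(I_k)$; then $x_1^{k-1}u,\ x_1^kv\in G(I)$ and $x_1^kv\succ_{lex}x_1^{k-1}u$ with first discrepancy at the $x_1$-coordinate, so the weakly polymatroidal property of $I$ produces some $j>1$ with $x_1^ku/x_j\in I$. A minimal generator $x_1^{k'}w'\in G(I)$, with $w'$ free of $x_1$ and $k'\le k$, then divides $x_1^ku/x_j$. The crux is to exclude $k'<k$: in that case $w'\mid u/x_j\mid u$, so $x_1^{k'}w'\mid x_1^{k-1}u$; minimality of $x_1^{k-1}u\in G(I)$ then forces $x_1^{k'}w'=x_1^{k-1}u$, hence $w'=u$, and so $u\mid u/x_j$---impossible. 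Therefore $k'=k$, $w'\in G(I_k)$, and $u\in I_k\frakm$. A byproduct worth noting is that the same reasoning shows $I_{k-1}\ne 0$ forces $I_k\ne 0$, so the nonzero $I_k$'s form an interval ending at $r$ and (ii) is never forced to become vacuously false.

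For condition (i), given $u\succ_{lex}v$ in $G(I_k)$ with first discrepancy at index $t\ge 2$, multiplying by $x_1^k$ transports the pair to $G(I)$ with the same $t$, so WP of $I$ yields $j>t$ with $x_1^kx_tv/x_j\in I$ together with a divisor $x_1^{k'}w'\in G(I)$, $k'\le k$. If $k'=k$ then $w'\in G(I_k)$ divides $x_tv/x_j$; if $k'<k$, iterating (ii) gives $I_{k'}\subset I_k\frakm^{k-k'}\subset I_k$, so again $x_tv/x_j\in I_k$. This closes the inductive step and establishes that $I$ is variable decomposable with shedding variable $x_1$.
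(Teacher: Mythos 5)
Your strategy is the same as the paper's: take $x_1$ as the shedding variable, write $I=\sum_{k=0}^r x_1^k I_k$, and verify that each slice $I_k$ is weakly polymatroidal and that $I_{k-1}\subset\frakm I_k$, then conclude by induction on the number of variables. Your arguments for condition (i) and for excluding $k'<k$ in the divisibility analysis are correct.

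There is, however, a gap in how you establish the shedding inclusion (ii). You begin by picking $v\in G(I_k)$, which tacitly assumes $I_k\neq 0$; if $I_{k-1}\neq 0$ while $I_k=0$ for some $1\le k\le r$, your argument cannot even get started, and (ii) would in fact be false. You acknowledge this and claim as a ``byproduct'' that the same reasoning shows $I_{k-1}\neq 0$ forces $I_k\neq 0$, but that reasoning still needs a $v\in G(I_k)$ to feed into the WP condition, so as written the patch is circular. The clean fix, and it is precisely what the paper does, is to compare $u\in G(I_{k-1})$ with an element of $G(I_r)$ rather than $G(I_k)$: the set $G(I_r)$ is nonempty by the choice of $r$, and since $\deg_{x_1}(x_1^r u'')>\deg_{x_1}(x_1^{k-1}u)$ the first lex discrepancy is still at $x_1$, so WP again yields $j>1$ with $x_1^k u/x_j\in I$. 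Your minimality argument then rules out $k'<k$ exactly as you wrote, and this simultaneously produces $u\in\frakm I_k$ and proves $I_k\neq 0$. With that one substitution your proof coincides with the paper's.
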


\begin{proof}
    Let $I$ be a weakly polymatroidal ideal in $S=\KK[x_1,\dots,x_n]$.  We will show that $x_1$ is a shedding variable for $I$ to be variable decomposable.
Let $x_v=x_1$ in Definition \ref{VD} and write $I=\sum_{k=0}^r x_1^k I_k$.  We may assume that $r>0$ and $I_0$ is nonzero.

    Take arbitrary monomials $u'\in G(I_k)$ with $0\le k \le r-1$ and $u''\in G(I_{r})$.  As $x_1^k u'$ and $x_1^{r}u''$ belong to $G(I)$ and $x_1^k u'\prec_{lex}x_1^{r}u''$, we can find suitable $j>1$ such that $f=x_1(x_1^ku'/x_j)\in I$. Since $\deg_{x_1}(f)=k+1$, this $f\in \sum_{i=0}^{k+1}x_1^i I_i$. If $f\in x_1^iI_i$ with $i\le k$, $f/x_1^{k+1}=u'/x_j\in I_i$, contradicting to the fact that $x_1^ku' \in G(I)$. Thus, indeed, $f\in x_1^{k+1}I_{k+1}$. Now, $u'/x_j\in I_{k+1}$ and therefore $u' \in \frakm I_{k+1}$. By the arbitrariness of $u'$, we obtain that
    \[
        I_k\subset \frakm I_{k+1}.
    \]
    From this relation and the assumption that $I_0\ne 0$, we also know that all $I_k\ne 0$ for $k=0,1,\dots,r$.

    We claim that each $I_k$ is weakly polymatroidal. Take two $u'$ and $u''$ in $G(I_k)$. Suppose that $u'\prec_{lex}u''$, and $t$ is the smallest index such that $\deg_{x_t}(u')< \deg_{x_t}(u'')$. Then
    $x_1^ku'\prec_{lex}x_1^ku''$, and $t$ is the smallest index such that $\deg_{x_t}(x_1^ku')< \deg_{x_t}(x_1^ku'')$.  As $I$ is weakly polymatroidal and $x_1^ku',x_1^ku''\in G(I)$, we have suitable $j>t$ such that $g=x_t(x_1^ku'/x_j)\in I$. Since $\deg_{x_1}(g)=k$, we have $g\in \sum_{i=0}^k x_1^i I_i$.  Suppose that $g\in x_1^i I_i$ for some $i\le k$. Then $x_tu'/x_j\in I_i\subset \frakm^{k-i}I_k\subset I_k$. Therefore, $I_k$ is weakly polymatroidal. Now, by induction, $I_k$ is variable decomposable.
\end{proof}

In the following, we discuss weakly $I$-stable ideals. Here, $I$ is a monomial ideal generated by monomials of the form $x_i^{a_i}$. For convenience, we set $a_i=+\infty$ if $x_i^d \notin I$ for all $d\in \NN$.

The following result generalizes the fact in \cite[Theorem 16]{MR1704158} that every weakly co-stable complexes are vertex decomposable. Notice that in their proof, it is implicitly assumed that \emph{when $F$ is a facet of $\link_{\Delta^*}(1)$ and $F\ne [n]\setminus \Set{1}$, the cardinality of the set $[n]\setminus F$ is at least $2$}. When this assumption is not satisfied, the complex $\braket{\Set{3},\Set{1,2}}$, which corresponds to the squarefree weakly stable ideal $J=\braket{x_1x_2,x_3}$, provides a counterexample to \cite[Theorem 16]{MR1704158}. 

We rephrase the above assumption as follows.

\begin{definition}
Let $J$ be a monomial ideal in $S=\KK[x_1,\dots,x_n]$. For each nonempty subset $F\subset [n]$ and vector $\bda\in \NN^n$ with $\supp(\bda)\subset F$, let $t=\max(F)$ and
\[
J_{F,\bda}=\Braket{u/\bdx^{\bda}: u\in G(J) \text{ and }\deg_{x_j}(u)=\bda(j) \text{ for all }j\le t}.
\]
We say $J$ is \Index{sequentially pure} if for each $F$ and $\bda$ above, whenever $x_j\in G(J_{F,\bda})$ for some $j$ (which is necessarily greater than $t$), then
\begin{enumerate}[1]
    \item $J_{F,\bda}$ is linear, and
    \item either $J_{F,\bda+\bde_t}=\braket{1}$ or $J_{F,\bda+k\bde_t}=\braket{0}$ for all integer $k\ge 1$.
\end{enumerate}
Here $\bde_t$ in the $t^{\th}$ standard basis vector.
\end{definition}

Obviously, ideals generated by monomials in one degree are sequentially pure.

\begin{proposition}
    Let $J$ be a weakly $I$-stable ideal in $S=\KK[x_1,\dots,x_n]$.  If $J$ is sequentially pure, then $J$ is variable decomposable.
\end{proposition}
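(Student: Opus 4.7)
The plan is to prove that $x_1$ is a shedding variable for $J$ and proceed by induction on the number of variables $n$. Let $r=\max\{\deg_{x_1}(u):u\in G(J)\}$ and decompose $J=\sum_{k=0}^{r}x_1^k J_k$ as in Definition~\ref{VD}, with each $J_k\subset S'=\KK[x_2,\dots,x_n]$. Setting $I'=I\cap S'$, which is still an irreducible monomial ideal, the first task is to check that each $J_k$ is again weakly $I'$-stable and sequentially pure. The weakly stable property follows by lifting any $w\in G(J_k)$ to $u=x_1^k w\in G(J)$ and applying (WIS); sequential purity transfers through the identification $(J_k)_{F',\bda'}=J_{F'\cup\{1\},\,k\bde_1+\bda'}$ for $F'\subseteq\{2,\dots,n\}$. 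By the induction hypothesis, each $J_k$ is therefore variable decomposable.

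The substantive step is the inclusion $J_{k-1}\subset J_k\frakm$ for $k=1,\dots,r$. Given $v\in G(J_{k-1})$, write $v=u/x_1^{k-1}$ with $u\in G(J)$ and $\deg_{x_1}(u)=k-1$, and split into three cases according to the shape of $v$. In the \emph{generic case} where $v$ is neither $1$ nor a single variable, one checks that $\max(u')\geq 2$, so (WIS) at $j=1$ furnishes some $i>1$ in $\supp(u)$ with $ux_1/x_i\in J+I$. Because $k\leq r\leq a_1-1$ and $u\notin I$, a short case check on the pure powers generating $I$ rules out membership in $I$, so $ux_1/x_i\in J$. The crucial observation is that any $u''\in G(J)$ dividing $ux_1/x_i$ must satisfy $\deg_{x_1}(u'')=k$: if instead $u''=x_1^b w'$ with $b<k$, then $w'\mid v/x_i\mid v$ would force $u''\mid u$, which by minimality of $u$ would give $u''=u$ and hence $x_i\mid x_1$, impossible since $i>1$. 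Consequently $v/x_i\in J_k$, so $v\in J_k\frakm$. The \emph{trivial case} $v=1$, i.e.\ $u=x_1^{k-1}$, forces every other generator to have strictly smaller $x_1$-degree, so $r=k-1$ and this case does not actually arise under the assumption $k\leq r$.

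The \emph{delicate case} is $v=x_j$ with $j\geq 2$, where (WIS) gives no information since $\max(u')\leq 1$. Here I invoke sequential purity with $F=\{1\}$ and $\bda=(k-1)\bde_1$: the element $x_j=u/x_1^{k-1}$ is a minimal generator of $J_{F,\bda}$ (because $x_1^{k-1}\notin G(J)$, otherwise $u=x_1^{k-1}x_j$ would not be minimal). Sequential purity then forces either $J_{F,k\bde_1}=J_k=\braket{1}$, in which case $v=x_j\in\frakm\subset J_k\frakm$, or $J_{F,(k-1+k')\bde_1}=J_{k-1+k'}=\braket{0}$ for every $k'\geq 1$, which would force $r<k$, contradicting $k\leq r$. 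Either alternative yields $v\in J_k\frakm$.

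The main obstacle is precisely this delicate case: it is the configuration overlooked in \cite{MR1704158}, and the sequentially pure hypothesis was tailored exactly to handle it. Once this case is settled, the other two are quick applications of weak $I$-stability and the minimality of generators, and the inductive translation to the components $J_k$ is routine.
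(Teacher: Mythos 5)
Your proof is correct and follows essentially the same route as the paper's: you take $x_1$ as the shedding variable, decompose $J=\sum_k x_1^k J_k$, use (WIS) at $j=1$ together with a minimality-of-generators argument to get $J_{k-1}\subset\frakm J_k$ when the generator has degree at least $2$, and invoke sequential purity exactly where (WIS) is vacuous, namely when the generator of $J_{k-1}$ is a single variable. The case split, the exclusion of membership in $I$ via $r<a_1$, and the inductive passage to the components $J_k$ all match the paper; your write-up just spells out the minimality step and the transfer of sequential purity to $J_k$ in a bit more detail.
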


\begin{proof}
   We will show that $x_1$ is a shedding variable for $J$ to be variable decomposable. We also write $S'=\KK[x_2,x_3,\dots,x_n]$.
    Let $x_v=x_1$ in Definition \ref{VD} and write $J=\sum_{k=0}^r x_1^k J_k$ correspondingly.  We may assume that $r>0$ and $J_0$ is nonzero. Obviously $r<a_1$.

    Take arbitrary monomial $u\in G(J_k)$ with $0\le k \le r-1$. As $x_1^k u\in G(I)$ with $k<r$, this $u\ne 1$.  If $\deg(u)=1$, then $k=r-1$, $J_{r-1}$ is linear and $J_r=\braket{1}$. There is not much to show in this case.  Therefore, we may assume that $\deg(u)>1$. Now, for each $j<\max(u')=\max( (x_1^ku)')$, we have some $i>j$ such that $x_j(x_1^ku/x_i)\in J+I$.

    When $j=1$, this means that $x_1^{k+1}u/x_i\in J+I$. If $x_1^{k+1}u/x_i\in I$, this simply implies that $k+1=a_1$. But as $k<r<a_1$, this is impossible. Thus, $x_1^{k+1}u/x_i\in J$. By an argument similar to the proof for Proposition \ref{WPVD}, we can again deduce that $u/x_i\in J_{k+1}$. Therefore,
    \[
        J_k\subset \frakm J_{k+1}.
    \]
    From this relation and the assumption that $J_0\ne 0$, we also know that all $J_k\ne 0$ for $k=0,1,\dots,r$.

    When $j>1$, a similar argument implies that $x_ju/x_i\in I+J_k$. This implies that $J_k\cap S'$ is weakly $(I\cap S')$-stable.  We also observe that $J_k$ is sequentially pure.  Therefore, by induction, $J_k$ is variable decomposable.
\end{proof}

\begin{bibdiv}
\begin{biblist}

\bib{MR1618728}{article}{
      author={Aramova, Annetta},
      author={Herzog, J{\"u}rgen},
      author={Hibi, Takayuki},
       title={Weakly stable ideals},
        date={1997},
        ISSN={0030-6126},
     journal={Osaka J. Math.},
      volume={34},
       pages={745\ndash 755},
         url={http://projecteuclid.org/getRecord?id=euclid.ojm/1200787779},
      review={\MR{1618728 (99b:13016)}},
}

\bib{MR1218500}{article}{
      author={Bigatti, Anna~Maria},
       title={Upper bounds for the {B}etti numbers of a given {H}ilbert
  function},
        date={1993},
        ISSN={0092-7872},
     journal={Comm. Algebra},
      volume={21},
       pages={2317\ndash 2334},
      review={\MR{MR1218500 (94c:13014)}},
}

\bib{MR1333388}{article}{
      author={Bj{\"o}rner, Anders},
      author={Wachs, Michelle~L.},
       title={Shellable nonpure complexes and posets. {I}},
        date={1996},
        ISSN={0002-9947},
     journal={Trans. Amer. Math. Soc.},
      volume={348},
       pages={1299\ndash 1327},
}

\bib{MR1401765}{article}{
      author={Bj{\"o}rner, Anders},
      author={Wachs, Michelle~L.},
       title={Shellable nonpure complexes and posets. {II}},
        date={1997},
        ISSN={0002-9947},
     journal={Trans. Amer. Math. Soc.},
      volume={349},
       pages={3945\ndash 3975},
      review={\MR{1401765 (98b:06008)}},
}

\bib{MR1995137}{article}{
      author={Conca, Aldo},
      author={Herzog, J{\"u}rgen},
       title={Castelnuovo-{M}umford regularity of products of ideals},
        date={2003},
        ISSN={0010-0757},
     journal={Collect. Math.},
      volume={54},
       pages={137\ndash 152},
}

\bib{MR1037391}{article}{
      author={Eliahou, Shalom},
      author={Kervaire, Michel},
       title={Minimal resolutions of some monomial ideals},
        date={1990},
        ISSN={0021-8693},
     journal={J. Algebra},
      volume={129},
       pages={1\ndash 25},
      review={\MR{1037391 (91b:13019)}},
}

\bib{MR2724673}{book}{
      author={Herzog, J{\"u}rgen},
      author={Hibi, Takayuki},
       title={Monomial ideals},
      series={Graduate Texts in Mathematics},
   publisher={Springer-Verlag London Ltd.},
     address={London},
        date={2011},
      volume={260},
        ISBN={978-0-85729-105-9},
      review={\MR{2724673}},
}

\bib{MR2083448}{article}{
      author={Herzog, J{\"u}rgen},
      author={Hibi, Takayuki},
      author={Zheng, Xinxian},
       title={Dirac's theorem on chordal graphs and {A}lexander duality},
        date={2004},
        ISSN={0195-6698},
     journal={European J. Combin.},
      volume={25},
       pages={949\ndash 960},
}

\bib{MR2267659}{article}{
      author={Herzog, J{\"u}rgen},
      author={Popescu, Dorin},
       title={Finite filtrations of modules and shellable multicomplexes},
        date={2006},
        ISSN={0025-2611},
     journal={Manuscripta Math.},
      volume={121},
       pages={385\ndash 410},
      review={\MR{2267659 (2007k:13022)}},
}

\bib{MR1704158}{article}{
      author={Herzog, J.},
      author={Reiner, V.},
      author={Welker, V.},
       title={Componentwise linear ideals and {G}olod rings},
        date={1999},
        ISSN={0026-2285},
     journal={Michigan Math. J.},
      volume={46},
       pages={211\ndash 223},
         url={http://dx.doi.org/10.1307/mmj/1030132406},
      review={\MR{1704158 (2000d:13032)}},
}

\bib{MR1918513}{article}{
      author={Herzog, J{\"u}rgen},
      author={Takayama, Yukihide},
       title={Resolutions by mapping cones},
        date={2002},
        ISSN={1512-0139},
     journal={Homology Homotopy Appl.},
      volume={4},
       pages={277\ndash 294},
        note={The Roos Festschrift volume, 2},
      review={\MR{1918513 (2003k:13014)}},
}

\bib{MR2260118}{article}{
      author={Kokubo, Masako},
      author={Hibi, Takayuki},
       title={Weakly polymatroidal ideals},
        date={2006},
        ISSN={1005-3867},
     journal={Algebra Colloq.},
      volume={13},
       pages={711\ndash 720},
      review={\MR{2260118 (2007i:13014)}},
}

\bib{MR1779598}{article}{
      author={Miller, Ezra},
       title={The {A}lexander duality functors and local duality with monomial
  support},
        date={2000},
        ISSN={0021-8693},
     journal={J. Algebra},
      volume={231},
       pages={180\ndash 234},
      review={\MR{MR1779598 (2001k:13028)}},
}

\bib{MR2768496}{article}{
      author={Mohammadi, Fatemeh},
      author={Moradi, Somayeh},
       title={Weakly polymatroidal ideals with applications to vertex cover
  ideals},
        date={2010},
        ISSN={0030-6126},
     journal={Osaka J. Math.},
      volume={47},
       pages={627\ndash 636},
         url={http://projecteuclid.org/getRecord?id=euclid.ojm/1285334469},
      review={\MR{2768496 (2012a:13036)}},
}

\bib{MR2845598}{article}{
      author={Mohammadi, Fatemeh},
       title={Powers of the vertex cover ideal of a chordal graph},
        date={2011},
        ISSN={0092-7872},
     journal={Comm. Algebra},
      volume={39},
       pages={3753\ndash 3764},
         url={http://dx.doi.org/10.1080/00927872.2010.512582},
      review={\MR{2845598 (2012h:13035)}},
}

\bib{MR2110098}{book}{
      author={Miller, Ezra},
      author={Sturmfels, Bernd},
       title={Combinatorial commutative algebra},
      series={Graduate Texts in Mathematics},
   publisher={Springer-Verlag},
     address={New York},
        date={2005},
      volume={227},
        ISBN={0-387-22356-8},
}

\bib{MR0593648}{article}{
      author={Provan, J.~Scott},
      author={Billera, Louis~J.},
       title={Decompositions of simplicial complexes related to diameters of
  convex polyhedra},
        date={1980},
        ISSN={0364-765X},
     journal={Math. Oper. Res.},
      volume={5},
       pages={576\ndash 594},
      review={\MR{593648 (82c:52010)}},
}

\bib{MR2338716}{article}{
      author={Popescu, Dorin},
       title={Criteria for shellable multicomplexes},
        date={2006},
        ISSN={1224-1784},
     journal={An. \c Stiin\c t. Univ. ``Ovidius'' Constan\c ta Ser. Mat.},
      volume={14},
       pages={73\ndash 84},
      review={\MR{2338716 (2008i:13015)}},
}

\bib{MR2561856}{article}{
      author={Sabzrou, Hossein},
       title={Sequential {C}ohen-{M}acaulayness versus componentwise
  support-linearity},
        date={2009},
        ISSN={0092-7872},
     journal={Comm. Algebra},
      volume={37},
       pages={3456\ndash 3462},
         url={http://dx.doi.org/10.1080/00927870802502894},
      review={\MR{2561856 (2011a:13027)}},
}

\bib{MR1921814}{article}{
      author={Shakin, D.~A.},
       title={{$I$}-stable ideals},
        date={2002},
        ISSN={0373-2436},
     journal={Izv. Ross. Akad. Nauk Ser. Mat.},
      volume={66},
       pages={197\ndash 224},
         url={http://dx.doi.org/10.1070/IM2002v066n03ABEH000392},
      review={\MR{1921814 (2003g:13024)}},
}

\bib{MR2557882}{article}{
      author={Soleyman~Jahan, Ali},
      author={Zheng, Xinxian},
       title={Ideals with linear quotients},
        date={2010},
        ISSN={0097-3165},
     journal={J. Combin. Theory Ser. A},
      volume={117},
       pages={104\ndash 110},
      review={\MR{2557882 (2011d:13030)}},
}

\bib{MR1453579}{book}{
      author={Stanley, Richard~P.},
       title={Combinatorics and commutative algebra},
     edition={Second},
      series={Progress in Mathematics},
   publisher={Birkh\"auser Boston Inc.},
     address={Boston, MA},
        date={1996},
      volume={41},
        ISBN={0-8176-3836-9},
}

\bib{MR2941472}{book}{
      author={Swanson, Rebecca I.~L.},
       title={Relationships between shellability, vertex decomposability, and
  h-vectors of simplicial complexes},
   publisher={ProQuest LLC, Ann Arbor, MI},
        date={2010},
        ISBN={978-1124-07524-2},
  url={http://gateway.proquest.com/openurl?url_ver=Z39.88-2004&rft_val_fmt=info:ofi/fmt:kev:mtx:dissertation&res_dat=xri:pqdiss&rft_dat=xri:pqdiss:3409795},
        note={Thesis (Ph.D.)--Indiana University},
      review={\MR{2941472}},
}

\bib{MR2853065}{article}{
      author={Woodroofe, Russ},
       title={Chordal and sequentially {C}ohen-{M}acaulay clutters},
        date={2011},
        ISSN={1077-8926},
     journal={Electron. J. Combin.},
      volume={18},
       pages={Paper 208, 20},
      review={\MR{2853065}},
}

\end{biblist}
\end{bibdiv}
\end{document}